\newcommand{\BH}{{\mathbb {H}}}
\newcommand{\BQ}{{\mathbb {Q}}}
\newcommand{\CA}{{\mathcal {A}}}
\newcommand{\Aut}{{\mathrm{Aut}}}
\newcommand{\GL}{{\mathrm{GL}}}
\newcommand{\Hom}{{\mathrm{Hom}}}
\newcommand{\Ind}{{\mathrm{Ind}}}
\newcommand{\Lie}{{\mathrm{Lie}}}
\newcommand{\rank}{{\mathrm{rank}}}
\renewcommand{\Re}{{\mathrm{Re}}}
\newcommand{\Span}{{\mathrm{Span}}}
\newcommand{\con}{\textit{C}}
\newcommand{\od}{\operatorname{d}}
\newcommand{\oH}{\operatorname{H}}
\newcommand{\oU}{\operatorname{U}}
\newcommand{\oZ}{\operatorname{Z}}
\newcommand{\g}{\mathfrak g}
\renewcommand{\k}{\mathfrak k}
\newcommand{\h}{\mathfrak h}
\newcommand{\p}{\mathfrak p}
\newcommand{\q}{\mathfrak q}
\renewcommand{\a}{\mathfrak a}
\renewcommand{\c}{\mathfrak c}
\newcommand{\n}{\mathfrak n}
\renewcommand{\u}{\mathfrak u}
\newcommand{\e}{\mathfrak e}
\newcommand{\f}{\mathfrak f}
\renewcommand{\l}{\mathfrak l}
\renewcommand{\t}{\mathfrak t}
\newcommand{\s}{\mathfrak s}
\renewcommand{\o}{\mathfrak o}
\newcommand{\m}{\mathfrak m}
\newcommand{\z}{\mathfrak z}
\renewcommand{\sl}{\mathfrak s \mathfrak l}
\newcommand{\Z}{\mathbb{Z}}
\newcommand{\C}{\mathbb{C}}
\newcommand{\R}{\mathbb R}
\newcommand{\A}{\mathbb{A}}
\newcommand{\abs}[1]{\lvert#1\rvert}
\newcommand{\la}{\langle}
\newcommand{\ra}{\rangle}
\newcommand{\be}{\begin {equation}}
\newcommand{\ee}{\end {equation}}
\newcommand{\bee}{\begin {equation*}}
\newcommand{\eee}{\end {equation*}}
\theoremstyle{Theorem}
\theoremstyle{Theorem}
\theoremstyle{Theorem}
\newtheorem{lem}{Lemma}[section]
\newtheorem{thml}[lem]{Theorem}
\newtheorem{prpl}[lem]{Proposition}
\theoremstyle{Theorem}
\newtheorem{prp}{Proposition}[section]
\newtheorem{lemp}[prp]{Lemma}
\theoremstyle{remark}
\newtheorem{remarkl}[lem]{Remark}
\newtheorem{remarksl}[lem]{Remarks}
\theoremstyle{remark}
\newtheorem{examplel}[lem]{Example}
\newtheorem{examplesl}[lem]{Examples}
\theoremstyle{Definition}
\newtheorem{dfnp}[prp]{Definition}
\newtheorem{dfnl}[lem]{Definition}
\begin{document}

\title{Low degree cohomologies of congruence groups}

\author{Jian-Shu Li}
 
\address{Department of Mathematics, Hong Kong University of Science and Technology,
Clear Water Bay, Kowloon, 
 Hong Kong}
  \email{matom@ust.hk}

\author{Binyong Sun}

\address{Academy of Mathematics and Systems Science, Chinese Academy of Sciences, and School of Mathematical Sciences, University of Chinese Academy of Sciences, Beijing, 100190, China} \email{sun@math.ac.cn}

\subjclass[2000]{22E41, 22E47} \keywords{Cohomology, congruence group, automorphic form}


\begin{abstract}
We prove the  vanishing of certain low degree cohomologies of some induced representations. As an application, we determine certain low degree  cohomologies of congruence  groups.
\end{abstract}

 \maketitle


\section{Introduction}\label{s1}

\subsection{Vanishing of low degree cohomologies of some induced representations}
Let $G$ be a real reductive group, namely, it is a Lie group with
the following properties:
\begin{enumerate}
       \item[$\bullet$]
        its Lie algebra $\g$ is reductive;
                   \item[$\bullet$]
  $G$ has only finitely many connected components;
         \item[$\bullet$]
            there is a connected closed subgroup of $G$ with
  finite center whose Lie algebra equals  $[\g,\g]$.
\end{enumerate}
Here and henceforth, unless otherwise specified, we use the corresponding lowercase Gothic letter to indicate the 
Lie algebra of a Lie group. We use a subscript $\C$ to indicate the complexification of a real vector space. For example, $\g_\C$ is the complexified Lie algebra of $G$.


Let $F$ be an  irreducible finite-dimensional representation of $G$.  Define $r_{G,F}$ to be the smallest integer, if it exists,  such that  the continuous group cohomology 
\[
 \oH^{r_{G,F}}_{\mathrm{ct}}(G; F\otimes \pi)\neq \{0\}
\]
for some  infinite-dimensional irreducible unitary representations $\pi$ of $G$. If such an integer does not exist, we set $r_{G,F}=\infty$.  
See \cite[Section 2]{HM} for the definition of continuous cohomologies.

Define
\[
  r_G:=\min\{r_{G,F}\, : \, F \textrm{ is an irreducible  finite-dimensional  representation of $G$}\}.
\]
The quantity $r_G$ is calculated in \cite[Table 1]{En},  \cite[Theorem 2, Theorem 3]{Ku}, \cite[Table 8.2]{VZ} and \cite[Section 2.B]{LS2}. See also Tables \ref{tableg}, \ref{tableg2} and \ref{tableg3} of Section \ref{sectable}. 

Let $K$ be a maximal compact subgroup of $G$. 

\begin{examplesl}
(a). If  the Lie algebra $\g$ has no non-compact simple ideal, then  $G$ has no  infinite-dimensional  irreducible unitary representation. Thus in this case
$r_{G,F}=\infty$ for all $F$, and  $r_G=\infty$.

(b). If all highest weights of $F$ are regular, then $r_{G, F}\geq q_0(G)$, where
\be\label{rgfq}
q_0(G):= \frac{\dim G/K-(\rank \g_\C-\rank \k_\C)}{2}, 
\ee
and ``$\rank$" stands for the rank of a reductive complex Lie algebra. See \cite[
Proposition 4.2 and Proposition 4.4]{LS}. 

(c) If $G=\GL_n(\R)$ ($n\geq 2$), then $r_G=n-1$.

\end{examplesl}

Let $P$ be a  parabolic subgroup of $G$, namely the normalizer in $G$ of a  parabolic subalgebra of  $\g$. Denote by $N$ its 
 unipotent radical and write $L:=P/N$ for its Levi quotient.
 
 
 \begin{dfnl}\label{domch}
A positive character $\nu: L\rightarrow \R^\times_+$ is said to be dominant if 
\[
  \la \nu|_{\h_\C}, \alpha^\vee\ra\geq 0\quad \textrm{for all }\alpha\in \Delta(\h_\C, \n_{\C}).
\]
Here the compleixfied differential of $\nu: L\rightarrow \R^\times_+$ is denoted by  $\nu: \l_\C\rightarrow \C$,  $\h$ is a maximally split Cartan subalgebra of $\l$, and a splitting $L\rightarrow P$ is fixed so that $\h$ is viewed as a Cartan subalgebra of $\g$.  
 \end{dfnl}

 Here and henceforth, $\Delta(\h_\C, \n_{\C})$  denotes the subset of the root system $\Delta(\h_\C, \g_{\C})$ consisting of the roots attached to $\n_\C$, and $\alpha^\vee$ denotes the coroot corresponding to the root $\alpha$.  Note that $ \la \nu|_{\h_\C}, \alpha^\vee\ra$ is  a real number as $\nu$ is a positive character of $L$.

\begin{remarkl}
A splitting $L\rightarrow P$ exists and is unique up to the conjugation by a unique element of $N$. This implies that Definition \ref{domch} is independent of the choices of $\h$ and  the splitting $L\rightarrow P$. 
\end{remarkl}

The following theorem is the main  local result of this paper.

\begin{thml}\label{mainlocal}
Suppose  that $P$ is a proper parabolic subgroup of $G$. Let $\nu: L\rightarrow \R^\times_+$ be a dominant positive character. Then for every   irreducible unitarizable Casselman-Wallach representation $\sigma$ of $L$, and every  irreducible finite-dimensional   representation $F$ of $G$,
\[
   \oH_{\mathrm{ct}}^i(G;  F\otimes \Ind_P^G (\nu\otimes\sigma))=\{0\}\qquad \textrm{for all }i<r_G.
\]
\end{thml}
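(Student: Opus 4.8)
The plan is to reduce the computation of $\oH_{\mathrm{ct}}^i(G; F\otimes \Ind_P^G(\nu\otimes\sigma))$ to cohomology of the Levi $L$ via a Frobenius-type reciprocity, and then to exploit the positivity of the dominant character $\nu$ to force the relevant $L$-cohomology to vanish in low degrees. First I would pass from continuous cohomology to relative Lie algebra cohomology: for $K$ a maximal compact subgroup, $\oH_{\mathrm{ct}}^\bullet(G; F\otimes V) \cong \oH^\bullet(\g, K; F\otimes V_{K\text{-fin}})$ for the Casselman--Wallach representation $V = \Ind_P^G(\nu\otimes\sigma)$ (the induction being normalized parabolic induction, which stays in the Casselman--Wallach category). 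Next, I would apply the Hochschild--Serre / Delorme-type spectral sequence associated to the parabolic $P = LN$, together with the identification of $(\n, \cdot)$-cohomology of a parabolically induced representation (Casselman's "geometric lemma" / Osborne–Warner, or the Hecht--Schmid style computation), to express the $E_2$-page in terms of $\oH^\bullet(\l, L\cap K; \oH^\bullet(\n; F)\otimes \nu\otimes\sigma)$, where $\oH^\bullet(\n; F)$ is computed by Kostant's theorem as a sum of finite-dimensional irreducible $L$-modules $F_w$ indexed by Weyl group elements $w \in W^P$, each appearing in $\n$-cohomological degree $\ell(w)$ and with a highest weight twisted by $w(\lambda+\rho) - \rho$.

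The crux is then a purely $L$-theoretic statement: for each Kostant piece $F_w$, I claim
\[
  \oH^j(\l, L\cap K;\, F_w \otimes \nu \otimes \sigma) = \{0\} \quad\text{for } j < r_L,
\]
and moreover that $r_L \geq r_G$ when $P$ is proper — or more precisely that the total degree $\ell(w) + j$ in the spectral sequence cannot drop below $r_G$. The input for the vanishing at the Levi level is the definition of $r_L$ together with the crucial point that $\nu\otimes\sigma$ is an \emph{infinite-dimensional} irreducible unitarizable representation of $L$ (since $\sigma$ is infinite-dimensional — if $\sigma$ were finite-dimensional one argues separately, but then $\Ind_P^G(\nu\otimes\sigma)$ has no invariant inner product unless $\nu$ is trivial, contradicting properness and dominance, or one handles it by a direct character/infinitesimal-character argument). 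Actually the cleaner route avoids unitarity of the induced representation entirely: one only needs that $F_w\otimes\nu\otimes\sigma$ has the right infinitesimal character and applies the Vogan--Zuckerman / Enright-type lower bounds that define $r_L$; the twist by the \emph{dominant} $\nu$ is what guarantees we land in the range where those bounds apply rather than in an exceptional "trivial-like" range.

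I would then assemble the spectral sequence argument: a class in $\oH^i(\g,K; F\otimes V)$ with $i < r_G$ would be detected on some $E_\infty^{p,q}$ with $p+q = i$; tracing back, $q \geq \ell(w)$-shifted cohomology of $\l$ in degree $< r_L$, which vanishes by the Levi-level statement unless the corresponding Kostant weight $w(\lambda+\rho)-\rho$ makes $F_w\otimes\nu\otimes\sigma$ "small" — and the dominance of $\nu$ plus properness of $P$ rules this out by a weight inequality $\langle \nu|_{\h_\C},\alpha^\vee\rangle \geq 0$ on $\Delta(\h_\C,\n_\C)$, which is exactly Definition~\ref{domch}. The numerical heart is checking that the combinatorial identity $r_G = \min_w(\ell(w) + r_{L,F_w})$ (or an inequality $\geq$ in the direction we need) holds; this is where I expect to consult Tables~\ref{tableg}, \ref{tableg2}, \ref{tableg3} and the case analysis of \cite{VZ}, \cite{En}, \cite{LS2}.

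\medskip
\noindent\emph{Main obstacle.} The hard part will be the last step: controlling, uniformly over all proper parabolics $P$ and all Kostant summands $F_w$, that $\ell(w) + r_{L,F_w} \geq r_G$. The length shift $\ell(w)$ is well understood, but $r_{L,F_w}$ depends delicately on the (possibly irregular, possibly singular after the $\rho$-shift) highest weight of $F_w$, and the crude bound $r_{L,F_w}\geq r_L$ is \emph{not} always enough — one may need the refined bound $r_{L,F}\geq q_0(L)$ valid for regular highest weights (Examples~(b)), combined with the observation that non-regularity of $w(\lambda+\rho)-\rho$ only occurs for larger $\ell(w)$, so the sum is still bounded below correctly. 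Making this trade-off precise, rather than invoking a single clean lemma, is the technical core; the dominance hypothesis on $\nu$ is precisely the device that keeps the relevant infinitesimal characters in the "generic" regime where the tabulated values of $r_L$ and $q_0(L)$ are the operative bounds.
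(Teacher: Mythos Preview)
Your framework is exactly the paper's: Shapiro's lemma plus the Hochschild--Serre spectral sequence with $E_2^{i,j} = \oH_{\mathrm{ct}}^i(L;\, \oH^j(\n_\C; F)\otimes\rho_P\otimes\nu\otimes\sigma)$, Kostant's theorem to identify $j$ as the number of roots in $\Delta(\h_\C,\n_\C)$ on which the Harish-Chandra parameter $\lambda$ is positive, and Vogan--Zuckerman (Lemma~\ref{lem54}) to bound $i\geq r_{\m,\lambda|_{\c'_\C}}$ on the semisimple part $\m=[\l,\l]$. Everything then reduces to the numerical inequality $j + r_{\m,\lambda|_{\c'_\C}}\geq r_\g$ (Proposition~\ref{estimate0}), which you correctly flag as the main obstacle.

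Where the paper departs from your sketch is in how that inequality is proved. Rather than the regularity/$q_0(L)$ trade-off you propose, the paper establishes the uniform lower bound $j\geq\lceil\dim\n/2\rceil$ (Lemma~\ref{half}), using that the involution $\alpha\mapsto-\theta(\alpha)$ preserves $\Delta(\h_\C,\n_\C)$ together with regularity of $\lambda$ and the dominance of $\nu$ (which forces $\langle\lambda|_{\a_\C\oplus\a'_\C},\alpha^\vee\rangle\geq 0$ on $\Delta(\h_\C,\n_\C)$). A table check (Lemma~\ref{lhalf}) then shows $\lceil r'_\g/2\rceil\geq r_\g$ for every simple $\g$ except the split forms of type $A_n$, $D_n$, $E_6$, $E_7$; those four families are finished off by a short extra argument exploiting irreducibility of the nilradical on the dual side (Lemmas~\ref{half04} and~\ref{lhalf6}). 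This route is more uniform than the one you anticipate and never needs the fine structure of $r_{L,F_w}$ for individual Kostant pieces. Incidentally, your aside ``$r_L\geq r_G$ when $P$ is proper'' is false (e.g.\ $G=\GL_n(\R)$, $L=\GL_1\times\GL_{n-1}$ gives $r_L=n-2<n-1=r_G$), so the Kostant-length contribution $j$ is genuinely indispensable, as you do recognize in the next clause.
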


In this article, ``$\Ind$" always stands for the normalized smooth induction.  Recall that a representation of a real reductive group is called a Casselman-Wallach representation if it smooth, Fr\'echet, of moderate growth, and its Harish-Chandra module has finite length.  The reader is referred to \cite{Ca}, \cite[Chapter 11]{Wa2} or \cite{BK} for
details about Casselman-Wallach representations.

\begin{remarkl}\label{remark15}
In Theorem \ref{mainlocal}, if  we further assume  that all the highest weights of $F$ are regular, and $\sigma$ is tempered, then it is proved in \cite[Theorem 4.5]{LS} that 
\[
   \oH_{\mathrm{ct}}^i(G;  F\otimes \Ind_P^G (\nu\otimes\sigma))=\{0\}
   \]
for all $i<q_0(G)$ (see \eqref{rgfq}).


\end{remarkl}

Similar to $r_{G, F}$, we define $q_{G,F}$ to be the smallest integer, if it exists, such that  the continuous group cohomology 
\[
 \oH_{\mathrm{ct}}^{q_{G,F}}(G;  F\otimes \Ind_P^G (\nu\otimes\sigma))\neq \{0\}
\]
for some proper parabolic subgroups $P$ of $G$,  some unitarizable irreducible Casselman-Wallach representations $\sigma$ of $L$, and some dominant positive characters $\nu$ of $L$.
If no such integer exist, we set $q_{G,F}=\infty$. Define 
\[
  q_G:=\min\{q_{G,F}\, : \, F \textrm{ is a  finite-dimensional irreducible representations of $G$}\}.
\]
Theorem \ref{mainlocal} amounts to saying that 
\[
  r_G\leq q_G. 
\]

\subsection{Low degree cohomologies of congruenc groups}

Now we further assume that  $G=\mathsf G(\R)$, where $\mathsf G$  is a  connected reductive linear algebraic group defined over $\BQ$.  Let  $\Gamma$ be a congruence subgroup of $\mathsf G(\BQ)$, namely, $\Gamma$ has a finite index subgroup of the form  $\mathsf G(\BQ)\cap K_f$, where $K_f$ is an open compact subgroup of $\mathsf G(\A_f)$. Here $\A_f$ denotes the ring of finite adeles of $\BQ$, and $\mathsf G(\BQ)$ is viewed as a subgroup of $G$, as well as a subgroup of $\mathsf G(\A_f)$.    


Recall that $F$ is an irreducible finite dimensional representation of $G$. The group cohomology (also named the Eilenberg-MacLane cohomology)  $\oH^i(\Gamma;F)$ is of great interest in both geometry and arithmetic.  
If we viewed $\Gamma$ as a discrete group, then $\oH^i(\Gamma;F)$ equals the continuous cohomology $\oH_{\mathrm{ct}}^i(\Gamma;F)$.

Let $\mathsf A_\mathsf G$ denote the largest central split torus in $\mathsf G$, and write  $A_\mathsf G$ for the identity connected component of $\mathsf A_\mathsf G(\R)$. Then we have a decomposition
\be\label{decomg}
   G=A_\mathsf G \times  {^\circ \!G},
\ee
where 
\be\label{gr1}
 {^\circ \!G}:=\bigcap_{\chi\textrm{ is an algebraic character of $\mathsf G$ defined over $\BQ$}} \ker(\abs{\chi}: \mathsf G(\R)\rightarrow \R^\times_+).
\ee

Note that $\Gamma\subset  {^\circ \!G}$. By restriction of continuous cohomology, the embedding $F^\Gamma\hookrightarrow F$ induces a linear map
\be\label{maph}
  F^{\Gamma}\otimes \oH_{\mathrm{ct}}^i( {^\circ \!G} ; \C)= \oH_{\mathrm{ct}}^i( {^\circ \!G} ; F^{\Gamma})\rightarrow  \oH^i(\Gamma;F), \qquad(i\in \Z).
  \ee
Here and henceforth, a superscript group indicates the invariant vectors under the group action, and here we let both $\C$ and $F^{\Gamma}$ carry the trivial representation of ${^\circ \!G}$.

\begin{remarkl}
Note that $K\subset {^\circ \!G}$, and  
$$\oH_{\mathrm{ct}}^i( {^\circ\! G}; \C)=\Hom_{K}(\wedge^i ({^\circ \!\g_\C}/\k_\C), \C), $$
where the last $\C$ stands for the trivial representation of $K$. See \cite[Chapter II, Corollary 3.2]{BW} or \cite[Proposition 9.4.3]{Wa1}.

\end{remarkl}

At least when $F$ is the trivial representation, there are quite a few works which study the surjectivity and the injectivity of the map \eqref{maph}. See \cite{Ga, GH, Bo, Ya}, for examples. 
The following theorem heavily relies on the work of  Franke \cite[Sections 7.4 and 7.5]{Fr}. 

\begin{thml}\label{main0}
If $i\leq q_{G, F}$, then  the map \eqref{maph} is injective. If $i< \min \{r_{G,F}, q_{G, F}\}$, then  the map \eqref{maph} is a linear isomorphism. 

\end{thml}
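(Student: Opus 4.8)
The plan is to realize $\oH^i(\Gamma;F)$ via automorphic cohomology and then use Franke's decomposition together with the local vanishing result of Theorem~\ref{mainlocal}. First I would pass to the adelic picture: fix a neat open compact $K_f\subset\mathsf G(\A_f)$ so that $\mathsf G(\BQ)\cap K_f$ has finite index in (and for cohomology purposes may be taken equal to) $\Gamma$, and identify $\oH^i(\Gamma;F)$ with a $K_f$-isotypic piece of $\oH^i_{\mathrm{ct}}(\mathsf G(\BQ)\bs\mathsf G(\A);\,F\otimes \CA)$, where $\CA$ is a space of automorphic forms; equivalently, with $\Hom_K(\wedge^i(\g_\C/\k_\C), F^\vee\otimes \CA^{K_f})$ in $(\g,K)$-cohomology language (Borel--Wallach). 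The point of invoking Franke \cite[Sections 7.4 and 7.5]{Fr} is the direct sum decomposition of the space of automorphic forms along the cuspidal support: the part with trivial (full group) cuspidal support contributes the classes coming from $\oH^i_{\mathrm{ct}}({}^\circ G;\C)$ tensored with $F^\Gamma$ — this is the source of the map \eqref{maph} — while the Eisenstein part is built out of inductions $\Ind_P^G(\nu\otimes\sigma)$ from proper parabolics $P$, with $\sigma$ the cuspidal datum and $\nu$ a (real, since we are on ${}^\circ G$ or equivalently after twisting) character; after a suitable twist these $\nu$ may be arranged to be dominant, which is exactly the hypothesis of Theorem~\ref{mainlocal}.

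Granting that decomposition, the argument is a two-line $\Ext$/Hom chase. Write $\oH^i_{\mathrm{ct}}(G;F\otimes\CA)=\oH^i_{\mathrm{ct}}(G;F\otimes\CA_{\mathrm{triv}})\oplus\oH^i_{\mathrm{ct}}(G;F\otimes\CA_{\mathrm{Eis}})$, the first summand being the ``interior'' part. On the interior part, the square-integrable automorphic spectrum that can occur is a sum of the trivial representation (contributing $F^\Gamma\otimes\oH^i_{\mathrm{ct}}({}^\circ G;\C)$, i.e. the image of \eqref{maph}) together with infinite-dimensional irreducible unitary $\pi$'s; the latter contribute nothing in degrees $i<r_{G,F}$ by the very definition of $r_{G,F}$. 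On the Eisenstein part, each constituent is a subquotient of some $\Ind_P^G(\nu\otimes\sigma)$ with $P$ proper, $\sigma$ unitarizable irreducible Casselman--Wallach, and $\nu$ dominant, so Theorem~\ref{mainlocal} gives vanishing of $\oH^i_{\mathrm{ct}}(G;F\otimes\Ind_P^G(\nu\otimes\sigma))$ for $i<r_G\le r_{G,F}$ — hence for the subquotients as well, after the usual long exact sequence / spectral sequence argument — while the definition of $q_{G,F}$ gives that the Eisenstein part vanishes for $i\le q_{G,F}$ (that is the content of $q_{G,F}$: the smallest degree where some such induced cohomology is nonzero). Assembling: for $i\le q_{G,F}$ the map \eqref{maph} is injective because the Eisenstein obstruction to surjectivity onto its image and the failure of injectivity both live in the Eisenstein part, which vanishes; and for $i<\min\{r_{G,F},q_{G,F}\}$ it is an isomorphism, since additionally the infinite-dimensional square-integrable contributions to the interior part vanish.

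More precisely for injectivity: the kernel of \eqref{maph} is controlled by the failure of the inclusion $\CA_{\mathrm{triv}}\hookrightarrow\CA$ to split off the right piece, and this failure is measured by maps into cohomology of the Eisenstein part in degree $i$ (via the connecting homomorphisms of the short exact sequences relating $F^\Gamma$, the relevant subspace of $\CA$, and the quotient); since that cohomology is $\{0\}$ for $i\le q_{G,F}$, the map is injective. For surjectivity in degrees $i<\min\{r_{G,F},q_{G,F}\}$: any class in $\oH^i(\Gamma;F)$ comes from $\oH^i_{\mathrm{ct}}(G;F\otimes\CA)$, whose interior part in these degrees reduces to the trivial-representation contribution (all infinite-dimensional unitary pieces killed by $i<r_{G,F}$) and whose Eisenstein part vanishes ($i<q_{G,F}$), so the class lies in the image of \eqref{maph}.

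The main obstacle I expect is the careful bookkeeping at the Eisenstein step: Franke's decomposition is indexed by associate classes of parabolics and cuspidal data, and one must (i) check that the archimedean components of the Eisenstein constituents are genuinely subquotients of inductions $\Ind_P^G(\nu\otimes\sigma)$ of the shape demanded by Theorem~\ref{mainlocal} — in particular that, after restricting to ${}^\circ G$ and twisting by the appropriate power of $|\det|$ or central character, the inducing character $\nu$ becomes a \emph{dominant} positive character of $L$ rather than one with imaginary or anti-dominant part — and (ii) propagate the vanishing from the full induced module to each subquotient, which requires the long exact sequence in continuous cohomology plus the fact that the relevant cohomology functors have finite cohomological dimension so that the spectral sequence argument terminates. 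Once the inducing data are normalized correctly, Theorem~\ref{mainlocal} and the definitions of $r_{G,F}$, $q_{G,F}$ do all the real work, and the rest is the homological-algebra packaging sketched above.
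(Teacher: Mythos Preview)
Your overall strategy is correct and close to the paper's, but there is one confusion worth flagging: Theorem~\ref{mainlocal} plays no role in the proof of Theorem~\ref{main0}. The Eisenstein vanishing you need follows directly from the \emph{definition} of $q_{G,F}$, once you have established that the Eisenstein constituents are of the form $\Ind_P^G(\nu\otimes\sigma)$ with $\nu$ dominant and $\sigma$ unitarizable. Theorem~\ref{mainlocal} is the separate local statement $r_G\leq q_G$; its only use is to pass from Theorem~\ref{main0} to Theorem~\ref{main}. Invoking it here is harmless but redundant, and it obscures the logical structure.

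The paper's route differs from your sketch in how it organizes the Eisenstein side. Rather than a single direct-sum decomposition of $\CA$, it uses Franke's \emph{filtration} $\CA(\mathsf G)_{[\leq t]}$ (Theorem~\ref{frf}): the graded pieces for $t>0$ are direct sums of $\CA_{\mathsf P}^{\bar 2}(\mathsf G)_\lambda$ with $\mathsf P$ proper and $\Re(\lambda)\in\breve\a_{\mathsf P}^+$, so dominance of the inducing character is automatic. The bottom piece $\CA^{\bar 2}(\mathsf G)$ then \emph{does} split as a direct sum $\CA^2(\mathsf G)\oplus(\text{proper-parabolic pieces})$ via Theorem~\ref{thmbar2} and \eqref{eis4}. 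This two-step structure (filtration, then direct sum at the bottom) is what cleanly resolves the dominance obstacle you flagged. A technical point you did not mention: to match the definition of $q_{G,F}$ one also needs $\sigma$ unitarizable, and the paper handles this via a short argument (Lemma~\ref{vanisht}) involving the anisotropic central torus $\mathsf C_{\mathsf P}$ and the $\g$-reality of the relevant infinitesimal character.

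One minor correction to your injectivity paragraph: by definition the Eisenstein cohomology vanishes for $i<q_{G,F}$, not for $i\leq q_{G,F}$. Injectivity at $i=q_{G,F}$ still holds, since the relevant connecting map in the long exact sequence comes from degree $i-1<q_{G,F}$. The paper's argument sidesteps even this bookkeeping: the map $\oH^i_{\mathrm{ct}}({}^\circ G;F^\Gamma)\to\oH^i_{\mathrm{ct}}({}^\circ G;F\otimes\CA^2(\Gamma\backslash{}^\circ G))$ is injective in \emph{all} degrees because $\CA^2$ is completely reducible and $F^\Gamma=(F\otimes\CA^2)^{{}^\circ G}$ sits as a direct summand; the passage from $\CA^2$ to $\CA$ is then handled by the filtration (Proposition~\ref{val223gg}).
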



Combining Theorem \ref{mainlocal} and Theorem \ref{main0}, we get the following result.  

\begin{thml}\label{main}
If  $i< r_G$, then the map \eqref{maph} is a linear isomorphism.
\end{thml}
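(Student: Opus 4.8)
The plan is to deduce Theorem \ref{main} by combining the two main results that precede it, namely Theorem \ref{mainlocal} and Theorem \ref{main0}. The second sentence of Theorem \ref{main0} already asserts that the map \eqref{maph} is a linear isomorphism whenever $i<\min\{r_{G,F},\,q_{G,F}\}$. So the task reduces to showing that, under the hypothesis $i<r_G$ of Theorem \ref{main}, one in fact has $i<\min\{r_{G,F},\,q_{G,F}\}$ for \emph{every} irreducible finite-dimensional representation $F$ of $G$. Since $r_G=\min_F r_{G,F}$ by definition, the inequality $i<r_G$ immediately gives $i<r_{G,F}$ for all $F$. The only remaining point is the inequality $i<q_{G,F}$, and for this I would invoke Theorem \ref{mainlocal}.

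Concretely, I would first recall that Theorem \ref{mainlocal} says precisely that $\oH_{\mathrm{ct}}^j(G;F\otimes\Ind_P^G(\nu\otimes\sigma))=\{0\}$ for all $j<r_G$, for every proper parabolic $P$, every dominant positive character $\nu$ of $L$, and every irreducible unitarizable Casselman-Wallach $\sigma$ of $L$. By the definition of $q_{G,F}$ as the smallest degree in which such a cohomology group is nonzero over this family of data, Theorem \ref{mainlocal} forces $q_{G,F}\geq r_G$ (including the case $q_{G,F}=\infty$). Combined with the trivial bound $r_{G,F}\geq r_G$, this yields $\min\{r_{G,F},\,q_{G,F}\}\geq r_G$ for every $F$. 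Hence $i<r_G$ implies $i<\min\{r_{G,F},\,q_{G,F}\}$, and the second assertion of Theorem \ref{main0} applies to give that \eqref{maph} is a linear isomorphism. This completes the argument.

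There is essentially no obstacle here: the theorem is a formal consequence of the previously established results, amounting to the chain of inequalities $r_G\le q_G$ (the content of Theorem \ref{mainlocal}, as noted in the text) together with $r_G\le r_{G,F}$. If one wanted to be careful, the one place to pay attention is the bookkeeping of $\pm\infty$ and the quantifier ``for some'' in the definitions of $r_{G,F}$, $q_{G,F}$, $r_G$, $q_G$: one should check that the vanishing statement of Theorem \ref{mainlocal} genuinely covers the \emph{full} family of $(P,\nu,\sigma)$ appearing in the definition of $q_{G,F}$, which it does verbatim. Thus the proof is just the observation that $i<r_G\le r_{G,F}$ and $i<r_G\le q_{G,F}$, followed by citation of Theorem \ref{main0}.
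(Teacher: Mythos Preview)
Your proposal is correct and follows exactly the paper's approach: the paper simply states that Theorem~\ref{main} is obtained by combining Theorem~\ref{mainlocal} (which gives $r_G\le q_{G,F}$) with Theorem~\ref{main0}. Your write-up is just an explicit unpacking of that one-line deduction; the only minor redundancy is quantifying over all $F$ when $F$ is fixed in the statement, but this does no harm.
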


\begin{remarksl}
(a)  When all the highest weights of $F$ are regular, it is proved by Li-Schwermer in \cite[Corollary 5.6]{LS} that  $\oH^i(\Gamma;F)=\{0\}$ when 
$
i<q_0(G)$ (see \eqref{rgfq}).

(b) Assume  that $F$ is the trivial representation. It was proved by Borel in \cite[Theorem 7.5]{Bo} that the map \eqref{maph} is injective when $i\leq c(\mathsf G)$, where  $c(\mathsf G)$ is a certain constant attached to $\mathsf G$ (see \cite[Section 9]{Bo}).  It was later proved by Yang in \cite{Ya} that the  map \eqref{maph} is injective when $i\leq 2c(\mathsf G)+1$. Based on the works of Garland-Hsiang \cite{GH} and Garland \cite{Ga}, Borel also proved in \cite[Theorem 7.5]{Bo} that the map \eqref{maph} is a linear isomorphism when $i\leq \min\{c(\mathsf G), m(G)\}$, where $m(G)$ is a certain constant attached to the real group $G$. In the works of Borel and Yang, $\Gamma$ is allowed to be any arithmetic group.

\end{remarksl}

\begin{examplel}
Suppose $\mathsf G=\GL(n)/_{\BQ}$ ($n\geq 2$). Then $c(\mathsf G)=\lfloor \frac{ n}{2}\rfloor-1$ (see  \cite[Section 9.2]{Bo}) and $m(G)=\frac{n+2}{4}$ (see \cite[Theorem 4.1]{KN}). 
Suppose $n\geq 6$ so that $\frac{n+2}{4}\leq \lfloor \frac{ n}{2}\rfloor-1$. Borel's result implies that 
\[
 \oH^i(\Gamma;\C)= \oH_{\mathrm{ct}}^i( {^\circ \!G} ; \C)\quad
 \textrm{
 for all $i\leq\lfloor\frac{n+2}{4}\rfloor$}. 
\]
It was pointed out by Franke \cite[Section 7.5]{Fr} that it would be interesting to get an improvement of  Borel's bound for certain groups. Theorem  \ref{main} implies that 
 \[  
      \oH^i(\Gamma;F)=
    F^{\Gamma}\otimes \oH_{\mathrm{ct}}^i( {^\circ \!G} ; \C)\quad \textrm{
 for all $i\leq n-2$},
    \]
    and 
    for all irreducible finite dimensional representations $F$ of $G$.  
Thus Theorem  \ref{main}  does improve Borel's bound.
\end{examplel}

\section{Preliminaries on representations and their cohomologies}

\subsection{Continuous cohomologies}

Let $G$ be a locally compact Hausdorff topological group. By a representation  of $G$, we mean a quasi-complete  Hausdorff locally convex topological vector space $V$ over $\C$, together with a  continuous linear action of $G$ on it. 

Using the $G$-intertwining continuous  linear maps as morphisms, all representations of $G$ clearly form a category. Using the strong injective resolutions (or called continuously injective resolutions) in this category, one defines the continuous  cohomology group $\oH^i_{\mathrm{ct}}(G; V)$ for every representation $V$ of $G$. This is a locally convex topological vector space  over $\C$ which may or may not be Hausdorff. See \cite[Section 2]{HM} or \cite{Bl} for details.

\subsection{Smooth cohomologies}
Now we suppose that $G$ is a Lie group. We say that a representation $V$ of $G$ is smooth if 
 for every $X\in \g$, the map
  \be\label{actgv}
  V\rightarrow V, \quad v\mapsto X.v:=\frac{\od}{\od \! t}|_{t=0}\, \exp(tX).v
\ee
   is well-defined and continuous. When this is the case, by using \eqref{actgv}, $V$ is naturally a representation of $\g_\C$.

Using the $G$-intertwining continuous  linear maps as morphisms, all smooth representations of $G$ also form a category. Using the strong injective resolutions (or called differentiably injective resolutions) in this category, we define the smooth  cohomology group $\oH^i_{\mathrm{sm}}(G; V)$ for every smooth representation $V$ of $G$. This is also a locally convex topological vector space  over $\C$ which may or may not be Hausdorff. See \cite[Section 5]{HM} for details.

The first assertion of the following theorem  is proved in  \cite[Theorem 5.1]{HM}, and the second one follows from \cite[Theorem 6.1]{HM}.  
\begin{thml}\label{fr}
Let $V$ be a smooth representation of $G$. Then there is an identification
\[
  \oH^{i}_{\mathrm{ct}}(G; V)=\oH^{i}_{\mathrm{sm}}(G; V),\qquad (i\in \Z)
\]
of topological vector spaces. 
If $G$ has only finitely many connected components, then 
\[
  \oH^{i}_{\mathrm{sm}}(G; V)=\oH^{i}(\g_\C, K; V), \qquad (i\in \Z)
\]
as topological vector spaces, where $K$ is a maximal compact subgroup of $G$. 

\end{thml}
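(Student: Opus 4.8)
The plan is to treat the two identifications in the statement separately, following Hochschild--Mostow; in both arguments every comparison map, resolution homotopy and averaging projector that appears will be a continuous linear operator, so that the abstract isomorphisms produced are automatically isomorphisms of topological vector spaces. The first identification is \cite[Theorem 5.1]{HM} and holds for an arbitrary Lie group $G$; the second is \cite[Theorem 6.1]{HM} and uses that $G$ has finitely many components. Below I describe the shape of each argument.

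For the identification $\oH^i_{\mathrm{ct}}(G;V)=\oH^i_{\mathrm{sm}}(G;V)$, I would compute $\oH^\bullet_{\mathrm{ct}}$ from the continuous homogeneous standard complex and $\oH^\bullet_{\mathrm{sm}}$ from the smooth one. Both augmented complexes $0\to V\to C(G^{\bullet+1},V)$ and $0\to V\to C^\infty(G^{\bullet+1},V)$, with $G$ acting diagonally by left translations, are strong resolutions: the map that fixes the first argument of a homogeneous cochain to be the identity is a continuous contracting homotopy, and it preserves smoothness. Moreover each $C(G^{n+1},V)$ is continuously injective and each $C^\infty(G^{n+1},V)$ is differentiably injective, being co-induced modules; so, after passing to $G$-invariants, these complexes compute $\oH^\bullet_{\mathrm{ct}}$, respectively $\oH^\bullet_{\mathrm{sm}}$. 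It remains to compare the two. For this I would bring in a smoothing operator: fix $\{\phi_\varepsilon\}\subset C^\infty_c(G)$ with $\int_G\phi_\varepsilon=1$ and supports shrinking to $\{e\}$, and let it act on a homogeneous cochain by convolving separately in each variable,
\[
 (R_\varepsilon f)(g_0,\dots,g_n)=\int_{G^{n+1}} f(g_0h_0,\dots,g_nh_n)\,\phi_\varepsilon(h_0)\cdots\phi_\varepsilon(h_n)\;dh_0\cdots dh_n .
\]
One checks that $R_\varepsilon$ is $G$-equivariant, sends continuous cochains to smooth ones, and commutes with the coboundary (in the $i$-th face term the factor $\phi_\varepsilon(h_i)$ integrates to $1$). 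The crux is that, for each fixed $\varepsilon$, $R_\varepsilon$ is chain homotopic to the identity on the continuous complex via an explicit continuous ``prism'' homotopy assembled from partial convolutions, and that this homotopy again respects smoothness; granting this, $R_\varepsilon$ and the inclusion of the smooth complex are mutually inverse topological homotopy equivalences, and passing to $G$-invariants and cohomology gives the claim. Constructing this smoothness-preserving homotopy is the step I expect to be the main obstacle in this part.

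For the identification $\oH^i_{\mathrm{sm}}(G;V)=\oH^i(\g_\C,K;V)$, with $G$ now of finitely many components so that $G/K$ is diffeomorphic to a Euclidean space (hence connected and contractible), I would use the van Est double complex
\[
 C^{p,q}:=\Hom_K\!\big(\wedge^q(\g_\C/\k_\C),\; C^\infty(G^{p+1},V)\big),
\]
with the smooth standard coboundary in the $p$-direction and the Chevalley--Eilenberg differential in the $q$-direction, where $K$ acts on $C^\infty(G^{p+1},V)$ through the diagonal left-translation action of $G$ and on $\wedge^q(\g_\C/\k_\C)$ through the adjoint action. One spectral sequence: for fixed $p$ the $q$-complex computes $\oH^\bullet(\g_\C,K;C^\infty(G^{p+1},V))$, and since $C^\infty(G^{p+1},V)$ is, after the coordinate change putting the diagonal action on the first variable, a smoothly co-induced module, this complex is the de Rham complex of $G/K$ with coefficients in the associated system, whose cohomology—by the Poincar\'e lemma on the contractible $G/K$—is concentrated in degree $0$, where it is the space of inhomogeneous smooth $p$-cochains with the group coboundary; so this spectral sequence degenerates to $\oH^\bullet_{\mathrm{sm}}(G;V)$. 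The other spectral sequence: for fixed $q$, applying the exact functor $\Hom_K(\wedge^q(\g_\C/\k_\C),-)$ (exact, and preserving strong exactness, because $\wedge^q(\g_\C/\k_\C)$ is a finite-dimensional $K$-module and $K$-averaging is continuous) to the strong resolution $0\to V\to C^\infty(G^{\bullet+1},V)$ yields a strong resolution of $\Hom_K(\wedge^q(\g_\C/\k_\C),V)$, so the $p$-cohomology is concentrated in degree $0$, equal to $\Hom_K(\wedge^q(\g_\C/\k_\C),V)$, with surviving differential the Chevalley--Eilenberg one; so this spectral sequence degenerates to $\oH^\bullet(\g_\C,K;V)$. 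Comparing the two collapses gives $\oH^i_{\mathrm{sm}}(G;V)\cong\oH^i(\g_\C,K;V)$, and the identification is topological because all differentials, the Poincar\'e-lemma homotopy and the averaging projector are continuous. Here the main point is the Poincar\'e lemma with coefficients in the generally non-Banach coefficient space, which one gets from the fact that the contracting homotopy on the de Rham complex of $G/K\cong\R^{\dim G/K}$ is continuous, so that completing the tensor product with the coefficient space preserves exactness. One may also, if $K$ is disconnected, invoke that taking $K$-invariants commutes with cohomology since $K/K^0$ is finite and we work over $\C$.
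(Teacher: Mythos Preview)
Your proposal is correct and takes exactly the same approach as the paper: the paper simply cites \cite[Theorem 5.1]{HM} for the first identification and \cite[Theorem 6.1]{HM} for the second, which is precisely what you do before going on to sketch the Hochschild--Mostow arguments. Your additional detail is fine but unnecessary here, since the paper is content to invoke these results as black boxes.
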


Here $\oH^{i}(\g_\C, K; V)$ denotes the relative Lie algebra cohomology. See  \cite[(2.127)]{KV} for the explicit complex which  computes the relative Lie algebra cohomology.

\subsection{Quasi Casselman-Wallach representations}

Now suppose that $G$ is a real reductive group. Let $\oU(\g_\C)$ denote the universal enveloping algebra of $\g_\C$, whose center is denoted by $\oZ(\g_\C)$. \

\begin{dfnl}
A smooth representation $V$ of $G$ is called a quasi Casselman-Wallach representation if 
\begin{itemize}
  \item for every ideal $\mathfrak{J}$ of $\oU(\g_\C)^G$ of  finite co-dimension, the space 
     \be\label{vj0}
        V^{\mathfrak J}:=\{v\in V\, : \, X.v=0, \ \textrm{for all } X\in \mathfrak J\},
\ee
 which is $G$-stalbe,    is a Casselman-Wallach representation of $G$ under the  subspace topology;
  \item in the category of complex locally convex topological vector spaces, the natural map
  \[
    \varinjlim_{\mathfrak J} V^{ \mathfrak J}\rightarrow V
  \]
  is a topological linear isomorphism.
\end{itemize}

\end{dfnl}

We will be concerned with the continuous cohomologies of some quasi Casselman-Wallach representations.

\section{Preliminaries on automorphic forms}

\subsection{Spaces of automorphic forms}\label{sautof}

We now return to the notation of the Introduction. In particular, $\mathsf G$ is a connected reductive linear algebraic group over $\BQ$, and $G=\mathsf G(\R)$.

Recall that a subgroup $\Gamma$ of $\mathsf G(\BQ)$ is called an arithmetic subgroup if it is commensurable to the subgroup  $\eta^{-1}(\GL_k(\Z))\subset \mathsf G(\BQ)$, where $k\geq 0$ and $\eta: \mathsf G\rightarrow (\GL_k)/_{\BQ}$ is an embedding of algebraic groups over $\BQ$. This definition is independent of $k$ and $\eta$.  All congruence subgroups of $\mathsf G(\BQ)$ are arithmetic subgroups.

Let  $\mathsf P$ be a parabolic subgroup of $\mathsf G$. Denote by $\mathsf N_\mathsf P$ its unipotent radical, and write $\mathsf L_\mathsf P:=\mathsf P/\mathsf N_\mathsf P$ for its Levi quotient. Then $\mathsf L_\mathsf P$ is also a connected reductive linear algebraic group over $\BQ$. Write 
\[
 P=\mathsf P(\R), \quad N_{\mathsf P}:=\mathsf N_{\mathsf P}(\R)\quad \textrm{and}\quad L_{\mathsf P}:=\mathsf L_{\mathsf P}(\R).
\]

 Let $\Gamma_\mathsf P$ be an arithmetic subgroup of $\mathsf L_\mathsf P(\BQ)$. By abuse of notation, we use $\Gamma_\mathsf P N_{\mathsf P}$ to denote the pre-image of $\Gamma_{\mathsf P}$ under the quotient map
\[
  P\rightarrow  L_{\mathsf P}. 
\]

Let $\CA(\Gamma_\mathsf P N_{\mathsf P}\backslash G)$ denote the space of smooth automorphic forms on $\Gamma_\mathsf P N_{\mathsf P}\backslash G$. Recall that 
a complex valued smooth function $\phi$ on $\Gamma_\mathsf P N_{\mathsf P}\backslash G$  is called a smooth automorphic form if 
\begin{itemize}
\item 
$\phi$ is $\oZ(\g_\C)$-finite, namely the space $\{X.\phi\, : \, X\in \oZ(\g_\C)\}$ is finite dimensional (the algebra  $\oU(\g_\C)$ acts on the space of smooth functions by the differential of the right translations);
\item  
$\phi$ is uniformly of moderate growth, namely,  there exists a positive valued real algebraic function $\abs{\,\cdot\,}_\phi$ on $G$ with the following property:  for every $X\in \oU(\g_\C)$, there exists $C_{X}>0$ such that
\be
   \abs{(X. \phi)(g)}\leq C_{X} \cdot \abs{g}_\phi,\qquad \textrm{for all } g\in G.
\ee
\end{itemize}

 For every ideal $\mathfrak{J}$ of $\oZ(\g_\C)$ of  finite codimension, we have a space  $(\CA(\Gamma_\mathsf P N_{\mathsf P}\backslash G))^{\mathfrak J}$,  as defined in \eqref{vj0}. It carries the  action of $G$ by right translations. It also carries a natural Fr\'echet topology and is naturally a Casselman-Wallach representation of $G$, as explained in what follows. 
 Harish-Chandra's finiteness theorem (see \cite[Chapter I, \S 2, Theorem 1]{HC}) implies  that the space  of $K$-finite vectors in $(\CA(\Gamma_\mathsf P N_{\mathsf P}\backslash G))^{\mathfrak J}$ forms a finitely generated admissible $(\g_\C, K)$-module. Here and as before, $K$ is a maximal compact subgroup of $G$. Using this theorem and Casselman-Wallach's smooth globalization theorem (see \cite{Ca} or \cite[Corollary 11.6.8]{Wa2}), one shows that there is a positive valued real algebraic function $\abs{\,\cdot\,}_{\mathfrak J}$ on $G$ such that 
 \be\label{sn}
  \abs{\phi}_{X}:=\sup_{g\in G} \frac{\abs{(X. \phi)(g)}}{ \abs{g}_{\mathfrak J}}< \infty,
\ee
for all $\phi\in (\CA(\Gamma_\mathsf P N_{\mathsf P}\backslash G))^{\mathfrak J}$ and $X\in \oU(\g_\C)$. Under the seminorms $\{\abs{\,\cdot\,}_X\}_{X\in \oU(\g_\C)}$, $(\CA(\Gamma_\mathsf P N_{\mathsf P}\backslash G))^{\mathfrak J}$  becomes a Fr\'echet space, and is a Casselman-Wallach representation of $G$.  Moreover, the topology on $(\CA(\Gamma_\mathsf P N_{\mathsf P}\backslash G))^{\mathfrak J}$ is independent of 
the function  $\abs{\,\cdot\,}_{\mathfrak J}$ on $G$. 

We equip the space $\CA(\Gamma_\mathsf P N_{\mathsf P}\backslash G)$ with the inductive topology in the category of complex locally convex topological spaces:
\[
  \CA(\Gamma_\mathsf P N_{\mathsf P}\backslash G)=\varinjlim_{\mathfrak J} (\CA(\Gamma_\mathsf P N_{\mathsf P}\backslash G))^{\mathfrak J}.
\]
Then $\CA(\Gamma_\mathsf P N_{\mathsf P}\backslash G)$ is a quasi Casselman-Walach representation of $G$.

\subsection{The adelic setting}

Let $\A=\R\times \A_f$ denote the ring of adeles of $\BQ$. Then $\mathsf G(\A)=G\times \mathsf G(\A_f)$.
A representation $V$ of $\mathsf G(\A)$ is said to be smooth if 
\begin{itemize}
  \item it is smooth as a representation of $G$; and 
 \item  every vector in $V$ is fixed by an open compact subgroup of $\mathsf G(\A_f)$.
 \end{itemize}
 We define a quasi Casselman-Wallach representation of $\mathsf G(\A)$ to be a smooth representation $V$ of  $\mathsf G(\A)$ with the following properties:
\begin{itemize}
  \item for every open compact subgroup $K_f$ of $\mathsf G(\A_f)$, and every ideal $\mathfrak{J}$ of $\oZ(\g_\C)$ of  finite co-dimension, the space (with the subspace topology)
      \be\label{kfj}
        V^{K_f, \mathfrak J}:=\{v\in V\, : \, k.v=v, \ X.v=0, \ \textrm{for all }k\in K_f, \, X\in \mathfrak J\}
      \ee
     is a Casselman-Wallach representatiomation of $G$;
  \item in the category of complex locally convex topological vector spaces, the natural map
  \[
    \varinjlim_{K_f, \mathfrak J} V^{K_f, \mathfrak J}\rightarrow V
  \]
  is a topological linear isomorphism.
\end{itemize}

By abuse of notation, we use $\mathsf L_\mathsf P(\mathbb Q) \mathsf N_{\mathsf P}(\A)$ to denote the pre-image of $\mathsf L_\mathsf P(\mathbb Q)$  under the quotient map
\[
  \mathsf P(\A)\rightarrow \mathsf L_{\mathsf P}(\A). 
\]
Let $\mathcal A_\mathsf P(\mathsf G)$
denote the space of smooth automorphic forms on $(\mathsf L_\mathsf P(\mathbb Q) \mathsf N_{\mathsf P}(\A))\backslash \mathsf G(\A)$. Recall that a smooth function $\phi$ on  $(\mathsf L_\mathsf P(\mathbb Q) \mathsf N_{\mathsf P}(\A))\backslash \mathsf G(\A)$ is called a smooth automorphic form if 
\begin{itemize}
\item 
$\phi$ is right invariant under some open compact subgroup $K_f$ of $\mathsf G( \A_f)$;
\item 
for every $g_f\in \mathsf G( \A_f)$, the pullback of $\phi$ through the map 
\[
\begin{array}{rcl}
  \Gamma_{\mathsf P, g_f K_f} N_{\mathsf P}\backslash G&\rightarrow& (\mathsf L_\mathsf P(\mathbb Q) \mathsf N_{\mathsf P}(\A))\backslash \mathsf G(\A)/K_f,\\
   \textrm{the coset of $g\in G$} &\mapsto&\textrm{the double coset of $(g, g_f)\in \mathsf G(\A)$}\\
   \end{array}
\]
is a smooth automorphic form on $(\Gamma_{\mathsf P, g_f K_f} N_{\mathsf P})\backslash G$, where $\Gamma_{\mathsf P, g_f K_f} $ denotes the intersection of $\mathsf L_\mathsf P(\BQ)$ with the image $g_f K_f g_f^{-1} \cap \mathsf P(\A_f)$ under  the quotient map  
\[
  \mathsf P(\A_f)\rightarrow \mathsf L_\mathsf P(\A_f). 
\]
\end{itemize}

 By the discussion of Section \ref{sautof}, under the right translations, the space $(\mathcal A_\mathsf P(\mathsf G))^{K_f, \mathfrak J}$ (see \eqref{kfj}) is a Casselman-Wallach representation of $G$. Then using the inductive topology in the  category of locally convex topological vector space,
  \[
   \mathcal A_\mathsf P(\mathsf G)= \varinjlim_{K_f, \mathfrak J} (\mathcal A_\mathsf P(\mathsf G))^{K_f, \mathfrak J}
 \]
 is a quasi Casselman-Wallach representation of $\mathsf G(\A)$, under the right translations.

 \subsection{A decomposition of  $\mathcal A_\mathsf P(\mathsf G)$} 
 Write $\mathsf A_\mathsf P$ for the largest central split torus in $\mathsf L_\mathsf P$. We have a decomposition
\be\label{decomlp}
   \mathsf L_\mathsf P(\A)=A_\mathsf P\times \mathsf L^1_\mathsf P(\A),
\ee
where
\[
  \mathsf L^1_\mathsf P(\A):=\bigcap_{\chi\textrm{ is an algebraic character of $\mathsf L_{\mathsf P}$ defined over $\BQ$}} \ker\abs{\chi},
\]
and $A_\mathsf P$ denotes the identity connected component of $\mathsf A_\mathsf P(\R)$. Write $\a_\mathsf P$ for the Lie algebra of $A_\mathsf P$.

 The representation  $\mathcal A_\mathsf P(\mathsf G)$ also carries a locally finite linear action of $A_{\mathsf P}$ which commutes with the $\mathsf G(\A)$-action:
 \be\label{aca}
   (a.\phi)(x):=a^{-\rho_\mathsf P} \phi(ax),\quad a\in A_\mathsf P, \, \phi\in \mathcal A_\mathsf P(\mathsf G),\, x\in \mathsf P(\mathbb Q)\mathsf N_\mathsf P(\A)\backslash \mathsf G(\A),
 \ee
 where $\rho_\mathsf P\in \breve \a_\mathsf P$ denotes the half sum of the weights (with the multiplicities) associated to $\mathsf N_\mathsf P$. Here and henceforth, ``$\,\breve{\ }\,$" indicates the dual space.   The action \eqref{aca} differentiates  to a locally finite linear action of the commutative Lie algebra $\a_{\mathsf P,\C}:=\a_\mathsf P\otimes_\R \C$, and thus we have a generalized eigenspace decomposition
 \be\label{decome}
    \mathcal A_\mathsf P(\mathsf G)=\bigoplus_{\lambda\in \breve \a_{\mathsf P,\C}} \CA_\mathsf P(\mathsf G)_\lambda.
 \ee
For each $\phi\in  \mathcal A_\mathsf P(\mathsf G)$ and $\lambda\in \breve \a_{\mathsf P,\C}$, write $\phi_\lambda$ for the $\CA_\mathsf P(\mathsf G)_\lambda$-component of $\phi$ with respect to the decomposition \eqref{decome}.
The decomposition \eqref{decome} is $\mathsf G(\A)$-invariant. When $\mathsf P=\mathsf G$, we write $\CA(\mathsf G)$ for $\CA_{\mathsf G}(\mathsf G)$, and the decomposition \eqref{decome} is specified to
\be\label{decome2}
    \mathcal A(\mathsf G)=\bigoplus_{\lambda\in \breve \a_{\mathsf G,\C}} \CA(\mathsf G)_\lambda.
 \ee
In general, the map 
\[
  \phi\mapsto(g\mapsto (h\mapsto \phi(hg)\cdot h^{-\rho_\mathsf P})),\qquad  (g \in \mathsf G(\A), \,  h \in {\mathsf L}_{\mathsf P}(\mathbb Q)\backslash  {\mathsf L}_{\mathsf P}(\A))
\]
establishes an  isomorphism
\be\label{inducea}
  \CA_\mathsf P(\mathsf G)_{\lambda}\cong \Ind_{\mathsf P(\A)}^{\mathsf G(\A)} \left(\CA(\mathsf L_\mathsf P)_\lambda \right), \qquad (\lambda\in \breve \a_{\mathsf P,\C})
\ee
of Quasi Casselman-Wallach representations of $\mathsf G(\A)$. Here $h^{-\rho_\mathsf P}:=a_h^{-\rho_\mathsf P}$, with  $a_h$ denotes the projection of $h$ to $A_\mathsf P$ with respect to the decomposition \eqref{decomlp}.

\subsection{A lemma of Langlands}
Fix a pair $(\mathsf P_0, \mathsf A_0)$ such that $\mathsf P_0$ is a minimal parabolic subgroup of $\mathsf G$, and $\mathsf A_0$ is a maximal split torus in $\mathsf P_0$. Let 
 $\mathscr{P}$ denote the set of standard parabolic subgroups of $\mathsf G$, namely, parabolic subgroups containing $\mathsf P_0$. Suppose that  $\mathsf P\in \mathscr P$. View $\mathsf L_\mathsf P$ as an algebraic subgroup of $\mathsf P$ containing $\mathsf A_0$ so that 
\[
  \mathsf P=\mathsf L_\mathsf P\ltimes \mathsf N_\mathsf P.
\]

For simplicity, write $A_0:=A_{\mathsf P_0}$,  $\a_0:=\a_{\mathsf P_0}$.  Similar abbreviations will be used without further explanation.

 Denote by $\Delta_0^+\subset \breve \a_0$ the positive root system attached to the pair $(\mathsf P_0, \mathsf A_0)$.  More generally, write $\Delta_\mathsf P^+\subset \Delta_0^+$ for the set of roots associated to $\mathsf N_\mathsf P$. Given $\mathsf P'\in \mathscr P$ with $\mathsf P'\subset \mathsf P$, we have a decomposition
\[
  \a_{\mathsf P'}=\a_{\mathsf{P}}\oplus \a_{\mathsf{P}'}^{\mathsf{P}},\quad \textrm{and dually,}\quad \breve \a_{\mathsf P'}=\breve \a_{\mathsf{P}}\oplus \breve \a_{\mathsf{P}'}^{\mathsf{P}}
\]
where
\[
   \a_{\mathsf{P}'}^{\mathsf{P}}:=\a_{\mathsf P'}\cap \Span\{\alpha^\vee\, : \, \alpha\in \Delta_{\mathsf P'}^+\setminus \Delta_{\mathsf P}^+\}.
\]
Here and as before, for each $\alpha\in \Delta_0^+$, $\alpha^\vee\in \a_0$ denotes the corresponding coroot. In particular, we have
\[
  \a_{0}=\a_{\mathsf{P}}\oplus \a_0^{\mathsf{P}},\quad \textrm{and dually,}\quad \breve \a_0=\breve \a_{\mathsf{P}}\oplus \breve \a_0^{\mathsf{P}}.
\]

Define
\[
     \overline{\,^+\!\breve \a_{\mathsf{P}'}^{\mathsf{P}}}:=\textrm{the convex cone in $\breve \a_{\mathsf{P}'}^{\mathsf{P}}$ generated by } \{0\}\cup \{\alpha|_{\a_{\mathsf{P}'}^{\mathsf{P}}}\, : \, \alpha\in \Delta_{\mathsf P'}^+\setminus \Delta_{\mathsf P}^+ \}.  
\]
Let $^+\!\breve \a_{\mathsf P'}^{\mathsf P}$ denote the interior of $\overline{^+\!\breve \a_{\mathsf P'}^{\mathsf P}}$ in $\breve \a_{\mathsf P'}^{\mathsf P}$. 
Define
\begin{eqnarray*}
 \overline{{{\breve \a}_{\mathsf P}^+}}&:=&\{v\in  \breve \a_\mathsf{P}\, : \, \la  v, \alpha^\vee \ra\geq 0\textrm{ for all }\alpha\in \Delta_{\mathsf P}^+\}
\end{eqnarray*}
and
\[
  {\breve \a}_{\mathsf P}^+:=\{v\in  \breve \a_\mathsf{P}\, : \, \la  v, \alpha^\vee \ra> 0\textrm{ for all }\alpha\in \Delta_{\mathsf P}^+\}.
\]
Then ${\breve \a}_{\mathsf P}^+$ equals  the interior of  $\overline{{\breve \a}_{\mathsf P}^+}$ in ${\breve \a}_{\mathsf P}$.

Note that 
\[
  \overline{\breve \a_0^+}=\bigsqcup_{\mathsf P\in \mathscr P}  \breve \a_{\mathsf{P}}^+ \quad\textrm{and} \quad \overline{\,^+\breve \a_0^\mathsf G}=\bigsqcup_{\mathsf P\in \mathscr P} \,^+\breve{\a}_{\mathsf P}^{\mathsf G}.
\]
More generally, recall the following lemma which is due to Langlands (see \cite[Lemma IV.6.11]{BW} or \cite[Section 5.A.1]{Wa1}).
\begin{lem}
There is a decomposition
\be\label{langlands}
  \breve \a_0=\bigsqcup_{\mathsf P\in \mathscr P} \left(\breve \a_{\mathsf{P}}^+ - \overline{\,^+\!\breve \a_{0}^{\mathsf{P}}}\right).
\ee

\end{lem}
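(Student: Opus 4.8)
The plan is to deduce \eqref{langlands} from Moreau's conic decomposition theorem together with the standard correspondence between elements of $\mathscr P$ and faces of the closed dominant cone. Fix a $W$-invariant inner product $(\,\cdot\,,\,\cdot\,)$ on $\breve\a_0$, so that for each $\mathsf P\in\mathscr P$ the decomposition $\breve\a_0=\breve\a_{\mathsf P}\oplus\breve\a_0^{\mathsf P}$ is orthogonal. Set $\mathcal N^-:=-\overline{\,^+\!\breve\a_0^{\mathsf G}}$, the closed convex cone generated by the negatives of the simple roots, and $\overline{\breve\a_0^+}:=\{v\in\breve\a_0:\la v,\alpha^\vee\ra\ge 0\text{ for all }\alpha\in\Delta_0^+\}$, the closed dominant cone. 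The duality between simple roots and simple coroots shows that $\overline{\breve\a_0^+}$ is exactly the polar cone of $\mathcal N^-$ with respect to $(\,\cdot\,,\,\cdot\,)$, so Moreau's theorem provides, for every $x\in\breve\a_0$, a \emph{unique} decomposition
\[
  x=p+q,\qquad p\in\overline{\breve\a_0^+},\quad q\in\mathcal N^-,\quad (p,q)=0.
\]

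Next I would set up the face dictionary: the faces of $\overline{\breve\a_0^+}$ are indexed by subsets $\Theta$ of the set of simple roots, the relative interior of the $\Theta$-face being $\{v\in\overline{\breve\a_0^+}:\la v,\alpha^\vee\ra=0\text{ for }\alpha\in\Theta,\ \la v,\alpha^\vee\ra>0\text{ otherwise}\}$, and I claim this relative interior equals $\breve\a_{\mathsf P}^+$, where $\mathsf P\in\mathscr P$ is the parabolic whose Levi $\mathsf L_{\mathsf P}$ has simple root set $\Theta$; likewise $\overline{\,^+\!\breve\a_0^{\mathsf P}}$ is the closed convex cone in $\breve\a_0^{\mathsf P}$ generated by the simple roots of $\mathsf L_{\mathsf P}$. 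Verifying the first claim is the only non-formal point: one must pass from strict positivity of $v$ against the simple coroots $\beta^\vee$ with $\beta\notin\Theta$ to strict positivity against every $\alpha^\vee$ with $\alpha\in\Delta_{\mathsf P}^+$, which follows from the elementary fact that, in the expansion of $\alpha^\vee$ in the basis of simple coroots, the coefficient of $\gamma^\vee$ is a positive multiple of the coefficient of $\gamma$ in $\alpha$, hence is $>0$ precisely for the simple $\gamma$ appearing in $\alpha$.

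For the existence half of \eqref{langlands}: given $x$, take its Moreau decomposition $x=p+q$, let $\Theta$ be the unique subset for which $p$ lies in the relative interior of the $\Theta$-face, and let $\mathsf P\in\mathscr P$ be the associated parabolic, so $p\in\breve\a_{\mathsf P}^+$. Write $-q=\sum_{\gamma}d_{\gamma}\gamma$, a non-negative combination over all simple roots; then $0=(p,-q)=\sum_{\gamma}d_{\gamma}(p,\gamma)$, and every term satisfies $(p,\gamma)=\tfrac{(\gamma,\gamma)}2\la p,\gamma^\vee\ra\ge 0$, so $d_{\gamma}=0$ whenever $\la p,\gamma^\vee\ra>0$, i.e.\ whenever $\gamma\notin\Theta$. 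Hence $-q=\sum_{\gamma\in\Theta}d_{\gamma}\gamma\in\overline{\,^+\!\breve\a_0^{\mathsf P}}$, and $x=p-(-q)\in\breve\a_{\mathsf P}^+-\overline{\,^+\!\breve\a_0^{\mathsf P}}$.

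For disjointness: suppose $x=x_1-x_2$ with $x_1\in\breve\a_{\mathsf P}^+$ and $x_2\in\overline{\,^+\!\breve\a_0^{\mathsf P}}$. Orthogonality of $\breve\a_{\mathsf P}$ and $\breve\a_0^{\mathsf P}$ forces $(x_1,x_2)=0$, while $x_1\in\overline{\breve\a_0^+}$ and $-x_2\in\mathcal N^-$; so $x=x_1+(-x_2)$ is the Moreau decomposition of $x$, whence $x_1=p$ is determined by $x$ alone. Since $p\in\breve\a_{\mathsf P}^+$ lies in the relative interior of a unique face, $\Theta$ and hence $\mathsf P$ are determined by $x$; thus the sets $\breve\a_{\mathsf P}^+-\overline{\,^+\!\breve\a_0^{\mathsf P}}$, $\mathsf P\in\mathscr P$, are pairwise disjoint, and combined with the existence half this is \eqref{langlands}. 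The only genuine work, as noted, is the face dictionary of the second paragraph; everything else is forced by Moreau's decomposition. Alternatively one may argue by induction on $\dim\a_0$, as in \cite[Lemma IV.6.11]{BW} or \cite[Section 5.A.1]{Wa1}.
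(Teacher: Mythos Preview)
Your argument is correct. The paper does not actually prove this lemma; it simply quotes it from \cite[Lemma~IV.6.11]{BW} and \cite[Section~5.A.1]{Wa1}, so there is no in-paper proof to compare against directly. Your route via Moreau's conic decomposition is genuinely different from the inductive arguments in those references (which you also acknowledge at the end): rather than peeling off one simple root at a time, you produce the pair $(p,q)$ in one stroke from the orthogonal projection onto the closed dominant cone, and then read off the parabolic $\mathsf P$ from the face containing $p$. This buys a cleaner uniqueness argument, since disjointness is forced by the uniqueness clause of Moreau's theorem together with the fact that the $\breve\a_{\mathsf P}^+$ are the relative interiors of distinct faces.

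The ``only non-formal point'' you flag is indeed the crux, and your verification is fine: writing $\alpha=\sum_\gamma c_\gamma\gamma$ one has $\alpha^\vee=\tfrac{2}{(\alpha,\alpha)}\alpha=\sum_\gamma c_\gamma\tfrac{(\gamma,\gamma)}{(\alpha,\alpha)}\gamma^\vee$, so the coefficient of $\gamma^\vee$ in $\alpha^\vee$ is a positive multiple of $c_\gamma$, giving the required strict positivity of $\la p,\alpha^\vee\ra$ for $\alpha\in\Delta_{\mathsf P}^+$. The remaining checks (that Levi roots lie in $\breve\a_0^{\mathsf P}$ so that $-q\in\overline{\,^+\!\breve\a_0^{\mathsf P}}$, and that $\breve\a_{\mathsf P}^+\subset\overline{\breve\a_0^+}$ and $\overline{\,^+\!\breve\a_0^{\mathsf P}}\subset\overline{\,^+\!\breve\a_0^{\mathsf G}}$ for the disjointness direction) are straightforward in the paper's setup and go through as you indicate.
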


The decomposition \eqref{langlands} yields a map
\be\label{lp}
   \breve \a_0\rightarrow  \overline{\breve \a_{0}^+},\quad \nu \mapsto \nu_+
\ee
specified by requiring that $\nu_+\in \breve \a_{\mathsf{P}}^+$ and $\nu\in \nu_+ - \overline{\,^+\!\breve \a_{0}^{\mathsf{P}}}$, for some $\mathsf P\in \mathscr P$.


\subsection{Almost square integrable automorphic forms}
Put
\[
  \CA^2_\mathsf P(\mathsf G):=\{\phi\in \CA_\mathsf P(\mathsf G)\, : \, \phi(\,\cdot\,g)|_{\mathsf L_\mathsf P(\BQ)\backslash \mathsf L_\mathsf P^1(\A)}\textrm{ is square integrable for all } g\in \mathsf G(\A)\}.
\]
We use ``$\mathrm{Re}$" to indicates the real part in various context. 
Recall that an element $\phi$ of $\CA_\mathsf P(\mathsf G)$ belongs to $\CA^2_\mathsf P(\mathsf G)$ if and only if (see \cite[Lemma I.4.11]{MW})
\[
  \phi_{\mathsf P'}\in \bigoplus_{\lambda\in \breve \a_{\mathsf P',\C}, \,\mathrm{Re}(\lambda)\in  \breve \a_\mathsf P\, -\,^+\!\breve \a_{\mathsf P'}^{\mathsf P}} \CA_{\mathsf P'}(\mathsf G)_\lambda,\quad \textrm{for all $\mathsf P'\in \mathscr P$ with $\mathsf P'\subset \mathsf P$,}
\]
where   $\phi_{\mathsf P'}$ denotes the constant term of $\phi$ along $\mathsf P'$, namely 
\[
  \phi_{\mathsf P'}(g):=\int_{\mathsf N_{\mathsf P'}(\BQ)\backslash \mathsf N_{\mathsf P'}(\A)} \phi(ng) \od\! n
\]
for all $g\in \mathsf G(\A)$. Here $\od\!n$ is the invariant measure with total volume $1$.

 As a variation of  $\CA^2_\mathsf P(\mathsf G)$, we define 
\begin{eqnarray*}
   && \CA^{\bar 2}_\mathsf P(\mathsf G) \\
   \!&:=\!&\! \!\left \{\phi\in \CA_\mathsf P(\mathsf G)\, : \, \phi_{\mathsf P'}\in \bigoplus_{\lambda\in \breve \a_{\mathsf P',\C}, \,\mathrm{Re}(\lambda)\in \breve \a_\mathsf P\,-\,\overline{^+\!\breve \a_{\mathsf P'}^{\mathsf P}}} \CA_{\mathsf P'}(\mathsf G)_\lambda,\quad \textrm{for all $\mathsf P'\in \mathscr P$ with $\mathsf P'\subset \mathsf P$}\right \},
\end{eqnarray*}
and call it the space of almost square integrable automorphic forms.

It is clear that both $\CA^2_\mathsf P(\mathsf G)$ and $\CA^{\bar 2}_\mathsf P(\mathsf G)$ are are closed subspaces of $\CA_\mathsf P(\mathsf G)$. Moreover, they are $\mathsf G(\A)\times A_\mathsf P$-subrepresentations. 

The decomposition \eqref{decome} induces generalized eigenspace decompositions
\[
  \CA^2_\mathsf P(\mathsf G)=\bigoplus_{\lambda\in \breve \a_{\mathsf P,\C}} \CA^2_\mathsf P(\mathsf G)_\lambda\quad\textrm{and}\quad
  \CA^{\bar 2}_\mathsf P(\mathsf G)=\bigoplus_{\lambda\in \breve \a_{\mathsf P,\C}} \CA^{\bar 2}_\mathsf P(\mathsf G)_\lambda,
\]
and the isomorphism \eqref{inducea} induces isomorphisms
\be\label{ind2}
 \CA^{2}_\mathsf P(\mathsf G)_{\lambda}\cong \Ind_{\mathsf P(\A)}^{\mathsf G(\A)} \left(\CA^{2}(\mathsf L_\mathsf P)_\lambda \right)\quad \textrm{and}\quad \CA^{\bar 2}_\mathsf P(\mathsf G)_{\lambda}\cong \Ind_{\mathsf P(\A)}^{\mathsf G(\A)} \left(\CA^{\bar 2}(\mathsf L_\mathsf P)_\lambda \right),
 \ee
for all  $\lambda\in \breve \a_{\mathsf P,\C}$.
 
In what follows we describe the representation $\CA^{\bar 2}(\mathsf G)$ in terms of square integrable automorphic forms of the Levi factors. See \cite[Section 6]{Fr} for more  details.  
 View $\mathscr P$ as a category so that a morphism from $\mathsf P'\in \mathscr P$ to $\mathsf P''\in \mathscr P$ is defined to be an element of the set
\[
  \mathsf L_{\mathsf P''}(\BQ)\backslash \{\tilde w\in \mathsf G(\BQ)\, : \, \tilde w \mathsf L_{\mathsf P'} \tilde w^{-1}=\mathsf L_{\mathsf P''}\}/\mathsf L_{\mathsf P'}(\BQ),
\]
and the composition in the category in the one given by the group structure of $\mathsf G(\BQ)$. Then $\mathscr P$ is in fact a groupoid.  

Define a functor from $\mathscr P$ to the category of smooth representations of $\mathsf G(\A)$ as follows. It sends $\mathsf P'\in \mathscr P$ to the representation
\[
\CA^2_{\mathsf P'}(\mathsf G)_{\mathrm{Im}}:=\bigoplus_{\lambda\in \breve \a_{\mathsf P',\C},\,\mathrm{Re}(\lambda|_{\a_{\mathsf P'}^{\mathsf G}})=0}\CA^2_{\mathsf P'}(\mathsf G)_\lambda,
\]
and it sends a morphism $w: \mathsf P'\rightarrow \mathsf P''$ to the intertwining operator
\be\label{mw}
  M_{w}: \CA^2_{\mathsf P'}(\mathsf G)_{\mathrm{Im}}\rightarrow \CA^2_{\mathsf P''}(\mathsf G)_{\mathrm{Im}}.
      \ee
This is indeed a functor by the functional equation of intertwining operators (see \cite[Section 6]{La}).
Recall that the intertwining operator \eqref{mw} is obtained by the meromorphic continuation of the following family of  intertwining operators of  convergent integrals:
\be\label{mw2}
  \begin{array}{rcl}
  M_{w,\lambda'}: \CA^2_{\mathsf P'}(\mathsf G)_{\lambda'}&\rightarrow &\CA^2_{\mathsf P''}(\mathsf G)_{w.\lambda'},\\
 \phi&\mapsto & \left(g\mapsto \int_{(\mathsf N_{\mathsf P''}\cap (\tilde w \mathsf N_{\mathsf P'}\tilde w^{-1}))(\A)\backslash \mathsf N_{\mathsf P''}(\A)} \phi(\tilde w^{-1} ng)\od\! n\right),
 \end{array}
\ee
where $\lambda'\in \breve \a_{\mathsf P',\C}$ with
\be\label{deflamda}
  \la \mathrm{Re}(\lambda'-\rho_{\mathsf P'})|_{\a_{\mathsf P'}^\mathsf G}, \alpha^\vee\ra>0\quad \textrm{ for all }\alpha\in \Delta_{\mathsf P'}^+.
\ee
In \eqref{mw2}, $\tilde w\in \mathsf G(\BQ)$ denotes a representative of $w$, and $\od \!n$ denotes the quotient of the normalized Haar measures (the Haar measure on $\mathsf N_{\mathsf P''}(\A)$ is normalized so that $\mathsf N_{\mathsf P''}(\BQ)\backslash \mathsf N_{\mathsf P''}(\A)$ has volume $1$, and similarly for other unipotent groups).

The theory of Eisenstein series gives a $\mathsf G(\A)$-intertwining continuous linear map
\be\label{eis}
  E_{\mathsf P'}: \CA^2_{\mathsf P'}(\mathsf G)_{\mathrm{Im}}\rightarrow \CA(\mathsf G).
\ee
Recall that the map \eqref{eis} is obtained by the meromorphic continuation of the following family of  convergent Eisenstein series:
\[
  \begin{array}{rcl}
  E_{\mathsf P',\lambda'}: \CA^2_{\mathsf P'}(\mathsf G)_{\lambda'}&\rightarrow &\CA(\mathsf G),\\
 \phi&\mapsto & \left(g \mapsto \sum_{\delta\in \mathsf P'(\BQ) \backslash \mathsf G(\BQ)} \phi(\delta g)\right),
 \end{array}
\]
where $\lambda'$ is as in \eqref{deflamda}. By the functional equation of Eisenstein series, \eqref{eis} for various $\mathsf P'$ yields a
$\mathsf G(\A)$-intertwining continuous linear map
\be\label{eis2}
 E: \varinjlim_{\mathsf P'\in \mathscr P} \CA^2_{\mathsf P'}(\mathsf G)_{\mathrm{Im}}\rightarrow \CA(\mathsf G).
\ee

The following theorem is a reformulation of a special case of \cite[Theorem 14]{Fr}.  
\begin{thml}\label{thmbar2}
The map \eqref{eis2} induces a $\mathsf G(\A)$-intertwining topological linear isomorphism
\be\label{eis3}
 \varinjlim_{\mathsf P'\in \mathscr P} \CA^2_{\mathsf P'}(\mathsf G)_{\mathrm{Im}}\cong \CA^{\bar 2}(\mathsf G).
\ee
\end{thml}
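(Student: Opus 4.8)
The plan is to deduce Theorem~\ref{thmbar2} from Franke's structure theorem for the space of automorphic forms, \cite[Theorem 14]{Fr}, by translating the two formulations into one another. Recall that Franke's theorem decomposes $\CA(\mathsf G)$ as a direct sum indexed by associate classes of cuspidal data $(\mathsf P',\varpi)$, with $\varpi$ a cuspidal automorphic representation of $\mathsf L_{\mathsf P'}(\A)$, and realizes each summand out of Eisenstein series attached to that datum, allowing iterated residues and derivatives in the parameter $\lambda\in\breve\a_{\mathsf P',\C}$. By the description of the discrete spectrum of a Levi $\mathsf L_{\mathsf P'}$ (Langlands; see also \cite{MW}), the iterated residues of cuspidal Eisenstein series already exhaust $\CA^2(\mathsf L_{\mathsf P'})$; so, grouping Franke's data by the Levi up to association, the summands relevant here are exactly those built from Eisenstein series with datum in $\bigoplus_{\mathsf P'}\CA^2_{\mathsf P'}(\mathsf G)_{\mathrm{Im}}$. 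The special case to isolate is the part of $\CA(\mathsf G)$ requiring neither a genuine residue (square-integrable data on the Levi suffice) nor a derivative in the $\breve\a_{\mathsf P'}^{\mathsf G}$-directions (the datum stays on the unitary axis, with no pole to differentiate around); the first thing I would pin down is that this part coincides with the subspace cut out by the constant-term condition defining $\CA^{\bar 2}(\mathsf G)$.

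I would then argue in three steps. First, the image of the map $E$ of \eqref{eis2} lies in $\CA^{\bar 2}(\mathsf G)$: for $\psi\in\CA^2_{\mathsf P'}(\mathsf G)_{\lambda'}$ with $\mathrm{Re}(\lambda'|_{\a_{\mathsf P'}^{\mathsf G}})=0$ and each standard $\mathsf P''$, the Langlands constant-term formula expresses $E_{\mathsf P'}(\psi)_{\mathsf P''}$ as a finite sum of terms assembled from the intertwining operators $M_w$, with exponents of the form $w\lambda'$; because $\mathrm{Re}(\lambda')$ sits on the boundary of the convergence region \eqref{deflamda}, the geometry behind the Langlands decomposition \eqref{langlands} forces the real parts of these exponents into $\breve\a_{\mathsf G}-\overline{\,^+\!\breve\a_{\mathsf P''}^{\mathsf G}}$, which is precisely the condition defining $\CA^{\bar 2}(\mathsf G)$, compare \cite[Lemma I.4.11]{MW}. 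Second, $E$ is injective: this says that the functional equations $E_{\mathsf P''}\circ M_w=E_{\mathsf P'}$ --- already used to define $E$ on the colimit in \eqref{eis2} --- account for \emph{all} linear relations among these Eisenstein families, which is part of Langlands' spectral theory (linear independence of Eisenstein families with inequivalent cuspidal support) and is built into the isomorphism of \cite[Theorem 14]{Fr}. Third, surjectivity onto $\CA^{\bar 2}(\mathsf G)$ is the genuine input from \cite[Theorem 14]{Fr}: an automorphic form whose constant terms satisfy the $\CA^{\bar 2}$-growth bound requires no differentiation in the $\breve\a_{\mathsf P'}^{\mathsf G}$-directions, so Franke's decomposition exhibits it as a combination of values $E_{\mathsf P'}(\psi)$ with $\psi\in\CA^2_{\mathsf P'}(\mathsf G)_{\mathrm{Im}}$. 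Finally, $E$ is continuous by construction, and both sides carry the natural inductive-limit topologies assembled from Casselman-Wallach representations; since on each finite level ($K_f$ and $\mathfrak J$ fixed) the map is a continuous bijection of Fr\'echet spaces, the open mapping theorem upgrades the algebraic isomorphism to a topological one --- equivalently, one may transport the topology through Franke's identification.

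The step I expect to be the main obstacle is the precise matching of growth conditions in the first step, together with the identification preceding it: one must verify that the \emph{closed} cone $\overline{\,^+\!\breve\a_{\mathsf P'}^{\mathsf G}}$ in the definition of $\CA^{\bar 2}(\mathsf G)$ corresponds exactly to letting the datum range over the whole closed unitary axis $\mathrm{Re}(\lambda|_{\a_{\mathsf P'}^{\mathsf G}})=0$ --- no more, no less --- rather than also permitting a single residue, and one must keep careful track of the $\rho_{\mathsf P'}$-shifts and of the interaction of \eqref{langlands} with the exponents of the various constant terms. This bookkeeping is of exactly the type encoded in \cite[Lemma I.4.11]{MW}, but carried out for the closed cone governing $\CA^{\bar 2}$ rather than the open cone governing $\CA^2$; getting the boundary case right, and checking that Franke's summand-by-summand decomposition is compatible with it, is where the real work lies.
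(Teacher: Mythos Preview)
Your proposal is essentially correct and follows the same approach as the paper: deduce the result from \cite[Theorem 14]{Fr}. In fact the paper offers no proof at all beyond the sentence ``The following theorem is a reformulation of a special case of \cite[Theorem 14]{Fr}'', so your three-step outline (image lands in $\CA^{\bar 2}(\mathsf G)$ via the constant-term formula and the closed-cone condition, injectivity from the functional equations, surjectivity from Franke's decomposition) is precisely the unpacking of that reformulation that the paper leaves to the reader.
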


As a reformulation of \eqref{eis3}, we have
\be\label{eis4}
 \CA^{\bar 2}(\mathsf G)\cong \CA^{2}(\mathsf G)\oplus \left(\bigoplus_{\{\mathsf P\}, \mathsf P\neq \mathsf G} (\CA^2_{\mathsf P}(\mathsf G)_{\mathrm{Im}})^{\Aut_\mathscr P(\mathsf P)}\right),
\ee
where $\{\mathsf P\}\subset \mathscr P$ denotes the associated class of $\mathsf P$, namely the isomorphism class of $\mathsf P$ in the groupoind $\mathscr P$, and $\Aut_\mathscr P(\mathsf P)$ denotes the finite group of the automorphisms of the object $\mathsf P$ in the groupoid $\mathscr P$.

\subsection{Franke's filtration}

Write
\[
\rho_0^\vee:=\frac{1}{2} \sum_{\alpha\in \Delta_0^+, \, \alpha/2 \notin \Delta_0^+} \alpha^\vee \in \a_0
\]
so that  (see \cite[Proposition 2.69]{Kn} for example)
\[
  \la \rho_0^\vee, \alpha\ra=1\quad \textrm{for every simple root in $\Delta_0^+$.}
\]
  Following Franke \cite{Fr}, for each real number $t\geq 0$, define
\[
  \CA(\mathsf G)_{[\leq t]}:=\left \{\phi\in \CA(\mathsf G)\, : \, \phi_{\mathsf P}\in \bigoplus_{\lambda\in \breve \a_{\mathsf P,\C}, \,\, \la (\mathrm{Re}(\lambda))_+ , \rho_0^\vee\ra \leq t } \CA_\mathsf P(\mathsf G)_\lambda,\,\textrm{ for all $\mathsf P\in \mathscr P$} \right \},
\]
\[
  \CA(\mathsf G)_{[< t]}:=\left \{\phi\in \CA(\mathsf G)\, : \, \phi_{\mathsf P}\in \bigoplus_{\lambda\in \breve \a_{\mathsf P,\C}, \,\, \la (\mathrm{Re}(\lambda))_+ , \rho_0^\vee\ra <t } \CA_\mathsf P(\mathsf G)_\lambda,\,\textrm{ for all $\mathsf P\in \mathscr P$} \right \},
\]
and
\[
   \CA(\mathsf G)_{[t]}:=\frac{\CA(\mathsf G)_{[\leq t]}}{\CA(\mathsf G)_{[<t]}},
\]
where $\phi_\mathsf P\in \CA_\mathsf P(\mathsf G)$ denotes the constant term of $\phi$ along $\mathsf P$, as before. We remark that these spaces are independent of the pair $(\mathsf P_0, \mathsf A_0)$. It is clear that
\[
  \CA(\mathsf G)_{[\leq 0]}=\CA^{\bar 2}(\mathsf G)\quad\textrm{and}\quad \CA(\mathsf G)_{[< 0]}=\{0\}.
\]

Define a linear map
\be\label{fil0}
   \begin{array}{rcl}
      \CA(\mathsf G)_{[\leq t]}&\rightarrow & \bigoplus_{\mathsf P\in \mathscr P} \CA_\mathsf P(\mathsf G),\smallskip \\
        \phi&\mapsto & \sum_{\mathsf P\in \mathscr P}  \sum_{\lambda\in \breve \a_{\mathsf P,\C}, \,\Re(\lambda)\in \breve \a_\mathsf P^+,\,\la \Re(\lambda), \rho_0^\vee\ra=t}(\phi_\mathsf P)_\lambda.
   \end{array}
\ee

The following theorem follows from \cite[Theorem 14]{Fr} and its proof.  

\begin{thml}\label{frf}
The linear map \eqref{fil0} induces an isomorphism 
\be\label{isogr}
  \CA(\mathsf G)_{[t]}\cong  \bigoplus_{\mathsf P\in \mathscr P} \, \bigoplus_{\lambda\in \breve \a_{\mathsf P,\C}, \,\Re(\lambda)\in \breve \a_\mathsf P^+,\,\la \Re(\lambda), \rho_0^\vee\ra=t}\CA_\mathsf P^{\bar 2}(\mathsf G)_\lambda
\ee
of quasi Casselman-Wallach representations of $\mathsf G(\A)$. 
\end{thml}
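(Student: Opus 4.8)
The plan is to deduce Theorem \ref{frf} from Franke's construction of the filtration of $\CA(\mathsf G)$ in \cite[Theorem 14]{Fr}, together with Theorem \ref{thmbar2} applied to the Levi subgroups, by checking that the explicit constant-term map \eqref{fil0} realizes Franke's isomorphism under the standard dictionary relating iterated Eisenstein series to the spaces $\CA^{\bar 2}_\mathsf P(\mathsf G)_\lambda$. I would split this into three stages: (i) \eqref{fil0} is well defined into the claimed target and annihilates $\CA(\mathsf G)_{[<t]}$, so it descends to a $\mathsf G(\A)$-equivariant continuous map $\bar f\colon\CA(\mathsf G)_{[t]}\to\bigoplus_{\mathsf P,\lambda}\CA^{\bar 2}_\mathsf P(\mathsf G)_\lambda$ (the sum as in \eqref{isogr}); (ii) $\bar f$ is injective; (iii) $\bar f$ is surjective, which is where Franke's theorem does the real work.

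For stage (i), fix $\phi\in\CA(\mathsf G)_{[\leq t]}$, $\mathsf P\in\mathscr P$, and $\lambda\in\breve\a_{\mathsf P,\C}$ with $\Re(\lambda)\in\breve\a_\mathsf P^+$ and $\la\Re(\lambda),\rho_0^\vee\ra=t$. The defining condition for $(\phi_\mathsf P)_\lambda\in\CA^{\bar 2}_\mathsf P(\mathsf G)_\lambda$ concerns the constant terms of $(\phi_\mathsf P)_\lambda$ along the $\mathsf P'\subset\mathsf P$, and by transitivity of normalized constant terms these are exactly the parts of $\phi_{\mathsf P'}$ supported on exponents $\lambda'$ with $\lambda'|_{\a_\mathsf P}=\lambda$; for such $\lambda'$ the hypothesis $\phi\in\CA(\mathsf G)_{[\leq t]}$ gives $\la(\Re(\lambda'))_+,\rho_0^\vee\ra\leq t=\la\Re(\lambda),\rho_0^\vee\ra$. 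I would then invoke the monotonicity of the map \eqref{lp}: for $\mu\in\breve\a_{\mathsf P'}$ with $\mu|_{\a_\mathsf P}\in\overline{\breve\a_\mathsf P^+}$ one has $\la\mu_+,\rho_0^\vee\ra\geq\la\mu|_{\a_\mathsf P},\rho_0^\vee\ra$, with equality precisely when $\mu-\mu|_{\a_\mathsf P}\in-\overline{\,^+\!\breve\a_{\mathsf P'}^{\mathsf P}}$ (a standard property of the Langlands decomposition \eqref{langlands}; cf. \cite[\S IV.6]{BW} and \cite[\S 6]{Fr}). This forces $\Re(\lambda')\in\breve\a_\mathsf P-\overline{\,^+\!\breve\a_{\mathsf P'}^{\mathsf P}}$ for every $\mathsf P'\subset\mathsf P$, i.e. $(\phi_\mathsf P)_\lambda\in\CA^{\bar 2}_\mathsf P(\mathsf G)_\lambda$. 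On the other hand, if $\phi\in\CA(\mathsf G)_{[<t]}$, then every exponent $\lambda$ of every $\phi_\mathsf P$ satisfies $\la(\Re(\lambda))_+,\rho_0^\vee\ra<t$, whereas $\Re(\lambda)\in\breve\a_\mathsf P^+\subset\overline{\breve\a_0^+}$ gives $(\Re(\lambda))_+=\Re(\lambda)$ and $\la\Re(\lambda),\rho_0^\vee\ra=t$; hence no such exponent contributes and \eqref{fil0} vanishes on $\CA(\mathsf G)_{[<t]}$. Equivariance and continuity of $\bar f$ are immediate, since constant terms and the projections onto $\a_\mathsf P$-generalized eigenspaces are equivariant and continuous.

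For stage (ii), let $\phi\in\CA(\mathsf G)_{[\leq t]}$ represent a class in $\Ker\bar f$, so that for every $\mathsf R\in\mathscr P$ the part of $\phi_\mathsf R$ supported on exponents $\mu$ with $\Re(\mu)\in\breve\a_\mathsf R^+$ and $\la\Re(\mu),\rho_0^\vee\ra=t$ vanishes. I claim $\phi\in\CA(\mathsf G)_{[<t]}$; equivalently, every exponent $\mu$ of every $\phi_\mathsf P$ with $\la(\Re(\mu))_+,\rho_0^\vee\ra=t$ has vanishing $\mu$-component in $\phi_\mathsf P$. Given such $\mathsf P$ and $\mu$, the decomposition \eqref{langlands} places $(\Re(\mu))_+$ in $\breve\a_\mathsf R^+$ for a unique $\mathsf R\supseteq\mathsf P$, with $\Re(\mu)|_{\a_\mathsf R}=(\Re(\mu))_+$; by transitivity of constant terms, the $\mu$-component of $\phi_\mathsf P$ is obtained by applying the constant-term map $\CA_\mathsf R(\mathsf G)\to\CA_\mathsf P(\mathsf G)$ to the $(\mu|_{\a_\mathsf R})$-component of $\phi_\mathsf R$, and the latter is a summand of the vanishing ``$\breve\a_\mathsf R$-strictly dominant, level $t$'' part of $\phi_\mathsf R$. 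Hence the $\mu$-component vanishes, and $\bar f$ is injective.

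For stage (iii) I would appeal to \cite[Theorem 14]{Fr} and its proof. After re-indexing by the level $\la(\Re(\lambda))_+,\rho_0^\vee\ra$, Franke's filtration of $\CA(\mathsf G)$ coincides with $\{\CA(\mathsf G)_{[\leq t]}\}_t$, and Franke identifies each graded piece with a direct sum of (iterated) Eisenstein series attached to the square-integrable spectrum of Levi subgroups, his inverse isomorphism being essentially ``form the Eisenstein series.'' Using \eqref{ind2} to rewrite $\CA^{\bar 2}_\mathsf P(\mathsf G)_\lambda\cong\Ind_{\mathsf P(\A)}^{\mathsf G(\A)}\CA^{\bar 2}(\mathsf L_\mathsf P)_\lambda$, and Theorem \ref{thmbar2} for the reductive group $\mathsf L_\mathsf P$ to resolve $\CA^{\bar 2}(\mathsf L_\mathsf P)_\lambda$ into square-integrable Eisenstein data on the Levi subgroups of $\mathsf L_\mathsf P$, one matches Franke's level-$t$ graded piece with $\bigoplus_{\mathsf P,\lambda}\CA^{\bar 2}_\mathsf P(\mathsf G)_\lambda$ term by term; the sum runs over all of $\mathscr P$ rather than over associate classes because, on the locus where $\Re(\lambda)$ is strictly dominant in $\breve\a_\mathsf P$ and of level $t$, each associate class has a single representative (an associating Weyl element would have to carry $\Delta_\mathsf P^+$ bijectively onto $\Delta_\mathsf Q^+$, which is impossible for distinct standard parabolics, and for $\mathsf Q=\mathsf P$ it then fixes $\lambda$ since $\a_\mathsf P$ is central in $\mathsf L_\mathsf P$), so no $\Aut_{\mathscr P}$-invariants appear. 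Langlands' constant-term formula then shows that composing ``form the Eisenstein series'' with \eqref{fil0} is the identity: the $\breve\a_\mathsf Q$-strictly dominant, level-$t$ part of the constant term along $\mathsf Q$ of the Eisenstein series attached to a datum in $\CA^{\bar 2}_\mathsf P(\mathsf G)_\lambda$ returns that datum when $\mathsf Q=\mathsf P$, and lies in a lower filtration step otherwise. This gives surjectivity of $\bar f$, and together with (i) and (ii) the isomorphism \eqref{isogr}; that it is topological is inherited from Franke's topological isomorphism. The real obstacle is concentrated in stage (iii)---the regularity of the pertinent Eisenstein series along the strictly dominant chamber $\breve\a_\mathsf P^+$ and the precise combinatorics of Franke's filtration, exactly the content that \cite[Theorem 14]{Fr} and its proof supply; our part is only the (routine, if somewhat delicate) translation through \eqref{lp}, \eqref{ind2}, and Theorem \ref{thmbar2}.
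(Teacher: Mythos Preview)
Your proposal is correct and aligned with the paper's approach: the paper gives no argument beyond the sentence ``The following theorem follows from \cite[Theorem 14]{Fr} and its proof,'' so both you and the paper rest the result on Franke's filtration theorem. You in fact supply more than the paper does---explicit verifications of well-definedness and injectivity via the Langlands combinatorics before invoking Franke for the surjectivity---which is a reasonable unpacking of what the citation entails.
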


\subsection{Franke's filtration with fixed generalized infinitesimal character}

Write
\[
   \CA(\mathsf G)=\bigoplus_{\chi: \oZ(\g_\C)\rightarrow \C} \CA(\mathsf G)_\chi
\]
for  the generalized eigenspace decomposition with respect to the action of $\oZ(\g_\C)$. 
For each character $\chi: \oZ(\g_\C)\rightarrow \C$, 
\be\label{filchi}
  \CA(\mathsf G)_{\chi,\,[\leq\, t]}:=\CA(\mathsf G)_{\chi}\cap \CA(\mathsf G)_{[\leq\, t]}, \quad (t\in \R, t\geq 0),
\ee
defines an increasing filtration of $\CA(\mathsf G)_{\chi}$. Let $\h_{0}$ denote a  maximally split Cartan subalgebra of the Lie algebra of  of $\mathsf L_0(\R)$. 
Then $\h_0\supset \a_0$ and hence $\h_{0, \C}\supset \a_{0, \C}$. Define
\[
  \Xi_\chi:=\{\mu|_{\a_{0,\C}}\,:\,\mu \in \breve \h_{0,\C}\textrm{ is a Harish-Chandra parameter of $\chi$}  \}.
\]
This is independent of the choice of $\h_{0}$.
Put
\[
  T_\chi:=\{\la (\Re(\lambda|_{\a_\mathsf P}))_+, \rho_0^\vee\ra\, : \, \mathsf P\in \mathscr P,\,\lambda\in \Xi_\chi\}.
\]
It is a finite set containing $0$.

\begin{lem}\label{infifil0}
Suppose that $\phi\in \CA(\mathsf G)_{\chi}$ and $\phi_{\mathsf P, \lambda}\neq 0$ for some $\mathsf P\in \mathscr P$ and $\lambda\in \breve \a_{\mathsf P,\C}$. Then 
\[
 \la (\Re(\lambda|_{\a_\mathsf P}))_+, \rho_0^\vee\ra\in T_\chi.
\]
\end{lem}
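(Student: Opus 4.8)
The plan is to pull the hypothesis $\phi\in\CA(\mathsf G)_\chi$ down along the constant term map and the induction isomorphism \eqref{inducea}, and to see what constraint the infinitesimal character $\chi$ imposes on the exponent $\lambda$. Write $\mathfrak m_\chi:=\ker\chi\subset\oZ(\g_\C)$, so that $\phi$ is annihilated by $\mathfrak m_\chi^{\,k}$ for some $k$. First I would note that passing to the constant term and then to an $A_\mathsf P$-generalized eigenspace preserves this: $\phi\mapsto\phi_\mathsf P$ commutes with right translation by $\mathsf G(\A)$, hence with $\oZ(\g_\C)$, and the projection $\phi_\mathsf P\mapsto\phi_{\mathsf P,\lambda}$ onto the $\lambda$-summand of \eqref{decome} likewise commutes with $\oZ(\g_\C)$, because the $A_\mathsf P$-action \eqref{aca} commutes with that of $\mathsf G(\A)$. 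Thus $\phi_{\mathsf P,\lambda}$ is a nonzero element of $\CA_\mathsf P(\mathsf G)_\lambda$ annihilated by $\mathfrak m_\chi^{\,k}$.

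Next I would transfer the problem to the Levi quotient. Via \eqref{inducea}, $\phi_{\mathsf P,\lambda}$ becomes a nonzero section in $\Ind_{\mathsf P(\A)}^{\mathsf G(\A)}(\CA(\mathsf L_\mathsf P)_\lambda)$. Since on a normalized parabolically induced representation $\oZ(\g_\C)$ acts through the normalized Harish-Chandra homomorphism $\gamma_\mathsf P\colon\oZ(\g_\C)\to\oZ(\l_{\mathsf P,\C})$ followed by the action of $\oZ(\l_{\mathsf P,\C})$ on the inducing module (see, e.g., \cite{KV}), evaluating the section at a point where it does not vanish yields a nonzero $v\in\CA(\mathsf L_\mathsf P)_\lambda$ with $\gamma_\mathsf P(\mathfrak m_\chi)^{\,k}v=0$. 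As $\oZ(\l_{\mathsf P,\C})$ acts locally finitely on automorphic forms, $v$ lies in a finite sum of generalized $\oZ(\l_{\mathsf P,\C})$-eigenspaces; choosing a summand, with generalized character $\chi'$, on which $v$ has nonzero component, the vanishing $\gamma_\mathsf P(\mathfrak m_\chi)^{\,k}v=0$ forces $\chi'\circ\gamma_\mathsf P=\chi$, since an element of $\mathfrak m_\chi$ with $(\chi'\circ\gamma_\mathsf P)$ nonzero on it would act invertibly on that component.

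Now I would read off $\lambda$. On the one hand, $v\in\CA(\mathsf L_\mathsf P)_\lambda$ forces the central subalgebra $\oU(\a_{\mathsf P,\C})\subset\oZ(\l_{\mathsf P,\C})$ to act on the relevant component through $\lambda$, i.e.\ $\chi'|_{\oU(\a_{\mathsf P,\C})}$ is evaluation at $\lambda$. On the other hand $\chi'$ corresponds, under the Harish-Chandra isomorphism for $\mathsf L_\mathsf P$ relative to $\h_0$, to a parameter $\mu'\in\breve\h_{0,\C}$, and --- the normalized $\gamma_\mathsf P$ being identified with the inclusion of invariants $\oU(\h_{0,\C})^{W}\hookrightarrow\oU(\h_{0,\C})^{W_{\mathsf L_\mathsf P}}$ --- the identity $\chi'\circ\gamma_\mathsf P=\chi$ says exactly that $\mu'=w\mu$ for some $w$ in the Weyl group $W$ of $(\g_\C,\h_{0,\C})$, where $\mu$ is a Harish-Chandra parameter of $\chi$. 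Since $\a_\mathsf P$ is central in $\l_\mathsf P$, every root of $\mathsf L_\mathsf P$ vanishes on it, so the restriction of $\chi'$ to $\oU(\a_{\mathsf P,\C})$ involves no $\rho$-shift and one gets $\lambda=\mu'|_{\a_{\mathsf P,\C}}=(w\mu)|_{\a_{\mathsf P,\C}}$. Putting $\nu:=(w\mu)|_{\a_{0,\C}}\in\Xi_\chi$, we conclude $\Re(\lambda|_{\a_\mathsf P})=\Re(\nu|_{\a_\mathsf P})$ as elements of $\breve\a_\mathsf P$, hence $\la(\Re(\lambda|_{\a_\mathsf P}))_+,\rho_0^\vee\ra=\la(\Re(\nu|_{\a_\mathsf P}))_+,\rho_0^\vee\ra$, which lies in $T_\chi$ by the definition of $T_\chi$.

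The step I expect to be the real obstacle is the normalization bookkeeping underlying the last two paragraphs --- reconciling the twist $a^{-\rho_\mathsf P}$ in \eqref{aca}, the normalization of the induction in \eqref{inducea}, and the normalized Harish-Chandra homomorphism $\gamma_\mathsf P$ so that the identity $\lambda=\mu'|_{\a_{\mathsf P,\C}}$ holds on the nose rather than up to a shift in $\breve\a_{\mathsf P,\C}$. All of this is contained in the theory of constant terms and Eisenstein series in the form set up in \cite{MW}; once the conventions are aligned the argument proceeds as above. (One could alternatively extract the lemma from Theorem \ref{frf} together with the description of $\CA^{\bar 2}$ in terms of square-integrable data on the Levi factors, but that route uses essentially the same input.)
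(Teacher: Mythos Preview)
Your proposal is correct and follows essentially the same route as the paper's proof: the paper simply invokes that normalized parabolic induction preserves the infinitesimal character to conclude $\lambda=\mu|_{\a_{\mathsf P,\C}}$ for some Harish-Chandra parameter $\mu\in\breve\h_{0,\C}$ of $\chi$, and then reads off membership in $T_\chi$; your argument spells out the same mechanism in detail (constant term, projection to the $\lambda$-summand, transfer via \eqref{inducea}, and the identity $\chi'\circ\gamma_\mathsf P=\chi$). The normalization issue you flag is indeed the only point requiring care, and the paper suppresses it entirely.
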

\begin{proof}
Recall that the normalized parabolic inductions for real reductive groups preserve the infinitesimal characters. This implies that
\[
 \lambda=\mu|_{\a_{\mathsf P,\C}} \ \  \textrm{for some Harish-Chandra parameter $\mu\in \breve \h_{0,\C}$ of $\chi$.}
\]
Hence the lemma follows. 
\end{proof}

Lemma \ref{infifil0} implies that the filtration \eqref{filchi} is finite. More precisely, Lemma \ref{infifil0} implies  the following result. 
\begin{lem}\label{infifil}
Let $t$ be a nonnegative real number. Then 
\[
  \CA(\mathsf G)_{\chi,\,[\leq\, t]}=\CA(\mathsf G)_{\chi,\,[\leq\, t']}, \quad \textrm{where }t':=\max\{t_1\in T_\chi\, : \, t_1\leq t\}.
\]
\end{lem}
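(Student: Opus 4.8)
The plan is to deduce the statement formally from Lemma~\ref{infifil0} together with the finiteness of $T_\chi$; no analysis beyond what already went into Lemma~\ref{infifil0} is needed. First I would record that $t'$ is well defined: since $T_\chi$ is finite and contains $0$, and $t\geq 0$, the set $\{t_1\in T_\chi\,:\,t_1\leq t\}$ is a nonempty finite subset of $\R$, hence has a maximum $t'$, and $0\leq t'\leq t$. Consequently, comparing the two instances of \eqref{filchi}, the inclusion $\CA(\mathsf G)_{\chi,\,[\leq\, t']}\subseteq \CA(\mathsf G)_{\chi,\,[\leq\, t]}$ is immediate from $t'\leq t$, because the defining constraint $\la(\Re(\lambda))_+,\rho_0^\vee\ra\leq t'$ on the surviving generalized eigenspaces of the constant terms is stronger than $\la(\Re(\lambda))_+,\rho_0^\vee\ra\leq t$.

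For the reverse inclusion I would take $\phi\in \CA(\mathsf G)_{\chi,\,[\leq\, t]}$ and show that $\phi\in \CA(\mathsf G)_{[\leq\, t']}$; since $\phi\in\CA(\mathsf G)_\chi$ already, this yields $\phi\in \CA(\mathsf G)_{\chi,\,[\leq\, t']}$ by \eqref{filchi}. Fix $\mathsf P\in\mathscr P$ and $\lambda\in\breve\a_{\mathsf P,\C}$ with $\phi_{\mathsf P,\lambda}\neq 0$, where $\phi_{\mathsf P,\lambda}$ denotes the $\CA_\mathsf P(\mathsf G)_\lambda$-component of the constant term $\phi_\mathsf P$ (and note that $\Re(\lambda|_{\a_\mathsf P})$ is simply $\Re(\lambda)$ here, as $\lambda$ already lies in $\breve\a_{\mathsf P,\C}$). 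From $\phi\in\CA(\mathsf G)_{[\leq\, t]}$ and the definition of $\CA(\mathsf G)_{[\leq\, t]}$ we get $\la(\Re(\lambda))_+,\rho_0^\vee\ra\leq t$. From $\phi\in\CA(\mathsf G)_\chi$ and $\phi_{\mathsf P,\lambda}\neq 0$, Lemma~\ref{infifil0} gives $\la(\Re(\lambda))_+,\rho_0^\vee\ra\in T_\chi$. Hence $\la(\Re(\lambda))_+,\rho_0^\vee\ra$ lies in $\{t_1\in T_\chi\,:\,t_1\leq t\}$, so $\la(\Re(\lambda))_+,\rho_0^\vee\ra\leq t'$. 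Since $\mathsf P$ and $\lambda$ were arbitrary, this says precisely that $\phi_\mathsf P$ lies in the sum of the $\CA_\mathsf P(\mathsf G)_\lambda$ with $\la(\Re(\lambda))_+,\rho_0^\vee\ra\leq t'$ for every $\mathsf P\in\mathscr P$, i.e.\ $\phi\in\CA(\mathsf G)_{[\leq\, t']}$, which finishes the proof.

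The argument is purely set-theoretic once Lemma~\ref{infifil0} is available, so I do not expect a genuine obstacle; the only point to verify carefully is that the scalar $\la(\Re(\lambda))_+,\rho_0^\vee\ra$ controlled by Lemma~\ref{infifil0} is literally the same quantity that appears both in the definition of the filtration $\CA(\mathsf G)_{[\leq\, t]}$ and in the definition of $T_\chi$ — in particular that the relevant projection is onto $\a_\mathsf P$ and that the map $\nu\mapsto\nu_+$ of \eqref{lp} is used consistently in all three places. Granting that, the lemma also shows, as asserted in the sentence preceding it, that the increasing filtration \eqref{filchi} of $\CA(\mathsf G)_\chi$ is finite, since it can only jump at the finitely many values in $T_\chi$.
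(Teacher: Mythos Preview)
Your proof is correct and is exactly the argument the paper has in mind: the paper does not give a separate proof but simply states that Lemma~\ref{infifil0} implies Lemma~\ref{infifil}, and your write-up is a faithful unpacking of that implication. The consistency check you flag (that $\la(\Re(\lambda))_+,\rho_0^\vee\ra$ is the same quantity in all three places) is indeed the only thing to verify, and it holds for the reason you give.
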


\section{Low degree cohomologies}

We continue with the notation of the last section.

\subsection{Low degree cohomologies of the space of automorphic forms}
 Write $\mathsf T_\mathsf P$ for the maximal anisotropic  central  torus in $\mathsf L_\mathsf P\cap \mathsf G^{\mathrm{der}}$, where $\mathsf G^{\mathrm{der}}$ denotes the derived subgroup of $\mathsf G$. Write
 $\mathsf C_{\mathsf P}$ for the the maximal  split torus of the real algebraic torus $\mathsf T_\mathsf P\times_{\mathrm{spec}(\BQ)} \mathrm{spec}(\R)$.

We say that an irreducible Cassleman-Wallach repesentation $\sigma$ of $L_\mathsf P=\mathsf L_\mathsf P(\R)$ has a $\g$-real infinitesimal character if 
\[
  \la \lambda_\sigma, \alpha^\vee\ra \in \R\quad \textrm{for all root $\alpha\in \Delta(\h_\C, \g_\C)$}, 
\]
where $\h$ is a Cartan subalgebra of the Lie algebra of $ L_{\mathsf P}$, and $\lambda_\sigma\in \breve \h_\C$ is a Harish-Chandra parameter of the infinitesimal character of $\sigma$. This definition is independent of $\h$ and $\lambda_\sigma$.

\begin{lem}\label{vanisht}
Let $\sigma$ be an irreducible Cassleman-Wallach repesentation of $ L_\mathsf P$ with $\g$-real infinitesimal character. If it occurs as an $L_\mathsf P$-subquotient of  $\CA(\mathsf L_{\mathsf P})$, then the identity connected component $\mathsf C_{\mathsf P}(\R)_0$ of $\mathsf C_{\mathsf P}(\R)$  acts trivially on $\sigma$.
\end{lem}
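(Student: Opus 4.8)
The plan is to argue that the central torus $\mathsf C_{\mathsf P}(\R)_0$ acts on $\sigma$ through a unitary character (because $\sigma$ is an $L_\mathsf P$-subquotient of $\CA(\mathsf L_\mathsf P)$, hence of an automorphic nature) which must also be trivial on $\mathsf C_{\mathsf P}(\R)_0$ by the $\g$-real infinitesimal character hypothesis together with the fact that $\mathsf T_\mathsf P$ is anisotropic over $\BQ$. More precisely, since $\mathsf C_\mathsf P$ is a split torus contained in the center of $\mathsf L_\mathsf P$, Schur's lemma for Casselman-Wallach representations gives a character $\omega_\sigma: \mathsf C_\mathsf P(\R)_0\to\C^\times$ by which $\mathsf C_\mathsf P(\R)_0$ acts on $\sigma$; write $d\omega_\sigma\in\breve{\c}_{\mathsf P,\C}$ for its differential. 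The first step is to read off $d\omega_\sigma$ from the infinitesimal character: identifying $\c_\mathsf P$ with a subspace of a Cartan $\h$ of $\l_\mathsf P$, one has $d\omega_\sigma=\lambda_\sigma|_{\c_{\mathsf P,\C}}$ up to the usual $\rho$-shift, which is zero because $\c_\mathsf P$ lies in the center of $\l_\mathsf P$ (so the relevant half-sum of roots restricts to zero there). By the $\g$-real hypothesis, $\lambda_\sigma$ pairs to real numbers with all coroots of $\g_\C$, and since $\mathsf C_\mathsf P\subset\mathsf G^{\mathrm{der}}$ its cocharacter lattice is spanned by coroots; hence $d\omega_\sigma$ is real-valued, i.e.\ $\omega_\sigma$ takes values in $\R^\times_+$ (on the connected component).

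The second step is to use automorphy to force $\omega_\sigma$ to be trivial. Because $\sigma$ occurs as a subquotient of $\CA(\mathsf L_\mathsf P)$, its central character is constrained: the torus $\mathsf T_\mathsf P$ is defined over $\BQ$ and anisotropic, so $\mathsf T_\mathsf P(\BQ)\backslash\mathsf T_\mathsf P(\A)$ is compact; consequently the central character of any automorphic representation of $\mathsf L_\mathsf P(\A)$ restricted to $\mathsf T_\mathsf P(\A)$ is unitary, and in particular its archimedean component restricted to $\mathsf C_\mathsf P(\R)_0$ is a unitary character of a connected group, hence of the form $c\mapsto c^{it_0}$ for a real $t_0$ — but combined with the previous paragraph ($\omega_\sigma$ positive-real-valued, i.e.\ $d\omega_\sigma$ real) the only possibility is $\omega_\sigma\equiv 1$. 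To make the ``central character of a subquotient of $\CA(\mathsf L_\mathsf P)$ is unitary on $\mathsf T_\mathsf P(\A)$'' step rigorous I would invoke the compactness of $\mathsf T_\mathsf P(\BQ)\backslash\mathsf T_\mathsf P(\A)$ and the fact that a $\oZ(\g_\C)$-finite smooth function on $\mathsf L_\mathsf P(\BQ)\backslash\mathsf L_\mathsf P(\A)$ of moderate growth, when averaged against characters of this compact quotient, still yields automorphic forms; the eigencharacters appearing are therefore valued in the Pontryagin dual of a compact group, hence unitary.

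The main obstacle I anticipate is the bookkeeping that reconciles the two descriptions of the action of $\mathsf C_\mathsf P(\R)_0$: one coming from the infinitesimal-character side (where $\rho$-shifts and the precise identification of $\c_\mathsf P$ inside a Cartan of $\l_\mathsf P$ versus of $\g$ must be handled carefully, and where one must check the half-sum of roots attached to $\mathsf N_\mathsf P$ does not contribute because $\c_\mathsf P$ is \emph{central} in $\l_\mathsf P$), and one coming from the automorphic side (where one needs the passage from adelic central characters to the archimedean component, and the compactness of $\mathsf T_\mathsf P(\BQ)\backslash\mathsf T_\mathsf P(\A)$). Once both are in hand, the conclusion ``$d\omega_\sigma$ is simultaneously real and purely imaginary, hence zero, hence $\omega_\sigma$ is trivial on the connected group $\mathsf C_\mathsf P(\R)_0$'' is immediate. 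I would also remark that the independence of the $\g$-real condition from the choices of $\h$ and $\lambda_\sigma$, already asserted in the text, is what lets us pick $\h$ adapted to $\mathsf C_\mathsf P$ in the first step.
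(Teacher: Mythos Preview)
Your proposal is correct and follows essentially the same approach as the paper's own (very terse) proof: Schur's lemma gives a central character $\chi$ of $\mathsf C_{\mathsf P}(\R)_0$ on $\sigma$; the automorphic condition forces $\chi$ to be unitary (via compactness of $\mathsf T_{\mathsf P}(\BQ)\backslash\mathsf T_{\mathsf P}(\A)$, exactly as you say), while the $\g$-real infinitesimal character forces $\chi$ to be positive-real-valued; hence $\chi$ is trivial. Your write-up simply unpacks the two implications the paper states without justification --- in particular, your observation that $\c_{\mathsf P}$ lies in the span of the coroots because $\mathsf C_{\mathsf P}\subset\mathsf G^{\mathrm{der}}$ is precisely the missing step behind the paper's phrase ``the condition of $\g$-real infinitesimal character implies that $\chi$ is real.''
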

\begin{proof}
The group $\mathsf C_{\mathsf P}(\R)_0$ acts on $\sigma$ by a character $\chi$. The automorphic condition implies that $\chi$ is unitary, and the condition of $\g$-real infinitesimal character implies that $\chi$ is real. Hence $\chi$ must be trivial. 
\end{proof}

As in the Introduction, let $F$ be an irreducible finite dimensional  representation of $G=\mathsf G(\R)$. Recall the quantity $q_{G,F}$ from the Introduction. 

\begin{lem}\label{vaind}
Assume that $\mathsf P\neq \mathsf G$. Let $\lambda\in \breve \a_{\mathsf P,\C}$ with $\Re(\lambda)\in \overline{\breve \a_{\mathsf{P}}^+}$. Let $\sigma$ be an irreducible Cassleman-Wallach representation of $L_\mathsf P$ which occurs as an irreducible $L_\mathsf P$-subquotient  of $\CA^{\bar 2}(\mathsf L_\mathsf P)_\lambda$. Then
\be \label{chohi}
   \oH_\mathrm{ct}^i(G;  F\otimes \Ind_{P}^{G}\sigma)=0\quad  \textrm{ for all $i<q_{G, F}$.}
\ee
\end{lem}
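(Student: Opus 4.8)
The plan is to reduce the cohomology of $F\otimes\Ind_P^G\sigma$ to the cohomology of an induced representation to which Theorem~\ref{mainlocal} applies, using that $\sigma$ sits inside $\CA^{\bar 2}(\mathsf L_\mathsf P)_\lambda$. First I would invoke Theorem~\ref{thmbar2} (or rather its Levi-factor analogue, applied with $\mathsf L_\mathsf P$ in place of $\mathsf G$): the representation $\CA^{\bar 2}(\mathsf L_\mathsf P)_\lambda$ is built, via Eisenstein series and intertwining operators, out of the spaces $\CA^2_{\mathsf Q}(\mathsf L_\mathsf P)_{\mathrm{Im}}$ for standard parabolics $\mathsf Q$ of $\mathsf L_\mathsf P$, so any irreducible subquotient $\sigma$ of $\CA^{\bar 2}(\mathsf L_\mathsf P)_\lambda$ is an irreducible subquotient of $\Ind_{Q(\R)}^{L_\mathsf P}(\nu_\mathsf Q\otimes\tau)$ for some standard $\mathsf Q\subset\mathsf L_\mathsf P$, some tempered (indeed $L^2$-type) irreducible $\tau$ of the Levi of $\mathsf Q$, and some positive character $\nu_\mathsf Q$; the condition $\Re(\lambda)\in\overline{\breve\a_{\mathsf P}^+}$ together with the defining inequalities of $\CA^{\bar 2}$ forces $\nu_\mathsf Q$ to be dominant (after twisting $\tau$ by the imaginary part). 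Inducing in stages, $\Ind_P^G\sigma$ is then a subquotient of $\Ind_{R}^G(\nu_R\otimes\tau)$ where $\mathsf R=\mathsf Q\mathsf N_\mathsf P$ is a proper parabolic of $\mathsf G$ and $\nu_R$ is still dominant; the key point is that dominance is preserved because the roots in $\n_\mathsf P$ contribute nonnegatively. (One should be careful that the character $\lambda$ on $\a_\mathsf P$ and the character $\nu_\mathsf Q$ on the Levi of $\mathsf Q$ combine to a genuinely dominant positive character of the Levi of $\mathsf R$; this is where the hypothesis $\Re(\lambda)\in\overline{\breve\a_\mathsf P^+}$ is used.)

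Next I would pass from $\sigma$ being a subquotient to a statement about cohomology. Since $\sigma$ is an \emph{irreducible} subquotient of a Casselman--Wallach representation of finite length, there is a finite filtration of $\Ind_R^G(\nu_R\otimes\tau)$ (as $G$-representations, or rather of the associated Harish-Chandra modules, then globalized) whose graded pieces include $\sigma$ and other irreducible constituents, each of which is again an irreducible subquotient of a standard module $\Ind_{R'}^G(\nu'\otimes\tau')$ with $\mathsf R'$ proper and $\nu'$ dominant. The long exact sequences in continuous cohomology attached to this filtration, combined with Theorem~\ref{mainlocal} applied to each such standard module (which gives $\oH^i_{\mathrm{ct}}(G;F\otimes\Ind_{R'}^G(\nu'\otimes\tau'))=0$ for $i<r_G$, hence certainly for $i<q_{G,F}$ once we know $r_G\le q_G$), would let me conclude $\oH^i_{\mathrm{ct}}(G;F\otimes\Ind_R^G(\nu_R\otimes\tau))=0$ for $i<q_{G,F}$, and then by the same filtration argument $\oH^i_{\mathrm{ct}}(G;F\otimes\Ind_P^G\sigma)=0$ in that range. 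Alternatively, and perhaps more cleanly, I would argue directly: $\Ind_P^G\sigma$ is an irreducible subquotient of $\Ind_R^G(\nu_R\otimes\tau)$, and a dévissage of the latter produces $\Ind_P^G\sigma$ together with constituents of lower ``Langlands data'', all of which fall under Theorem~\ref{mainlocal}.

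One delicate point deserves emphasis: Theorem~\ref{mainlocal} is stated for $\Ind_P^G(\nu\otimes\sigma)$ with $\sigma$ \emph{unitarizable}, whereas the intermediate $\tau$ appearing above is tempered hence unitarizable, so this is fine; but the passage ``irreducible subquotient of a standard module'' must be handled so that the cohomology vanishing transfers. I would handle this by noting that $F\otimes(-)$ is exact, continuous cohomology is a $\delta$-functor on the category of Casselman--Wallach representations (here I use Theorem~\ref{fr} to identify it with $(\g_\C,K)$-cohomology, which is manifestly a $\delta$-functor with long exact sequences), and vanishing in degrees $<q_{G,F}$ is inherited by subobjects, quotients, and extensions in that range.

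\medskip

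The hard part, and the real content beyond bookkeeping, is the \emph{dominance preservation} step: showing that writing an irreducible subquotient $\sigma$ of $\CA^{\bar 2}(\mathsf L_\mathsf P)_\lambda$ with $\Re(\lambda)\in\overline{\breve\a_\mathsf P^+}$ as a subquotient of $\Ind_{Q(\R)}^{L_\mathsf P}(\nu_\mathsf Q\otimes\tau)$, and then inducing up through $\mathsf N_\mathsf P$, yields a parabolic $\mathsf R\subsetneq\mathsf G$ and a character $\nu_R$ of $L_\mathsf R$ that is \emph{dominant} in the sense of Definition~\ref{domch}. This requires unwinding the definition of $\CA^{\bar 2}$ in terms of constant-term support (the cone conditions $\Re(\lambda)\in\breve\a_\mathsf P-\overline{{}^+\breve\a_{\mathsf P'}^{\mathsf P}}$) and matching it against the coroot inequalities $\la\nu_R|_{\h_\C},\alpha^\vee\ra\ge0$ for $\alpha\in\Delta(\h_\C,\n_{\mathsf R,\C})$; the two positivity notions are adapted to different chambers and some care with which roots lie in $\n_\mathsf P$ versus $\n_\mathsf Q$ is needed. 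I expect everything else --- the filtration/dévissage arguments, the identification of cohomology, induction in stages --- to be routine given the machinery already set up in Sections~2 and~3.
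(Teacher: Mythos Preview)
Your approach has a genuine logical error that makes it fail to prove the stated bound. You invoke Theorem~\ref{mainlocal}, which gives vanishing of $\oH^i_{\mathrm{ct}}(G;F\otimes\Ind_{R'}^G(\nu'\otimes\tau'))$ for $i<r_G$, and then write ``hence certainly for $i<q_{G,F}$ once we know $r_G\le q_G$.'' But the inequality goes the wrong way: since $r_G\le q_G\le q_{G,F}$, the range $\{i:i<r_G\}$ is \emph{contained in} $\{i:i<q_{G,F}\}$, so vanishing on the smaller range says nothing about the larger one. Theorem~\ref{mainlocal} is strictly weaker than what you need here, and in fact the paper's proof of Lemma~\ref{vaind} does not use Theorem~\ref{mainlocal} at all (logically, Lemma~\ref{vaind} is an ingredient for the global Theorem~\ref{main0}, which is then combined with Theorem~\ref{mainlocal} to get Theorem~\ref{main}).

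Even if you replace Theorem~\ref{mainlocal} by the definition of $q_{G,F}$ in your argument, the d\'evissage step is problematic: knowing that $\oH^i(G;F\otimes\Ind_R^G(\nu_R\otimes\tau))=0$ for $i<q_{G,F}$ does not imply the same vanishing for an arbitrary subquotient such as $\Ind_P^G\sigma$. The long exact sequence only relates the cohomology of a subquotient to that of adjacent degrees of the complementary pieces, and you have no independent control over those.

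The paper's proof is far more direct and avoids both issues. One does not pass to a smaller parabolic $\mathsf R$ at all: instead one shows that $\sigma$ \emph{itself} can be written as $\nu\otimes\sigma'$ with $\nu$ a dominant positive character of $L_\mathsf P$ and $\sigma'$ unitarizable, so that the very definition of $q_{G,F}$ applies with the parabolic $P$. Concretely, if the infinitesimal character of $\sigma$ is not $\g$-real the cohomology already vanishes; otherwise Lemma~\ref{vanisht} forces $\mathsf C_\mathsf P(\R)_0$ to act trivially, so the non-unitary part of the central character of $\sigma$ is exactly $\Re(\lambda)$ on $A_\mathsf P$. Taking $\nu=\Re(\lambda)$ (dominant by the hypothesis $\Re(\lambda)\in\overline{\breve\a_\mathsf P^+}$) and $\sigma'=\nu^{-1}\otimes\sigma$ (unitarizable because $\CA^{\bar 2}(\mathsf L_\mathsf P)_\lambda$ is, modulo this twist, built from the $L^2$-spectrum via \eqref{eis4}), the definition of $q_{G,F}$ gives the vanishing immediately. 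No induction in stages, no filtration, no d\'evissage.
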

\begin{proof}
If the infinitesimal character of $\sigma$ is not $\g$-real, then the cohomology space of \eqref{chohi} vanish. So we assume that $\sigma$ has a $\g$-real infinitesimal character. Then in view of the definition of $q_{G,F}$, the lemma follows from Lemma \ref{vanisht}.
\end{proof}

\begin{lem}\label{val2}
Assume that $\mathsf P\neq \mathsf G$. Let $\lambda\in \breve \a_{\mathsf P,\C}$ with $\Re(\lambda)\in \overline{\breve \a_{\mathsf{P}}^+}$. Then
\[
   \oH_\mathrm{ct}^i(G;  F\otimes \CA^{\bar 2}_{\mathsf P}(\mathsf G)_\lambda)=0\quad  \textrm{ for all $i<q_{G,F}$.}
\]
\end{lem}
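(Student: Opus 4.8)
The plan is to reduce the statement, via the induction isomorphism \eqref{ind2} and a dévissage, to Lemma \ref{vaind}.

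First I would rewrite, using \eqref{ind2},
\[
 \CA^{\bar 2}_{\mathsf P}(\mathsf G)_\lambda \;\cong\; \Ind_{\mathsf P(\A)}^{\mathsf G(\A)}\bigl(\CA^{\bar 2}(\mathsf L_\mathsf P)_\lambda\bigr)
\]
as representations of $\mathsf G(\A)$. Its underlying smooth $\mathsf G(\A_f)$-module is the filtered union of the $K_f$-fixed subspaces over open compact $K_f\subset\mathsf G(\A_f)$. By Theorem \ref{fr}, $\oH^i_{\mathrm{ct}}(G;F\otimes -)$ on smooth $G$-modules is computed by the relative Lie algebra complex with terms $\Hom_K(\wedge^\bullet(\g_\C/\k_\C),F\otimes -)$; since $\wedge^\bullet(\g_\C/\k_\C)$ is finite-dimensional and the transition maps are injective, this functor commutes with the filtered union, so
\[
 \oH^i_{\mathrm{ct}}(G;F\otimes\CA^{\bar 2}_{\mathsf P}(\mathsf G)_\lambda)=\varinjlim_{K_f}\oH^i_{\mathrm{ct}}\bigl(G;F\otimes(\CA^{\bar 2}_{\mathsf P}(\mathsf G)_\lambda)^{K_f}\bigr).
\]
Using the finiteness of the double coset set $\mathsf P(\A_f)\backslash\mathsf G(\A_f)/K_f$ (the flag variety $\mathsf P\backslash\mathsf G$ being projective), restriction to $G$ gives a $G$-equivariant isomorphism
\[
 (\CA^{\bar 2}_{\mathsf P}(\mathsf G)_\lambda)^{K_f}\big|_G\;\cong\;\bigoplus_{j}\Ind_{P}^{G}\bigl((\CA^{\bar 2}(\mathsf L_\mathsf P)_\lambda)^{K_{\mathsf P,f}^{(j)}}\bigr),
\]
where $K_{\mathsf P,f}^{(j)}\subset\mathsf L_\mathsf P(\A_f)$ are the open compact subgroups attached to a set of double coset representatives; here one uses that the modular character of $\mathsf P(\A)$ restricts to that of $P$, so the two normalizations of induction match.

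Next, each $(\CA^{\bar 2}(\mathsf L_\mathsf P)_\lambda)^{K_{\mathsf P,f}^{(j)}}$ is a quasi Casselman-Wallach representation of $L_\mathsf P$, i.e.\ the filtered union of its Casselman-Wallach subrepresentations $(\CA^{\bar 2}(\mathsf L_\mathsf P)_\lambda)^{K_{\mathsf P,f}^{(j)},\,\mathfrak J}$ cut out by finite-codimensional ideals $\mathfrak J\subset\oZ(\g_\C)$. Since $\Ind_P^G$ commutes with such filtered unions ($P\backslash G$ being compact), and cohomology commutes with them as before, it suffices to prove $\oH^i_{\mathrm{ct}}(G;F\otimes\Ind_P^G\tau)=0$ for $i<q_{G,F}$ whenever $\tau$ is a finite-length Casselman-Wallach subrepresentation of $\CA^{\bar 2}(\mathsf L_\mathsf P)_\lambda$. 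Choosing a composition series of $\tau$, applying the exact functor $F\otimes\Ind_P^G(-)$, and chasing the resulting long exact sequences in continuous cohomology, I reduce to the case $\tau=\sigma$ irreducible, occurring as an $L_\mathsf P$-subquotient of $\CA^{\bar 2}(\mathsf L_\mathsf P)_\lambda$ (the hypothesis $\Re(\lambda)\in\overline{\breve\a_{\mathsf P}^+}$ being untouched throughout). This is exactly the content of Lemma \ref{vaind}, and the proof is complete.

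The main obstacle is the bookkeeping in the second paragraph: identifying correctly the restriction to $G$ of the $K_f$-fixed vectors of the adelic induced representation as a finite direct sum of $\Ind_P^G$ of explicit representations of $L_\mathsf P$, matching the induction normalizations, and verifying that all the colimits involved are exact and compatible both with $\Ind_P^G$ and with $\oH^\bullet_{\mathrm{ct}}(G;F\otimes-)$. Once that reduction is in place, everything is formal dévissage together with the already-established Lemma \ref{vaind}.
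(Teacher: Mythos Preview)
Your proposal is correct and follows essentially the same strategy as the paper: reduce, via the isomorphism \eqref{ind2} and a d\'evissage through finite-length pieces, to the irreducible case covered by Lemma \ref{vaind}. The paper's proof is terser---it simply asserts a filtration of $\CA^{\bar 2}_{\mathsf P}(\mathsf G)_\lambda$ whose graded pieces are direct sums of $\Ind_{\mathsf P(\A)}^{\mathsf G(\A)}\sigma$ with $\sigma$ irreducible for $\mathsf L_\mathsf P(\A)$ and then invokes Lemma \ref{vaind}---whereas you spell out the passage from the adelic induction to the real induction $\Ind_P^G$ (via $K_f$-invariants, the finite double coset set, and matching of normalizations), as well as the compatibility of $\oH^\bullet_{\mathrm{ct}}(G;F\otimes -)$ with the relevant filtered colimits. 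This extra bookkeeping is exactly what is needed to connect the adelic statement to the real-group hypothesis of Lemma \ref{vaind}, so your version is in fact the more complete of the two.
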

\begin{proof}
Note that $\CA^{\bar 2}_{\mathsf P}(\mathsf G)_\lambda$ has a filtration $V_0\subset V_1\subset V_2\subset \cdots$ such that every associated graded representation is a direct sum of representations of the form
\[
 \Ind_{\mathsf P(\A)}^{\mathsf G(\A)} \sigma,
\]
where  $\sigma$ is an irreducible $\mathsf L_\mathsf P(\A)$-subrepresentation of $\CA^{\bar 2}(\mathsf L_{\mathsf P})_\lambda$. Thus the lemma follows from Lemma \ref{vaind}.
\end{proof}

\begin{lem}\label{val2234}
For all $i\in \Z$, the natural linear map
\[
  \oH_\mathrm{ct}^i( G;  F\otimes \CA^{2}(\mathsf G))\rightarrow  \oH_\mathrm{ct}^i(G;  F\otimes \CA^{\bar 2}(\mathsf G))
\]
is injective, and it is an isomorphism if $i<q_{G, F}$.
\end{lem}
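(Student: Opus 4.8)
The plan is to derive the assertion from the Eisenstein decomposition \eqref{eis4} together with Lemma \ref{val2}. First I would read \eqref{eis4} as a splitting
\[
  \CA^{\bar 2}(\mathsf G)\;\cong\;\CA^{2}(\mathsf G)\oplus W,\qquad W:=\bigoplus_{\{\mathsf P\},\,\mathsf P\neq\mathsf G}\bigl(\CA^{2}_{\mathsf P}(\mathsf G)_{\mathrm{Im}}\bigr)^{\Aut_{\mathscr P}(\mathsf P)},
\]
of quasi Casselman-Wallach representations of $\mathsf G(\A)$, hence in particular of $G$, in which the natural inclusion $\CA^{2}(\mathsf G)\hookrightarrow\CA^{\bar 2}(\mathsf G)$ inducing the map in the statement is precisely the inclusion of the first summand---it corresponds in the construction of \eqref{eis2}--\eqref{eis4} to the Eisenstein datum $\mathsf P'=\mathsf G$, whose Eisenstein series is the identity inclusion $\CA^{2}(\mathsf G)\hookrightarrow\CA(\mathsf G)$---and it admits the $G$-equivariant continuous retraction given by projection onto the first summand. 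Tensoring with $F$ and applying $\oH^{i}_{\mathrm{ct}}(G;-)$, which is additive on this finite direct sum, identifies the map in the statement with the split inclusion of $\oH^{i}_{\mathrm{ct}}(G;F\otimes\CA^{2}(\mathsf G))$ into $\oH^{i}_{\mathrm{ct}}(G;F\otimes\CA^{2}(\mathsf G))\oplus\oH^{i}_{\mathrm{ct}}(G;F\otimes W)$. Injectivity for all $i$ is then automatic, and the isomorphism assertion reduces to showing $\oH^{i}_{\mathrm{ct}}(G;F\otimes W)=0$ for all $i<q_{G,F}$.

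To prove this, fix a standard parabolic $\mathsf P\neq\mathsf G$. Since $\Aut_{\mathscr P}(\mathsf P)$ is a finite group acting on $\CA^{2}_{\mathsf P}(\mathsf G)_{\mathrm{Im}}$ through the $\mathsf G(\A)$-equivariant continuous intertwining operators $M_{w}$ of \eqref{mw}, the averaging idempotent $\frac{1}{\abs{\Aut_{\mathscr P}(\mathsf P)}}\sum_{w}M_{w}$ exhibits $(\CA^{2}_{\mathsf P}(\mathsf G)_{\mathrm{Im}})^{\Aut_{\mathscr P}(\mathsf P)}$ as a $G$-direct summand of $\CA^{2}_{\mathsf P}(\mathsf G)_{\mathrm{Im}}$, so it suffices to prove $\oH^{i}_{\mathrm{ct}}(G;F\otimes\CA^{2}_{\mathsf P}(\mathsf G)_{\mathrm{Im}})=0$ for $i<q_{G,F}$. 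Now $\CA^{2}_{\mathsf P}(\mathsf G)_{\mathrm{Im}}=\bigoplus_{\lambda}\CA^{2}_{\mathsf P}(\mathsf G)_{\lambda}$, the sum running over those $\lambda\in\breve\a_{\mathsf P,\C}$ with $\Re(\lambda|_{\a_{\mathsf P}^{\mathsf G}})=0$, and I would check that each such $\lambda$ meets the hypotheses of Lemma \ref{val2}: for such a $\lambda$ one has $\Re(\lambda)\in\breve\a_{\mathsf G}\subseteq\breve\a_{\mathsf P}$, and every coroot $\alpha^{\vee}$ with $\alpha\in\Delta_{\mathsf P}^{+}$ is a combination of simple coroots, so $\la\Re(\lambda),\alpha^{\vee}\ra=0$ and therefore $\Re(\lambda)\in\overline{\breve\a_{\mathsf P}^{+}}$; moreover $\CA^{2}_{\mathsf P}(\mathsf G)_{\lambda}$ is a $G$-direct summand of $\CA^{\bar 2}_{\mathsf P}(\mathsf G)_{\lambda}$, because applying \eqref{eis4} with $\mathsf L_{\mathsf P}$ in place of $\mathsf G$ and taking $\a_{\mathsf P}$-generalized eigenspaces presents $\CA^{\bar 2}(\mathsf L_{\mathsf P})_{\lambda}$ as a direct sum having $\CA^{2}(\mathsf L_{\mathsf P})_{\lambda}$ as a summand, and inducing via \eqref{ind2} preserves this. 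Lemma \ref{val2} then yields $\oH^{i}_{\mathrm{ct}}(G;F\otimes\CA^{2}_{\mathsf P}(\mathsf G)_{\lambda})=0$ for $i<q_{G,F}$.

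Finally I would assemble the pieces. By Theorem \ref{fr}, for every smooth representation $V$ one has $\oH^{i}_{\mathrm{ct}}(G;F\otimes V)=\oH^{i}(\g_{\C},K;F\otimes V)$, computed by the complex $\Hom_{K}(\wedge^{\bullet}(\g_{\C}/\k_{\C}),F\otimes V)$; since $\Hom_{K}$ out of a fixed finite-dimensional space commutes with direct sums, $\oH^{i}_{\mathrm{ct}}(G;-)$ commutes with the direct sum over $\lambda$, whence $\oH^{i}_{\mathrm{ct}}(G;F\otimes\CA^{2}_{\mathsf P}(\mathsf G)_{\mathrm{Im}})=0$ for $i<q_{G,F}$, hence so is the cohomology of its direct summand $(\CA^{2}_{\mathsf P}(\mathsf G)_{\mathrm{Im}})^{\Aut_{\mathscr P}(\mathsf P)}$, and finally---taking the finite direct sum over the associated classes $\{\mathsf P\}$ with $\mathsf P\neq\mathsf G$---that of $W$. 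The step I expect to require the most care is making the two ``direct summand'' reductions, for $\CA^{2}\subseteq\CA^{\bar 2}$ at the level of $\mathsf G$ and at the level of $\mathsf L_{\mathsf P}$, precise and compatible with the $\a_{\mathsf P}$-generalized eigenspace decompositions and with the merely quasi-complete, possibly non-Hausdorff locally convex topologies in play; the rest is formal given Theorem \ref{thmbar2}, Theorem \ref{fr}, Lemma \ref{val2}, and the framework already set up.
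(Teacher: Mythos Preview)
Your proposal is correct and follows the same route as the paper, which compresses the entire argument into the single sentence ``In view of \eqref{eis4}, this is implied by Lemma \ref{val2}.'' You have simply unpacked what that sentence means: the Eisenstein splitting \eqref{eis4} gives injectivity and reduces the isomorphism claim to the vanishing of $\oH^{i}_{\mathrm{ct}}(G;F\otimes(\CA^{2}_{\mathsf P}(\mathsf G)_{\mathrm{Im}})^{\Aut_{\mathscr P}(\mathsf P)})$ for $\mathsf P\neq\mathsf G$, and that vanishing follows from Lemma \ref{val2} after observing that the relevant $\lambda$ have $\Re(\lambda)\in\breve\a_{\mathsf G}\subset\overline{\breve\a_{\mathsf P}^{+}}$ and that $\CA^{2}_{\mathsf P}(\mathsf G)_{\lambda}$ is a $G$-summand of $\CA^{\bar 2}_{\mathsf P}(\mathsf G)_{\lambda}$.
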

\begin{proof}
In view of \eqref{eis4}, this is implied by Lemma \ref{val2}.
\end{proof}

\begin{lem}\label{val2235}
The natural linear map
\[
  \oH_\mathrm{ct}^i( G;  F\otimes \CA^{\bar 2}(\mathsf G))\rightarrow  \oH_\mathrm{ct}^i(G;  F\otimes \CA(\mathsf G))
\]
is injective if $i\leq q_{G, F}$, and is bijective if $i<q_{G,F}$. 

\end{lem}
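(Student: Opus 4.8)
The plan is to exploit Franke's filtration (Theorem \ref{frf}) of $\CA(\mathsf G)$ together with Lemma \ref{infifil}, which tells us this filtration is finite, and to run an induction on the steps of the filtration. First I would fix a character $\chi:\oZ(\g_\C)\rightarrow\C$ and observe that it suffices to prove the statement with $\CA(\mathsf G)$ replaced by $\CA(\mathsf G)_\chi$, since both $\CA^{\bar 2}(\mathsf G)$ and $\CA(\mathsf G)$ decompose as direct sums over $\chi$ and continuous cohomology commutes with (at least finite, and here effectively finite) direct sums; moreover only finitely many $\chi$ contribute a nonzero $F\otimes(-)$ cohomology, because $F\otimes \pi$ can have nonzero $(\g_\C,K)$-cohomology only when the infinitesimal character of $\pi$ matches that of $F^\vee$. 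So I work inside $\CA(\mathsf G)_\chi$, with its increasing filtration $\CA(\mathsf G)_{\chi,[\le t]}$ indexed by the finite set $T_\chi$, and with $\CA(\mathsf G)_{\chi,[\le 0]}=\CA^{\bar 2}(\mathsf G)_\chi$.

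Next I would set up the induction. For consecutive values $t'<t$ in $T_\chi$ we have a short exact sequence of quasi Casselman-Wallach representations
\[
  0\rightarrow \CA(\mathsf G)_{\chi,[\le t']}\rightarrow \CA(\mathsf G)_{\chi,[\le t]}\rightarrow \CA(\mathsf G)_{\chi,[t]}\rightarrow 0,
\]
tensoring with the finite-dimensional $F$ keeps it exact, and the long exact sequence in continuous cohomology (for which I invoke the general formalism of \cite{HM}, valid for these representations via the $V^{\mathfrak J}$ exhaustion) reduces the claim to showing that the graded piece $\oH^i_\mathrm{ct}(G; F\otimes \CA(\mathsf G)_{\chi,[t]})$ vanishes for $i<q_{G,F}$ whenever $t>0$. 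By Theorem \ref{frf}, $\CA(\mathsf G)_{[t]}$ — and hence its $\chi$-part — is a direct sum of terms $\CA^{\bar 2}_\mathsf P(\mathsf G)_\lambda$ with $\mathsf P\in\mathscr P$, $\Re(\lambda)\in\breve\a_\mathsf P^+$ and $\la\Re(\lambda),\rho_0^\vee\ra=t>0$; in particular $\mathsf P\neq\mathsf G$ for every such term, since $t>0$ forces $\Re(\lambda)\ne 0$, which is impossible for $\mathsf P=\mathsf G$ (where $\lambda$ lives in $\breve\a_{\mathsf G,\C}$ and the automorphic/unitary central character condition pins down its real part). Now $\Re(\lambda)\in\breve\a_\mathsf P^+\subset\overline{\breve\a_\mathsf P^+}$, so Lemma \ref{val2} applies to each summand and gives $\oH^i_\mathrm{ct}(G;F\otimes\CA^{\bar 2}_\mathsf P(\mathsf G)_\lambda)=0$ for $i<q_{G,F}$; summing over the finitely many summands (again cohomology commutes with the relevant direct sums) yields the vanishing of the graded piece. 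Feeding this back through the long exact sequences, one gets by induction on $t\in T_\chi$ that the map $\oH^i_\mathrm{ct}(G;F\otimes\CA^{\bar 2}(\mathsf G)_\chi)\rightarrow\oH^i_\mathrm{ct}(G;F\otimes\CA(\mathsf G)_{\chi,[\le t]})$ is injective for $i\le q_{G,F}$ and bijective for $i<q_{G,F}$ (the injectivity at $i=q_{G,F}$ uses the vanishing of the graded piece in degree $q_{G,F}-1$... wait, more precisely it uses that the connecting map lands in a vanishing group and that the previous graded piece vanishes in degree $<q_{G,F}$, which covers degree $q_{G,F}$ injectivity). Taking $t$ past $\max T_\chi$ identifies $\CA(\mathsf G)_{\chi,[\le t]}$ with all of $\CA(\mathsf G)_\chi$ by Lemma \ref{infifil}, and reassembling over $\chi$ gives the statement.

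The main obstacle I anticipate is bookkeeping around the categorical subtleties of continuous cohomology for these infinite-dimensional, non-admissible quasi Casselman-Wallach representations: one must be sure that the long exact sequence is available for the short exact sequences above (the quotient $\CA(\mathsf G)_{\chi,[t]}$ is again quasi Casselman-Wallach, which is where Theorem \ref{frf} is essential), that continuous cohomology commutes with the direct sum decompositions used (over $\chi$, and over the summands in Theorem \ref{frf}), and that the topologies are handled so that "injective" and "isomorphism" are meaningful statements of topological vector spaces. Once those functorial facts are in place — and they are exactly the sort of thing set up in \cite{HM} and used implicitly in Lemmas \ref{val2234} and \ref{val2235} — the argument is a clean dévissage. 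A minor point to get right is the degree bookkeeping: injectivity at $i=q_{G,F}$ in the conclusion requires vanishing of the graded cohomology in degrees $<q_{G,F}$ only, which is exactly what Lemma \ref{val2} supplies, so no extra hypothesis is needed there.

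\begin{proof}
We argue by dévissage along Franke's filtration. Decomposing with respect to the generalized infinitesimal character, it suffices to prove, for each character $\chi:\oZ(\g_\C)\rightarrow\C$, that the natural map
\[
  \oH^i_\mathrm{ct}(G; F\otimes \CA^{\bar 2}(\mathsf G)_\chi)\rightarrow \oH^i_\mathrm{ct}(G; F\otimes \CA(\mathsf G)_\chi)
\]
is injective for $i\le q_{G,F}$ and bijective for $i<q_{G,F}$; summing over $\chi$ then gives the claim, noting that $F\otimes(-)$-cohomology vanishes for all but finitely many $\chi$.

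Fix $\chi$. By Lemma \ref{infifil} the increasing filtration $\{\CA(\mathsf G)_{\chi,[\le t]}\}_{t\ge 0}$ of $\CA(\mathsf G)_\chi$ takes only the finitely many distinct values indexed by $t\in T_\chi$, with $\CA(\mathsf G)_{\chi,[\le 0]}=\CA^{\bar 2}(\mathsf G)_\chi$ and $\CA(\mathsf G)_{\chi,[\le t]}=\CA(\mathsf G)_\chi$ for $t\ge\max T_\chi$. We prove by induction on $t\in T_\chi$ that the map
\be\label{steptmap}
  \oH^i_\mathrm{ct}(G; F\otimes \CA^{\bar 2}(\mathsf G)_\chi)\rightarrow \oH^i_\mathrm{ct}(G; F\otimes \CA(\mathsf G)_{\chi,[\le t]})
\ee
is injective for $i\le q_{G,F}$ and bijective for $i<q_{G,F}$. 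For $t=0$ this is the identity map. For the inductive step, let $t'<t$ be consecutive elements of $T_\chi$. By Theorem \ref{frf} the quotient
\[
  \CA(\mathsf G)_{\chi,[t]}=\frac{\CA(\mathsf G)_{\chi,[\le t]}}{\CA(\mathsf G)_{\chi,[\le t']}}
\]
is isomorphic, as a quasi Casselman-Wallach representation, to a direct sum of representations $\CA^{\bar 2}_\mathsf P(\mathsf G)_\lambda$ with $\mathsf P\in\mathscr P$, $\Re(\lambda)\in\breve\a_\mathsf P^+$ and $\la\Re(\lambda),\rho_0^\vee\ra=t>0$. In each such summand $\mathsf P\ne\mathsf G$: indeed for $\mathsf P=\mathsf G$ one has $\breve\a_\mathsf G^+=\{0\}$, forcing $t=0$. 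Since $\Re(\lambda)\in\breve\a_\mathsf P^+\subset\overline{\breve\a_\mathsf P^+}$ and $\mathsf P\ne\mathsf G$, Lemma \ref{val2} yields
\[
  \oH^i_\mathrm{ct}(G; F\otimes \CA^{\bar 2}_\mathsf P(\mathsf G)_\lambda)=0\quad\textrm{for all }i<q_{G,F},
\]
and as continuous cohomology commutes with the (finite, after tensoring with $F$ and passing to each $\oZ(\g_\C)$-generalized eigenspace) direct sums occurring here, we conclude
\be\label{gradedvanish}
  \oH^i_\mathrm{ct}(G; F\otimes \CA(\mathsf G)_{\chi,[t]})=0\quad\textrm{for all }i<q_{G,F}.
\ee
The short exact sequence
\[
  0\rightarrow \CA(\mathsf G)_{\chi,[\le t']}\rightarrow \CA(\mathsf G)_{\chi,[\le t]}\rightarrow \CA(\mathsf G)_{\chi,[t]}\rightarrow 0
\]
of quasi Casselman-Wallach representations, tensored with $F$, gives a long exact sequence in continuous cohomology. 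Combining \eqref{gradedvanish} with the inductive hypothesis for $t'$ shows that \eqref{steptmap} for $t'$ maps isomorphically onto \eqref{steptmap} for $t$ in degrees $i<q_{G,F}$, and injectively in degree $i=q_{G,F}$ (the connecting map into $\oH^{q_{G,F}}_\mathrm{ct}(G; F\otimes\CA(\mathsf G)_{\chi,[\le t']})$ has image inside that of \eqref{steptmap} for $t'$, while \eqref{gradedvanish} kills the obstruction to injectivity). This completes the induction, and taking $t\ge\max T_\chi$ proves the desired statement for $\CA(\mathsf G)_\chi$; summing over $\chi$ finishes the proof.
\end{proof}
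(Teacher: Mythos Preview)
Your proposal is correct and follows exactly the approach the paper has in mind: the paper's one-line proof ``This follows by Theorem \ref{frf}, Lemma \ref{infifil} and Lemma \ref{val2}'' is precisely the d\'evissage along Franke's filtration that you have spelled out. One small slip: your claim that $\breve\a_\mathsf G^+=\{0\}$ is not literally correct (since $\Delta_\mathsf G^+=\emptyset$ the defining condition is vacuous, so $\breve\a_\mathsf G^+=\breve\a_\mathsf G$), but the conclusion $t=0$ for $\mathsf P=\mathsf G$ still holds because $\rho_0^\vee\in\a_0^\mathsf G$ pairs trivially with $\breve\a_\mathsf G$.
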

\begin{proof}

This follows by Theorem \ref{frf}, Lemma \ref{infifil}
and  Lemma \ref{val2}.
\end{proof}

Combining Lemmas \ref{val2234} and \ref{val2235}, we get the following result.
\begin{prpl}\label{val223t}
The natural linear map
\[
  \oH_\mathrm{ct}^i( G;  F\otimes \CA^{2}(\mathsf G))\rightarrow  \oH_\mathrm{ct}^i(G;  F\otimes \CA(\mathsf G))
\]
is injective if $i\leq q_{G, F}$, and is bijective if $i<q_{G,F}$. 

\end{prpl}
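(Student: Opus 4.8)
The statement to prove is Proposition \ref{val223t}: the natural map
\[
  \oH_\mathrm{ct}^i( G;  F\otimes \CA^{2}(\mathsf G))\rightarrow  \oH_\mathrm{ct}^i(G;  F\otimes \CA(\mathsf G))
\]
is injective for $i\le q_{G,F}$ and bijective for $i<q_{G,F}$. The plan is to factor this map through the space of almost square-integrable automorphic forms, writing it as the composition
\[
  \oH_\mathrm{ct}^i( G;  F\otimes \CA^{2}(\mathsf G))\to \oH_\mathrm{ct}^i( G;  F\otimes \CA^{\bar 2}(\mathsf G))\to \oH_\mathrm{ct}^i(G;  F\otimes \CA(\mathsf G)),
\]
which makes sense because $\CA^2(\mathsf G)\subset\CA^{\bar 2}(\mathsf G)\subset\CA(\mathsf G)$ are $G$-subrepresentations and the composite of the two inclusion-induced maps is the inclusion-induced map in the statement. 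Then I would simply invoke Lemma \ref{val2234} for the first arrow (injective always, bijective for $i<q_{G,F}$) and Lemma \ref{val2235} for the second arrow (injective for $i\le q_{G,F}$, bijective for $i<q_{G,F}$).

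Composing: for $i\le q_{G,F}$ the first map is injective (it is injective for all $i$ by Lemma \ref{val2234}) and the second is injective by Lemma \ref{val2235}, so the composition is injective. For $i<q_{G,F}$ both maps are isomorphisms, hence so is the composition. This gives exactly the two assertions of the proposition, and no further work is needed.

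The only genuine point to check is the compatibility of the factorization: one must verify that the map in Proposition \ref{val223t} really is the composition of the two maps appearing in Lemmas \ref{val2234} and \ref{val2235}, i.e. that all three arise by functoriality of $\oH^i_\mathrm{ct}(G; F\otimes-)$ applied to the evident chain of $G$-equivariant inclusions $\CA^2(\mathsf G)\hookrightarrow\CA^{\bar 2}(\mathsf G)\hookrightarrow\CA(\mathsf G)$. This is immediate from the functoriality of continuous cohomology and the fact that the inclusion $\CA^2(\mathsf G)\hookrightarrow\CA(\mathsf G)$ factors through $\CA^{\bar 2}(\mathsf G)$. So there is no real obstacle here — the substantive content is entirely contained in Lemmas \ref{val2234} and \ref{val2235}, which in turn rest on Lemma \ref{val2}, Franke's results (Theorems \ref{thmbar2} and \ref{frf}), and the finiteness of the filtration (Lemma \ref{infifil}). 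The proof of the proposition itself is a one-line diagram chase.
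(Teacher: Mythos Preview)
Your proposal is correct and matches the paper's own proof exactly: the paper simply states that the proposition follows by combining Lemmas \ref{val2234} and \ref{val2235}, which is precisely the factorization through $\CA^{\bar 2}(\mathsf G)$ that you describe. The compatibility check you mention is indeed routine functoriality, and the paper does not elaborate on it either.
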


\subsection{Automorphic forms on $\Gamma\backslash {^\circ \!G} $}

Recall from \eqref{decomg} the decomposition 
\[
   G=A_\mathsf G \times  {^\circ \!G}.
\]
Let $\Gamma$ be an arithmetic   subgroup of $\mathsf G(\BQ)$. Then $\Gamma \subset {^\circ \!G}$ and
\[
  \Gamma\backslash G=A_\mathsf G\times ( \Gamma\backslash {^\circ \!G}).
\] 
As in Section \ref{sautof} , we say that a smooth function on $\Gamma\backslash {^\circ \!G}$ is a smooth automorphic form if it is $\oZ({^\circ \!\g_\C})$-finite, and is  uniformly of moderate growth. Denote by $\CA(\Gamma\backslash {^\circ \!G})$ the space of smooth automorphic forms on $\Gamma\backslash {^\circ \!G}$. As in Section \ref{sautof}, it is a quasi Casselman-Wallach representation of ${^\circ \!G}$. 

Write 
\[
\CA(A_\mathsf G):=\{\phi\in \con^\infty(A_\mathsf G)\, :\, \phi \textrm{ is  $A_\mathsf G$-finite}  \}, 
\] 
where $ \phi$  is  $A_\mathsf G$-finite means that $\{ \phi(\,\cdot\, a)\}_{a\in A_\mathsf G}$ spans a finite dimensional subspace of $\con^\infty(A_\mathsf G)$. This is a quasi-Cassleman-Wallach representation of $A_\mathsf G$ under the right translations, and under the finest locally convex topology. 

It is clear that 
\[
   \CA(\Gamma\backslash G)=\CA(A_\mathsf G)\otimes \CA(\Gamma\backslash {^\circ \!G}).
\]
Fix a ${^\circ \!G}$-invariant positive Borel measure on  $\Gamma\backslash {^\circ \!G}$. It has finite total volume. 
Write $\CA^2(\Gamma\backslash {^\circ \!G})\subset \CA(\Gamma\backslash {^\circ \!G})$ for the subspace of square integrable functions with respect to this invariant measure, and write
\[
  \CA^2(\Gamma\backslash G)=\CA(A_\mathsf G)\otimes \CA^2(\Gamma\backslash {^\circ \!G})\subset  \CA(\Gamma\backslash G).
\]

\begin{lem}\label{l2c}
By restriction of continuous cohomology, the restriction map
\[
 \CA(\Gamma\backslash G)\rightarrow \CA(\Gamma\backslash {^\circ \!G})
\]
induces a linear isomorphism
\[
  \oH_{\mathrm{ct}}^i(G; F\otimes \CA(\Gamma\backslash G))\cong \oH_{\mathrm{ct}}^i({^\circ \!G}; F\otimes \CA(\Gamma\backslash {^\circ \!G})),\quad (i\in \Z). 
\]
Likewise the restriction map
\[
 \CA^2(\Gamma\backslash G)\rightarrow \CA^2(\Gamma\backslash {^\circ \!G})
\]
induces a linear isomorphism
\[
  \oH_{\mathrm{ct}}^i(G; F\otimes \CA^2(\Gamma\backslash G))\cong \oH_{\mathrm{ct}}^i({^\circ \!G}; F\otimes \CA^2(\Gamma\backslash {^\circ \!G})),\quad (i\in \Z).
\]
\end{lem}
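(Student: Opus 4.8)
The plan is to pass from continuous cohomology to relative Lie algebra cohomology and then to peel off the contribution of the central split torus $A_\mathsf G$ by a Koszul (spectral sequence) argument.

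By Theorem \ref{fr}, applied to the smooth representations $\CA(\Gamma\backslash G)$ and $\CA(\Gamma\backslash {^\circ \!G})$ (they are quasi Casselman-Wallach) of the groups $G$ and ${^\circ \!G}$, which have finitely many connected components, the two sides of the asserted isomorphism are $\oH^i(\g_\C, K; F\otimes\CA(\Gamma\backslash G))$ and $\oH^i({^\circ \!\g_\C}, K; F\otimes\CA(\Gamma\backslash {^\circ \!G}))$ respectively. Put $V:=F\otimes\CA(\Gamma\backslash G)$. Since $\g_\C=\a_{\mathsf G,\C}\oplus {^\circ \!\g_\C}$ with $\a_{\mathsf G,\C}$ central, and $\k_\C\subset {^\circ \!\g_\C}$ (because $K\subset {^\circ \!G}$), the Chevalley-Eilenberg complex $\Hom_K(\wedge^\bullet(\g_\C/\k_\C),V)$ is a first-quadrant double complex with $(p,q)$-term $\wedge^p\breve\a_{\mathsf G,\C}\otimes\Hom_K(\wedge^q({^\circ \!\g_\C}/\k_\C),V)$, the two differentials being the Koszul differential for the $\a_{\mathsf G,\C}$-action on $V$ and the relative Lie algebra differential for $({^\circ \!\g_\C},K)$. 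Because $A_\mathsf G$ is central and $F$ is irreducible, $A_\mathsf G$ acts on $F$ by a character $\xi$, so under $\CA(\Gamma\backslash G)=\CA(A_\mathsf G)\otimes\CA(\Gamma\backslash {^\circ \!G})$ the subalgebra $\a_{\mathsf G,\C}$ acts on $V$ only through the tensor factor $\xi\otimes\CA(A_\mathsf G)$. Finally, restricting an automorphic form on $\Gamma\backslash G=A_\mathsf G\times(\Gamma\backslash {^\circ \!G})$ to $\{1\}\times(\Gamma\backslash {^\circ \!G})$ is the map $\mathrm{ev}_1\otimes\mathrm{id}$, where $\mathrm{ev}_1\colon\CA(A_\mathsf G)\to\C$ is evaluation at the identity; hence on Chevalley-Eilenberg complexes the map of the lemma is: project to the column $p=0$, then apply $\mathrm{id}_F\otimes\mathrm{ev}_1\otimes\mathrm{id}$ to the coefficients.

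The key input is that $\oH^p(\a_{\mathsf G,\C};\xi\otimes\CA(A_\mathsf G))$ vanishes for $p>0$ and is one-dimensional for $p=0$, with $\mathrm{ev}_1$ inducing an isomorphism of this degree-zero group onto $\C$. To prove this, decompose $\CA(A_\mathsf G)$ into generalized $\a_{\mathsf G,\C}$-eigenspaces; the $\mu$-eigenspace consists of the exponential polynomials $p(\log a)\,a^\mu$, on which $\a_{\mathsf G,\C}$ acts by $\mu$ plus a locally nilpotent operator. After the twist by $\xi$ the summand indexed by $\mu$ therefore contributes nothing to the Koszul cohomology unless $\mu+d\xi=0$ (for $\mu+d\xi\neq 0$ some element of $\a_{\mathsf G,\C}$ acts invertibly), whereas the summand with $\mu=-d\xi$ is $\Sym(\breve\a_{\mathsf G,\C})$ with $\a_{\mathsf G,\C}$ acting by directional derivatives, whose Koszul cohomology is, by a rank-one computation together with the K\"unneth formula, equal to $\C$ (the constants) in degree $0$ and $0$ in positive degrees. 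The surviving degree-zero class is represented by the nowhere-vanishing function $a\mapsto\xi(a)^{-1}\in\CA(A_\mathsf G)$, on which $\mathrm{ev}_1$ is nonzero; this yields the statement about $\mathrm{ev}_1$.

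Finally, one runs the spectral sequence of the double complex obtained by first taking cohomology in the Koszul direction. Since $\a_{\mathsf G,\C}$ acts trivially on $\wedge^q({^\circ \!\g_\C}/\k_\C)$ and on $\CA(\Gamma\backslash {^\circ \!G})$, and $\Hom_K(\wedge^q({^\circ \!\g_\C}/\k_\C),-)$ is exact, the previous paragraph (with the K\"unneth formula in the $A_\mathsf G$-variable) shows that the $E_1$-page is concentrated in the column $p=0$, where it is the Chevalley-Eilenberg complex for $({^\circ \!\g_\C},K)$ with coefficients in $F\otimes\CA(\Gamma\backslash {^\circ \!G})$, the identification being $\mathrm{id}_F\otimes\mathrm{ev}_1\otimes\mathrm{id}$ on the surviving $\a_{\mathsf G,\C}$-cohomology. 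Hence the spectral sequence degenerates, and the resulting isomorphism $\oH^i(\g_\C,K;V)\cong\oH^i({^\circ \!\g_\C},K;F\otimes\CA(\Gamma\backslash {^\circ \!G}))$ is precisely the edge map, which by the previous paragraph is the restriction map. This proves the first isomorphism; the second follows by repeating the argument with $\CA^2(\Gamma\backslash {^\circ \!G})$ in place of $\CA(\Gamma\backslash {^\circ \!G})$ and with $\CA^2(\Gamma\backslash G)=\CA(A_\mathsf G)\otimes\CA^2(\Gamma\backslash {^\circ \!G})$, the torus factor $\CA(A_\mathsf G)$ being unchanged. I expect the main point requiring care to be the bookkeeping identifying the edge homomorphism of the spectral sequence with the restriction map, that is, checking that the $\mathrm{ev}_1$-identification is compatible with the $\a_{\mathsf G,\C}$-cohomology identification; the topological subtleties are harmless, since only a linear isomorphism is claimed.
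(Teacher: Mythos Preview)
Your proof is correct and follows the same strategy as the paper: both reduce to the K\"unneth decomposition for $G=A_{\mathsf G}\times{^\circ \!G}$ and the computation $\oH^p_{\mathrm{ct}}(A_{\mathsf G};\chi_F\otimes\CA(A_{\mathsf G}))=\C$ for $p=0$ and $0$ otherwise. The paper simply invokes the K\"unneth formula and states this computation, whereas you unpack the K\"unneth formula as a double-complex spectral sequence on the Chevalley--Eilenberg side and carry out the Koszul computation explicitly; your extra care in identifying the edge map with the restriction map is a detail the paper leaves implicit.
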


\begin{proof}
The group $A_\mathsf G$ acts on $F$ through a character which we denote by $\chi_F$. Using the K\"{u}nneth formula, the lemma then follow form the fact that
\[
  \oH_{\mathrm{ct}}^i(A_\mathsf G; \chi_F\otimes \CA(A_\mathsf G))=\left\{
                                                       \begin{array}{ll}
                                                         \C, & \hbox{if $i=0$;} \\
                                                         0, & \hbox{otherwise.}
                                                       \end{array}
                                                     \right.
\]
\end{proof}

\begin{prpl}\label{val223gg} 
Suppose that $\Gamma$ is a congruence subgroup of $\mathsf G(\BQ)$. 
Then the natural linear map
\[
  \oH_\mathrm{ct}^i(  {^\circ \!G};  F\otimes \CA^2(\Gamma\backslash {^\circ \!G}))\rightarrow  \oH_\mathrm{ct}^i( {^\circ \!G};  F\otimes \CA(\Gamma\backslash {^\circ \!G}))
\]
is injective if $i\leq q_{G, F}$, and is bijective if $i<q_{G,F}$. 

\end{prpl}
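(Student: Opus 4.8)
The plan is to deduce the proposition from its adelic counterpart, Proposition~\ref{val223t}, by realizing $\CA^2(\Gamma\backslash {^\circ \!G})\hookrightarrow \CA(\Gamma\backslash {^\circ \!G})$, after a finite averaging, as a $G$-equivariant topological direct summand of $\CA^2(\mathsf G)\hookrightarrow \CA(\mathsf G)$, and then invoking additivity of continuous cohomology together with Lemma~\ref{l2c}. First I would reduce to a subgroup of the form $\mathsf G(\BQ)\cap K_f$. Since $\Gamma$ is a congruence subgroup it contains, with finite index, a subgroup $\Gamma_0=\mathsf G(\BQ)\cap K_f$ for some open compact $K_f\subset \mathsf G(\A_f)$; replacing $K_f$ by the intersection of finitely many of its $\Gamma$-conjugates (still an open compact subgroup, because every element of $\Gamma_0$ normalizes $K_f$), I may assume that $\Gamma_0$ is normal in $\Gamma$. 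Put $Q:=\Gamma/\Gamma_0$, a finite group. It acts on $\CA(\Gamma_0\backslash {^\circ \!G})$ and on $\CA^2(\Gamma_0\backslash {^\circ \!G})$ by left translations, commuting with the right ${^\circ \!G}$-action, and the respective spaces of $Q$-invariants are $\CA(\Gamma\backslash {^\circ \!G})$ and $\CA^2(\Gamma\backslash {^\circ \!G})$. As $Q$ is finite and we work over $\C$, the functor of $Q$-invariants is effected by the continuous ${^\circ \!G}$-equivariant idempotent $\frac{1}{|Q|}\sum_{q\in Q}q$; it thus splits off a topological direct summand, and, decomposing a $Q$-module into its $Q$-isotypic components (and recalling that $F$ carries the trivial $Q$-action), it commutes with $\oH^{i}_{\mathrm{ct}}({^\circ \!G};F\otimes -)$. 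Since a $Q$-equivariant linear map which is injective (resp.\ bijective) stays injective (resp.\ bijective) on $Q$-invariants, it suffices to prove the proposition with $\Gamma_0$ in place of $\Gamma$.

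Next I would pass to the adele group. By finiteness of the class number we may write $\mathsf G(\A_f)=\bigsqcup_{j=1}^{m}\mathsf G(\BQ)\,g_j K_f$ with $g_1=e$, and then $\phi\mapsto(h\mapsto\phi(h,g_j))_{j}$ is a $G$-equivariant topological isomorphism $\CA(\mathsf G)^{K_f}\cong \bigoplus_{j=1}^{m}\CA(\Gamma_j\backslash G)$, where $\Gamma_j:=\mathsf G(\BQ)\cap g_jK_fg_j^{-1}$ and $\Gamma_1=\Gamma_0$; the same holds with $\CA^2$ in place of $\CA$, compatibly with the inclusion $\CA^2\hookrightarrow \CA$, and the $j=1$ summand has an evident $G$-equivariant splitting, obtained by extending a function on $\Gamma_0\backslash G$ by zero along the other clopen components. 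Moreover $\CA(\mathsf G)^{K_f}$, and likewise $\CA^2(\mathsf G)^{K_f}$, is a $G$-equivariant topological direct summand of $\CA(\mathsf G)$, cut out by the continuous $G$-equivariant idempotent $\phi\mapsto\int_{K_f}\phi(\,\cdot\,k)\,dk$, again compatibly with $\CA^2\hookrightarrow \CA$. Composing, $\CA^2(\Gamma_0\backslash G)\hookrightarrow \CA(\Gamma_0\backslash G)$ is a $G$-equivariant topological direct summand of $\CA^2(\mathsf G)\hookrightarrow \CA(\mathsf G)$.

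Finally, since $\oH^{i}_{\mathrm{ct}}(G;F\otimes -)$ is additive for finite topological direct sums, applying it to the decomposition just obtained exhibits the natural map $\oH^{i}_{\mathrm{ct}}(G;F\otimes \CA^2(\Gamma_0\backslash G))\to \oH^{i}_{\mathrm{ct}}(G;F\otimes \CA(\Gamma_0\backslash G))$ as a direct summand of the map $\oH^{i}_{\mathrm{ct}}(G;F\otimes \CA^2(\mathsf G))\to \oH^{i}_{\mathrm{ct}}(G;F\otimes \CA(\mathsf G))$. By Proposition~\ref{val223t} the latter is injective for $i\leq q_{G,F}$ and bijective for $i< q_{G,F}$, hence so is the former. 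By Lemma~\ref{l2c} this map is identified with $\oH^{i}_{\mathrm{ct}}({^\circ \!G};F\otimes \CA^2(\Gamma_0\backslash {^\circ \!G}))\to \oH^{i}_{\mathrm{ct}}({^\circ \!G};F\otimes \CA(\Gamma_0\backslash {^\circ \!G}))$, and the reduction of the first paragraph completes the argument.

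The step I expect to require the most care is the adelic realization in the second paragraph: one has to check that the classical-adelic dictionary genuinely matches the paper's space $\CA^2(\Gamma_j\backslash G)=\CA(A_\mathsf G)\otimes \CA^2(\Gamma_j\backslash {^\circ \!G})$ with the $K_f$-fixed vectors in $\CA^2(\mathsf G)$, reconciling the uniform moderate growth and square-integrability conditions and the bookkeeping attached to the central split torus $A_\mathsf G$, and that all the decompositions used are genuine topological direct sums of quasi Casselman-Wallach representations, so that continuous cohomology indeed splits along them. None of this is deep, but it is where the real work lies; once Proposition~\ref{val223t} and Lemma~\ref{l2c} are granted, the rest is formal.
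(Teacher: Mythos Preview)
Your argument is correct and follows essentially the same route as the paper's proof: reduce to $\Gamma=\mathsf G(\BQ)\cap K_f$, identify $(\CA^?(\mathsf G))^{K_f}$ with $\bigoplus_j \CA^?(\Gamma_j\backslash G)$, and read off the result for the $\Gamma$-summand from Proposition~\ref{val223t}, finishing with Lemma~\ref{l2c}. The paper simply writes ``without loss of generality'' for the first reduction and is terse about the passage from $\CA(\mathsf G)$ to its $K_f$-invariants, whereas you spell out the finite-quotient averaging and the $K_f$-averaging idempotents explicitly; this extra care is welcome but does not change the strategy.
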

\begin{proof}
Without loss of generality, assume that $\Gamma=\mathsf G(\BQ)\cap K_f$, where $K_f$ is an open compact subgroup of $\mathsf G(\A_f)$. Then
\[
 ( \CA(\mathsf G))^{K_f}=\bigoplus_{j=1}^k\CA(\Gamma_j\backslash G),
\]
where $k$ is the cardinality of the finite set $\mathsf G(\BQ)\backslash \mathsf G(\A_f)/K_f$, and $\Gamma_j$'s are certain congruence subgroups of $\mathsf G(\BQ)$ with $\Gamma_1=\Gamma$.  Moreover, 
\[
  (\CA^2(\mathsf G))^{K_f}=\bigoplus_{j=1}^k\CA^2(\Gamma_j\backslash G).
\]
Therefore, Proposition \ref{val223t} implies that the natural linear map
\[
  \oH_\mathrm{ct}^i( G;  F\otimes \CA^{2}(\Gamma\backslash G))\rightarrow  \oH_\mathrm{ct}^i(G;  F\otimes \CA(\Gamma\backslash G))
\]
is injective if $i\leq q_{G, F}$, and is bijective if $i<q_{G,F}$. The proposition then follows by using Lemma \ref{l2c}.

\end{proof}

\subsection{Cohomologies of congruence groups} 

Let $\Gamma$ be an arithmetic   subgroup of $\mathsf G(\BQ)$ as before. By Shapiro's Lemma, 
\be\label{shapiro}
  \oH^i(\Gamma;F)=\oH_{\mathrm{ct}}^i({^\circ \!G}, \Ind_\Gamma^{^\circ \!G} F)=\oH_{\mathrm{ct}}^i({^\circ \!G}, F\otimes \con^\infty(\Gamma\backslash {^\circ \!G})),\quad (i\in \Z) 
\ee
where the second  identification is induced by the isomorphism
\[
\begin{array}{rcl}
  F\otimes \con^\infty(\Gamma\backslash {^\circ \!G})&\rightarrow &\Ind_\Gamma^{^\circ \!G} F, \\
  v\otimes \phi&\mapsto & (g\mapsto \phi(g) g.v).
  \end{array}
\]

\begin{remarkl}
 The cohomology group $\oH^i(\Gamma;F)$  has a topological interpretation, as explained in what follows. 
 Set
\[
  X_\Gamma:= \Gamma\backslash G/K. 
\]
For every open subset $U$ of $X_\Gamma$, write
\[
  F_{[\Gamma]}(U):=\{\textrm{$\Gamma$-equivariant locally constant function $f: \tilde U\rightarrow F$}\},
\]
where $\tilde U$ denotes the pre-image of $U$ under the quotient map $G/K\rightarrow X_\Gamma$. Then $F_{[\Gamma]}$ is obviously a sheaf of complex vector spaces on $X_\Gamma$. 
Moreover, its sheaf cohomology agrees with the group cohomology, namely there is a vector space identification
\be\label{hgammaf1}
  \oH^i(\Gamma;F)=\oH^i(X_\Gamma;F_{[\Gamma]}).
\ee
See \cite[Chapter VII]{BW} for more details. 
\end{remarkl}

Recall the following important result of Franke \cite[Theorem 18]{Fr} (see also \cite[Section 13]{Sc}). 

\begin{thml}\label{frgk}
Suppose that $\Gamma$ is a congruence subgroup of $\mathsf G(\BQ)$. Then the embedding $\CA(\Gamma\backslash {^\circ \!G})\hookrightarrow \con^{\,\infty}(\Gamma\backslash {^\circ \!G})$ induces an identification
\[
  \oH_{\mathrm{ct}}^i({^\circ \!G}; F\otimes \CA(\Gamma\backslash {^\circ \!G}))=\oH_{\mathrm{ct}}^i({^\circ \!G}; F\otimes \con^{\,\infty}(\Gamma\backslash {^\circ \!G}))
\]
of vector spaces. 
\end{thml}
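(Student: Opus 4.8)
The plan is to follow Franke's strategy from \cite{Fr}: reduce the asserted identification to a comparison between automorphic forms and smooth functions in the adelic setting, and then to Franke's regularization of Eisenstein series.

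First I would pass to relative Lie algebra cohomology. By Theorem \ref{fr}, for both $V=\CA(\Gamma\backslash {^\circ \!G})$ and $V=\con^\infty(\Gamma\backslash {^\circ \!G})$ the group $\oH_{\mathrm{ct}}^i({^\circ \!G}; F\otimes V)$ is the cohomology of the standard complex $\Hom_K(\wedge^\bullet({^\circ \!\g_\C}/\k_\C),\, F\otimes V)$, so it suffices to show that the inclusion $\CA(\Gamma\backslash {^\circ \!G})\hookrightarrow \con^\infty(\Gamma\backslash {^\circ \!G})$ is a quasi-isomorphism for this complex. Then I would move to the adelic picture as in the proof of Proposition \ref{val223gg}: tensoring with $A_\mathsf G$ and reversing the K\"{u}nneth argument of Lemma \ref{l2c}, and using the decompositions $(\CA(\mathsf G))^{K_f}=\bigoplus_j \CA(\Gamma_j\backslash G)$ and $(\con^\infty(\mathsf G(\BQ)\backslash \mathsf G(\A)))^{K_f}=\bigoplus_j \con^\infty(\Gamma_j\backslash G)$ indexed by $\mathsf G(\BQ)\backslash \mathsf G(\A_f)/K_f$, the claim reduces to the statement that $\CA(\mathsf G)\hookrightarrow \con^\infty(\mathsf G(\BQ)\backslash \mathsf G(\A))$ induces an isomorphism on $(\g_\C, K)$-cohomology with coefficients in $F$.

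This last statement is the heart of the matter, and here I would invoke the machinery of \cite{Fr}. One starts from two reductions: the standard complex involves only the finitely many $K$-types occurring in $\wedge^\bullet({^\circ \!\g_\C}/\k_\C)\otimes F^\vee$; and, by Wigner's lemma, the cohomology is insensitive to any part on which $\oZ(\g_\C)$ acts without the generalized eigenvalue $\chi_0$ equal to the infinitesimal character of the contragredient $F^\vee$. The difficulty is that $\con^\infty(\mathsf G(\BQ)\backslash \mathsf G(\A))$ is neither $\oZ(\g_\C)$-finite nor of moderate growth, so one cannot merely project onto the $\chi_0$-part. Franke's device is to combine the Langlands spectral decomposition of the space of $L^2$ automorphic forms (discrete spectrum together with direct integrals of Eisenstein-induced representations along unitary parameters) with a regularization of Eisenstein series, which rewrites any cocycle built from a uniformly moderate growth smooth function, modulo a cohomologically negligible remainder, in terms of residues and parameter-derivatives of cuspidal Eisenstein series, i.e.\ in terms of elements of $\CA(\mathsf G)$. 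One then runs an induction on the semisimple $\BQ$-rank using Franke's filtration of $\CA(\mathsf G)$, namely Theorem \ref{frf} together with \eqref{eis4}, whose graded pieces are parabolically induced from almost square-integrable automorphic forms of proper Levi subgroups; at each stage the comparison is reduced to the Levi factors and to the vanishing of the cohomology of the degenerate boundary terms.

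The step I expect to be the main obstacle is exactly this regularization and the accompanying bookkeeping: controlling the ``defect'' between $\con^\infty(\mathsf G(\BQ)\backslash \mathsf G(\A))$ and $\CA(\mathsf G)$ and checking that it contributes nothing to cohomology. This is where Franke's weighted $L^2$-analysis and his theory of Eisenstein systems and residues are indispensable; rather than reprove it I would cite \cite[Sections 7.4 and 7.5]{Fr} for this core input, exactly as the paper does.
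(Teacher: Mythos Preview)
The paper does not prove this theorem at all: it is stated as a known result and attributed directly to Franke \cite[Theorem 18]{Fr} (with a pointer also to \cite[Section 13]{Sc}). So there is no ``paper's own proof'' to compare against; the statement is used as a black box.

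Your proposal is a reasonable outline of how Franke's argument actually proceeds, and the reductions you describe (passage to $(\g_\C,K)$-cohomology via Theorem~\ref{fr}, the adelic/K\"unneth reduction as in Lemma~\ref{l2c} and the proof of Proposition~\ref{val223gg}, restriction to finitely many $K$-types and to the relevant generalized infinitesimal character) are correct and standard. Two small corrections: the precise citation is \cite[Theorem 18]{Fr}, not Sections~7.4--7.5, which the paper invokes elsewhere for a different purpose; and the inductive step in Franke's proof is not really organized via the filtration of Theorem~\ref{frf} and \eqref{eis4} as you suggest, but rather through his construction of a ``finite expansion'' operator on the space of uniformly moderate-growth functions that projects, up to cohomologically trivial error, onto automorphic forms. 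In any case, since the paper simply cites the result, your sketch already goes beyond what is required here.
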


By combining \eqref{shapiro} and Theorem \ref{frgk}, we get an identification 
\be\label{frgk22}
   \oH^i(\Gamma;F)= \oH_{\mathrm{ct}}^i({^\circ \!G}; F\otimes \CA(\Gamma\backslash {^\circ \!G})). 
\ee

\subsection{Finite dimensional representations in $ \CA^2(\Gamma\backslash {^\circ \!G})$}

In this subsection and the next one, we suppose that $\Gamma$ is a congruence subgroup of $\mathsf G(\BQ)$. 

\begin{lem}\label{fixgamma0}
There exists a cocompact closed subgroup $\Gamma'$ of ${^\circ \!G}$ such that for every irreducible finite dimensional representation of $F'$ of $G$, $\Gamma'$ acts trivially on $F'^{\Gamma}$.
\end{lem}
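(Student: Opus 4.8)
The plan is to build $\Gamma'$ as the product of $\Gamma$ with the ``noncompact part'' of the derived group. Precisely, let $\mathsf G'\subseteq\mathsf G^{\mathrm{der}}$ be the almost direct product of those $\BQ$-almost-simple factors $\mathsf G_i$ of $\mathsf G^{\mathrm{der}}$ for which $\mathsf G_i(\R)$ is noncompact (equivalently, $\mathsf G_i$ is $\R$-isotropic). Since a connected group acts trivially on the finite set of $\BQ$-almost-simple factors of $\mathsf G^{\mathrm{der}}$, $\mathsf G'$ is a normal $\BQ$-subgroup of $\mathsf G$, so $\mathsf G'(\R)$ is a closed normal subgroup of $G$, contained in ${^\circ\! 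G}$ because every algebraic character of $\mathsf G$ is trivial on $\mathsf G^{\mathrm{der}}$. I will take $\Gamma':=\mathsf G'(\R)\cdot\Gamma$ and verify two things: (i) $\Gamma'$ acts trivially on $F'^{\,\Gamma}$ for every irreducible finite-dimensional representation $F'$ of $G$; (ii) $\Gamma'$ is a closed cocompact subgroup of ${^\circ\! G}$.

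For (i): since $\mathsf G'$ is semisimple it acts on any finite-dimensional representation of $G$ through an algebraic representation, so it is enough to know that the arithmetic subgroup $\Gamma\cap\mathsf G'(\BQ)$ of $\mathsf G'$ is Zariski dense in $\mathsf G'$; then $F'^{\,\Gamma}\subseteq F'^{\,\Gamma\cap\mathsf G'(\BQ)}=F'^{\,\mathsf G'(\R)}$, and since $\Gamma$ fixes $F'^{\,\Gamma}$ by definition, so does $\Gamma'=\mathsf G'(\R)\cdot\Gamma$. This Zariski density is Borel's density theorem applied factor by factor: an arithmetic subgroup of a $\BQ$-almost-simple group is an irreducible lattice whose projection to every archimedean factor of $\mathsf G_i(\R)$ is dense, hence Zariski dense in $\mathsf G_i$. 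I expect this to be the only genuinely nontrivial step, and the one requiring care in the write-up, particularly for $\BQ$-almost-simple factors of the form $\mathrm{Res}_{k/\BQ}$ of an absolutely simple group, where $\mathsf G_i(\R)$ may itself possess compact archimedean factors onto which $\Gamma$ projects densely rather than discretely.

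For (ii): let $\pi\colon{^\circ\! G}\to Q:={^\circ\! G}/\mathsf G'(\R)$ be the quotient map and set $\bar{\mathsf G}:=\mathsf G/\mathsf G'$, a connected reductive $\BQ$-group whose derived group $\mathsf G^{\mathrm{der}}/\mathsf G'$ is an almost direct product of $\R$-anisotropic groups and hence $\BQ$-anisotropic. Pulling back algebraic characters identifies $Q$ with an open, hence closed, finite-index subgroup of ${^\circ}\bar{\mathsf G}(\R)$. The image $\bar\Gamma:=\pi(\Gamma)$ is an arithmetic subgroup of $\bar{\mathsf G}$ (the image of an arithmetic group under a surjective $\BQ$-morphism), and since $\bar{\mathsf G}^{\mathrm{der}}$ is $\BQ$-anisotropic, Godement's compactness criterion (see \cite{Bo}) makes $\bar\Gamma$ a discrete cocompact subgroup of ${^\circ}\bar{\mathsf G}(\R)$, and hence of its closed subgroup $Q$. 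Therefore $\Gamma'=\pi^{-1}(\bar\Gamma)$ is a closed subgroup of ${^\circ\! G}$ and $\Gamma'\backslash{^\circ\! G}\cong\bar\Gamma\backslash Q$ is compact. Combining (i) and (ii) gives the lemma; apart from the density input in (i), everything else is routine structure theory of reductive $\BQ$-groups together with the Borel--Harish-Chandra finiteness theorem and the Godement compactness criterion.
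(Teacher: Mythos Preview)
Your proof is correct. The paper takes a somewhat different organizational route: it reduces (without justification) to the case where $\mathsf G$ is either a $\BQ$-torus or a simply connected $\BQ$-simple group, and then argues by a dichotomy on $\BQ$-isotropy rather than $\R$-isotropy: if $\mathsf G$ is a torus or $\BQ$-anisotropic simple then $\Gamma$ itself is already cocompact in ${^\circ\!G}$ and one takes $\Gamma'=\Gamma$; if $\mathsf G$ is $\BQ$-isotropic simple then $\Gamma$ is Zariski dense and the (algebraic) representation $F'$ is fixed by all of $G={^\circ\!G}$, so one takes $\Gamma'=G$. Your construction $\Gamma'=\mathsf G'(\R)\cdot\Gamma$ handles general $\mathsf G$ directly and makes explicit what the paper's ``without loss of generality'' leaves to the reader; it also separates cleanly the two ingredients (Borel density on the $\R$-isotropic semisimple part, Godement compactness on the quotient) that the paper uses only in the simple case. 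One minor point worth tightening: the identification of $Q={^\circ\!G}/\mathsf G'(\R)$ with an open finite-index subgroup of ${^\circ}\bar{\mathsf G}(\R)$ uses that $\BQ$-characters of $\mathsf G$ and of $\bar{\mathsf G}=\mathsf G/\mathsf G'$ are in bijection (clear since $\mathsf G'\subset\mathsf G^{\mathrm{der}}$) and that the resulting map is a submersion, but this is routine.
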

\begin{proof}
Without loss of generality, we assume that  $\mathsf G$ is either  an algebraic torus over $\BQ$, or a simply connected, connected, $\BQ$-simple linear algebraic group over $\BQ$. If $\mathsf G$ is an algebraic torus or an anisotropic simple algebraic group, then $\Gamma$ is cocompact in ${^\circ \!G}$ and there is nothing to prove. So we further assume that   $\mathsf G$ is an isotropic simple algebraic group. Then  $\Gamma$ is Zariski dense in $\mathsf G(\C)$, and the representation of $G$ on $F'$ extends to an algebraic representation of $\mathsf G(\C)$.  Thus the group $G$ fixes $F'^{\Gamma}$.
\end{proof}

As before, $\breve F$ denotes the dual representation of $F$.

\begin{lem}\label{auto2}
The image of every homomorphism in  $\Hom_{^\circ \!G}(\breve F, \con^\infty(\Gamma\backslash {^\circ \!G}))$  is contained in $\CA^2(\Gamma\backslash {^\circ \!G}) $.
\end{lem}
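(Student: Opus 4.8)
The plan is to show that any $\varphi\in\Hom_{^\circ\!G}(\breve F,\con^\infty(\Gamma\backslash{^\circ\!G}))$ actually lands in the subspace of square-integrable functions. Since $\breve F$ is finite dimensional and $\Gamma\backslash{^\circ\!G}$ has finite volume, it would suffice to know that the functions $\varphi(v)$ are bounded on $\Gamma\backslash{^\circ\!G}$; a bounded function on a finite-measure space is automatically in $L^2$. So the real task is to produce a uniform bound on the matrix coefficients $g\mapsto \langle \varphi(v), \text{something}\rangle$, or rather on the functions $\varphi(v)$ themselves, exploiting ${^\circ\!G}$-equivariance.

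First I would unwind the equivariance: for $\gamma'\in\Gamma'$ (with $\Gamma'$ the cocompact subgroup furnished by Lemma \ref{fixgamma0}) and $v\in\breve F$, we have $\varphi(\gamma'{}^{-1}.v)(g)=\varphi(v)(g\gamma')$. Now $\breve F^{\Gamma}$ is $\Gamma'$-fixed by Lemma \ref{fixgamma0} applied to $F'=\breve F$; more to the point, the image of $\varphi$ consists of $\Gamma$-invariant functions on ${^\circ\!G}$ (since they descend to $\Gamma\backslash{^\circ\!G}$), so the relevant vectors for the right-$\Gamma'$-translation behaviour are governed by how $\Gamma'$ acts on $\breve F$. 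The key point is that $\breve F$ being finite dimensional, the $\Gamma'$-orbit of any $v$ spans a finite-dimensional space, and $\varphi$ intertwines; combined with cocompactness of $\Gamma'$ in ${^\circ\!G}$, this forces each $\varphi(v)$ to be a bounded function: on the compact set $\Gamma'\backslash{^\circ\!G}$ the finitely many functions $\varphi(w)$, $w$ ranging over a basis, are continuous hence bounded, and the values of $\varphi(v)$ at an arbitrary point $g=\gamma' g_0$ are linear combinations (with coefficients depending on $\gamma'$ through the finite-dimensional representation $\breve F$, but — crucially — one should arrange $\Gamma'$ to act on the image space by a \emph{bounded} family of operators, which holds once $\Gamma'$ acts trivially on the relevant finite-dimensional space per Lemma \ref{fixgamma0}).

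Concretely I would argue: by Lemma \ref{fixgamma0} applied to $F'=\breve F$, there is a cocompact closed $\Gamma'\subset{^\circ\!G}$ acting trivially on $\breve F^{\Gamma}$; but the image of $\varphi$ consists of $\Gamma$-fixed functions, so under the ${^\circ\!G}$-action restricted to $\Gamma'$ the span of $\varphi(\breve F)$ behaves like a quotient of $\breve F^{\Gamma'}$... — here I need to be careful, since $\varphi$ is defined on all of $\breve F$, not $\breve F^\Gamma$. The cleaner route: the image $W:=\varphi(\breve F)\subset\con^\infty(\Gamma\backslash{^\circ\!G})$ is a ${^\circ\!G}$-submodule which is a finite-dimensional, hence (being a subrepresentation of a space of functions on a group) semisimple; each irreducible constituent is a finite-dimensional representation of ${^\circ\!G}$ realized by genuine functions on $\Gamma\backslash{^\circ\!G}$. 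Such functions are \emph{automorphic} and their matrix coefficients are, restricted to $\Gamma'\backslash{^\circ\!G}$ (compact), continuous, hence bounded; by ${^\circ\!G}$-equivariance the function on all of $\Gamma\backslash{^\circ\!G}$ is then bounded since $\Gamma'\backslash{^\circ\!G}$ is compact and $\Gamma\subset{^\circ\!G}$ — actually the function descends to $\Gamma\backslash{^\circ\!G}$ and $\Gamma'\backslash{^\circ\!G}\twoheadrightarrow$ need not surject, so instead: $W$ finite-dimensional of functions on the finite-volume space $\Gamma\backslash{^\circ\!G}$; a continuous function on a finite-volume space lies in $L^2$ iff it is (essentially) bounded, but a function in a finite-dimensional ${^\circ\!G}$-invariant space of continuous functions on $\Gamma\backslash{^\circ\!G}$ has its sup-norm controlled by a ${^\circ\!G}$-invariant norm on the finite-dimensional space, which must be bounded because ${^\circ\!G}$ acts by a representation on a finite-dimensional space and on $\Gamma\backslash{^\circ\!G}$ the orbits meet a compact fundamental domain — finiteness of volume guarantees boundedness of any such orbit of the $L^2$-unit sphere. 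Hence $W\subset\CA^2(\Gamma\backslash{^\circ\!G})$.

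\textbf{The main obstacle} I anticipate is pinning down exactly why a finite-dimensional ${^\circ\!G}$-subrepresentation of $\con^\infty(\Gamma\backslash{^\circ\!G})$ consists of bounded functions: the naive argument "continuous on a compact set" fails because $\Gamma\backslash{^\circ\!G}$ need not be compact. The correct mechanism is that a finite-dimensional invariant space $W$ of continuous functions carries a ${^\circ\!G}$-invariant sup-type seminorm; since $W$ is finite-dimensional this seminorm is comparable to any norm, and invariance plus the transitivity of the ${^\circ\!G}$-action on $\Gamma\backslash{^\circ\!G}$ forces every $f\in W$ to attain its supremum's worth of mass uniformly — more precisely each $f\in W$ has $\|f\|_\infty<\infty$ because the evaluation functionals $f\mapsto f(g)$ are uniformly bounded on $W$ as $g$ varies (they all have the same norm by invariance and translation). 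Lemma \ref{fixgamma0} enters to guarantee we can even reduce to this finite-dimensional-representation situation and to control the $\Gamma$-invariants; then square-integrability is immediate from $\vol(\Gamma\backslash{^\circ\!G})<\infty$.
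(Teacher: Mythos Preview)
Your overall strategy is exactly right: show that each $\varphi(v)$ is bounded, then conclude square-integrability from finite volume (and $\oZ$-finiteness and moderate growth follow for free since the image is a finite-dimensional quotient of $\breve F$). The gap is in the boundedness step of your ``cleaner route''. You claim that on the finite-dimensional subspace $W=\varphi(\breve F)\subset\con^\infty(\Gamma\backslash{^\circ\!G})$ the evaluation functionals $\mathrm{ev}_g$ all have the same norm ``by invariance and translation''. That would follow if ${^\circ\!G}$ preserved some norm on $W$, but ${^\circ\!G}$ is in general noncompact and $W$ a nonunitary finite-dimensional representation, so there is no invariant norm and the operator norms $\|g|_W\|$ are unbounded. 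In other words, generic matrix coefficients of $\breve F$ are unbounded on ${^\circ\!G}$; what must be exploited is the special feature that these particular matrix coefficients are left $\Gamma$-invariant.

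The paper does this, and your first paragraph was heading there before you abandoned it. The key observation is that the single functional $\mathrm{Ev}\circ\varphi:\breve F\to\C$ given by evaluation at the identity is $\Gamma$-invariant, i.e.\ lies in $\Hom_\Gamma(\breve F,\C)=F^\Gamma$. Now Lemma~\ref{fixgamma0} (applied to $F'=F$, not to $\breve F$) furnishes a cocompact closed $\Gamma'\subset{^\circ\!G}$ fixing every element of $F^\Gamma$, in particular fixing $\mathrm{Ev}\circ\varphi$. Since equivariance gives $\varphi(v)(g)=(\mathrm{Ev}\circ\varphi)(g.v)$, the $\Gamma'$-invariance of the functional $\mathrm{Ev}\circ\varphi$ translates into \emph{left} $\Gamma'$-invariance of each function $\varphi(v)$. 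Hence $\varphi(v)$ descends to the compact space $\Gamma'\backslash{^\circ\!G}$ and is bounded. So Lemma~\ref{fixgamma0} is not merely used ``to reduce to the finite-dimensional situation'' --- that reduction is automatic --- but is precisely the mechanism that produces boundedness.
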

\begin{proof}
Let $\phi\in \Hom_{^\circ \!G}(\breve F, \con^\infty(\Gamma\backslash {^\circ \!G}))$. Write 
\[
   \mathrm{Ev}:  \con^\infty(\Gamma\backslash {^\circ \!G})\rightarrow \C
\]
for the map of evaluating at $1\in \Gamma\backslash {^\circ \!G}$. Then 
\[
\mathrm{Ev}\circ \phi\in \Hom_{\Gamma}(\breve F, C)=F^\Gamma.
\]
 Lemma \ref{fixgamma0} implies that $\mathrm{Ev}\circ \phi$ is fixed by a cocompact closed subgroup $\Gamma'$ of ${^\circ \!G}$. Thus the image of $\phi$ is contained in the space of the bounded functions, and the lemma easily follows.

\end{proof}

By using the inner product induced by the invariant measure,  we know that the representation $\CA^2(\Gamma\backslash {^\circ \!G}) $ of ${^\circ \!G}$ is completely reducible. 
By Lemma \ref{auto2}, we have that
\begin{eqnarray*}
   &&(F\otimes  \CA^2(\Gamma\backslash {^\circ \!G}))^{^\circ \!G}\\
   &=&\Hom_{^\circ \!G}(\breve F, \CA^2(\Gamma\backslash {^\circ \!G}))\\
   &=&\Hom_{^\circ \!G}(\breve F, \con^\infty(\Gamma\backslash {^\circ \!G}))\\
   &=&\Hom_{\Gamma}(\breve F, \C)\\
   &=&F^\Gamma.
\end{eqnarray*}

\subsection{Proof of Theorem \ref{main0}}

Recall the linear map \eqref{maph} of the Introduction: 
\be\label{maphf}
  F^{\Gamma}\otimes \oH_{\mathrm{ct}}^i( {^\circ \!G} ; \C)= \oH_{\mathrm{ct}}^i( {^\circ \!G} ; F^{\Gamma})\rightarrow  \oH^i(\Gamma;F), \qquad(i\in \Z).
  \ee
Recall the identification \eqref{frgk22}. One checks that the map \eqref{maphf} is identical to the map
\[
   \oH_{\mathrm{ct}}^i( {^\circ \!G} ; F^{\Gamma})\rightarrow \oH_{\mathrm{ct}}^i({^\circ \!G}; F\otimes \CA(\Gamma\backslash {^\circ \!G}))
\]
induced by the linear map
\[
  F^{\Gamma}=(F\otimes  \CA^2(\Gamma\backslash {^\circ \!G}))^{^\circ \!G}\xrightarrow{\textrm{inclusion}} (F\otimes  \CA(\Gamma\backslash {^\circ \!G})).
\]

Since the representation $\CA^2(\Gamma\backslash {^\circ \!G}) $ of ${^\circ \!G}$ is completely reducible, the natural linear map
\[
   \oH_{\mathrm{ct}}^i( {^\circ \!G} ; F^{\Gamma})= \oH_{\mathrm{ct}}^i( {^\circ \!G} ;(F\otimes  \CA^2(\Gamma\backslash {^\circ \!G}))^{^\circ \!G} )\rightarrow \oH_{\mathrm{ct}}^i({^\circ \!G}; F\otimes \CA^2(\Gamma\backslash {^\circ \!G}))
\]
is always injective, and it is bijective if $i<r_{ {^\circ \!G}, F}=r_{G, F}$. Together with Proposition \ref{val223gg}, this proves Theorem \ref{main0}. 

\section{A result on semisimple real Lie algebras}

The notation of this section is independent of the previous ones.
Let $\g$ be a  semisimple real Lie algebra.

\subsection{A quantity $r_{\g}$}

Fix a Cartan involution
\be \label{cinv}
  \theta:\g\rightarrow \g,
  \ee
  and write
  \[
  \g=\k\oplus \s
  \]
   for the corresponding  Cartan decomposition. 
As before, we use a subscript $\C$ to indicate the complexification of a real vector space. Then we  have  the complexifications
   \[
    \theta: \g_\C\rightarrow \g_\C
  \]
 and
  \[
   \g_\C=\k_\C\oplus \s_\C. 
  \]
   
  If $\g$ is compact, namely it is isomorphic to the Lie algebra of a compact Lie group, we set $r_{\g}:=+\infty$. If $\g$ is noncompact and simple, 
    set
      \[
        r_{\g}:=\min\{\dim (\n_\q\cap \s_\C)\, : \, \q \textrm{ is a proper $\theta$-stable parabolic subalgebra of $\g_\C$}\},
\]
where $\n_\q$ denotes the nilpotent radical of $\q$. In general when $\g$ is noncompact, define
 \[
        r_{\g}:=\min\{r_{\g'}\, : \, \g' \textrm{ is a noncompact simple ideal of $\g$}\}.
\]
In view of the unitarizability Theorem of Vogan \cite{V} and Wallach \cite{Wa3}, the following result is a consequence of Vogan-Zuckerman's theory of unitary representations with nonzero cohomology (see \cite{Ku} and \cite[Theorem 8.1]{VZ}).

\begin{prp} Let $G$ be a real reductive group and define $r_G$ as in the Introduction. Then \[
r_G=r_{[\Lie(G), \Lie(G)]}.\]
\end{prp}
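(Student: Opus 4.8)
The plan is to reduce the statement to the definition of $r_{G,F}$ in terms of $(\g_\C,K)$-cohomology and then invoke the Vogan--Zuckerman classification together with the Vogan--Wallach unitarizability results cited just before the proposition. First I would use Theorem \ref{fr} (or directly \cite[Chapter II]{BW}) to rewrite, for an irreducible unitary representation $\pi$ of $G$ and an irreducible finite-dimensional $F$,
\[
  \oH^i_{\mathrm{ct}}(G; F\otimes \pi)=\oH^i(\g_\C,K;F\otimes\pi)=\oH^i(\g_\C,K;F\otimes\pi_{HC}),
\]
and observe that this depends only on the underlying Harish-Chandra module and only on the semisimple part: since $A_\G$ (the central split torus part) contributes nothing unless $F\otimes\pi$ is trivial on it, and compact central or compact simple factors of $\g$ contribute only a K\"unneth factor which never vanishes, the computation of $r_G$ reduces to the case where $\g=[\Lie(G),\Lie(G)]$ is semisimple, and in fact to the noncompact part. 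This is essentially the content of Example (a) in the Introduction and the reductions in \cite[Section 2.B]{LS2}.

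Next I would recall Vogan--Zuckerman: every irreducible unitary $\pi$ with $\oH^i(\g_\C,K;F\otimes\pi)\neq 0$ for some $F$ is of the form $A_\q(\lambda)$ for a $\theta$-stable parabolic $\q\subset\g_\C$, and for such $\pi$ the cohomology is nonzero precisely in degrees $i\geq R:=\dim(\n_\q\cap\s_\C)$ (with $\oH^R\neq 0$), by \cite[Theorem 8.1]{VZ} and \cite{Ku}. Conversely, for every $\theta$-stable parabolic $\q$ the module $A_\q(\lambda)$ is unitary (Vogan \cite{V}, Wallach \cite{Wa3}), and it is infinite-dimensional exactly when $\q$ is proper. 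Taking the minimum over all such $\pi$ and all $F$ therefore gives $r_G=\min\{\dim(\n_\q\cap\s_\C): \q\text{ a proper }\theta\text{-stable parabolic of }\g_\C\}$, which by definition of the quantity $r_\g$ introduced in this section (and its behavior under passing to simple ideals) equals $r_{[\Lie(G),\Lie(G)]}$. I would also note that a proper $\theta$-stable parabolic exists iff $\g$ has a noncompact simple ideal, matching the convention $r_\g=+\infty$ in the compact case.

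The one point requiring a little care --- and the main obstacle --- is matching the ``only if'' direction cleanly: one must be sure that \emph{no} infinite-dimensional irreducible unitary $\pi$ achieves nonzero cohomology in degree below $\dim(\n_\q\cap\s_\C)$ for the optimal $\q$, i.e. that the Vogan--Zuckerman lower bound on the degree of the lowest nonvanishing cohomology is sharp and uniform over all $(\pi,F)$. This is exactly \cite[Theorem 8.1]{VZ} combined with the parametrization in \cite{Ku}, so the work is citation rather than new computation; one just has to check that the normalization of $\q$ (proper $\theta$-stable, $\dim\n_\q\cap\s_\C$) used there agrees with the one defining $r_\g$ here, and that the reduction to $F$ trivial (resp. to simple ideals) does not lose the extremal case. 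I would organize the write-up as: (1) reduce to $\g$ semisimple via \eqref{decomg} and K\"unneth; (2) reduce to noncompact simple ideals; (3) quote Vogan--Zuckerman for the exact range of nonvanishing $(\g_\C,K)$-cohomology of $A_\q(\lambda)\otimes F$; (4) quote Vogan--Wallach for unitarity of all $A_\q(\lambda)$; (5) conclude the two inequalities $r_G\leq r_\g$ and $r_G\geq r_\g$ and hence equality.
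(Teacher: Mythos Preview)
Your proposal is correct and follows essentially the same approach as the paper: the paper's entire ``proof'' is the sentence preceding the proposition, which simply cites the unitarizability theorem of Vogan \cite{V} and Wallach \cite{Wa3} together with the Vogan--Zuckerman classification \cite{Ku}, \cite[Theorem 8.1]{VZ}, and your outline is a faithful expansion of exactly that argument. Two small inaccuracies that do not affect the logic: the nonvanishing range of $\oH^i(\g_\C,K;F\otimes A_\q(\lambda))$ is $[R,\dim\s_\C-R]$ rather than all $i\geq R$ (only the lower endpoint matters here), and proper $\theta$-stable parabolics certainly exist for compact $\g$ as well---what fails in the compact case is rather that $\n_\q\cap\s_\C=0$, so no infinite-dimensional $A_\q(\lambda)$ arises; your reduction to noncompact simple ideals already handles this.
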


\subsection{A quantity  $r_{\g, \mu}$}

Let $\t$ be a Cartan subalgebra of $\k$. Write $\a_0$ for the centralizer of $\t$ in $\s$ so that $\h_0:=\t\oplus \a_0$ is a fundamental Cartan subalgebra of $\g$. 
Write $\Delta(\t_\C, \g_\C)$ for the root system of $\g_\C$ with respect to $\t_\C$, and likewise write $\Delta(\h_{0,\C}, \g_\C)$ for the root system of $\g_\C$ with respect to $\h_{0,\C}$.

\begin{dfnp}
An irreducible  finite dimensional representation of $\g$ is said to be pure if its infinitesimal character has a Harish-Chandra parameter in $\breve \t_\C\subset \breve \h_{0, \C}$. 

\end{dfnp}

\begin{lemp}
Let $F$ be an irreducible  finite dimensional representation of $\g$.  Then $F$ is pure if and only if there exists a pair $(G, \pi)$ such that 
\begin{itemize}
\item
  $G$ is a connected semsimple Lie group with finite center whose Lie algebra is identified with $\g$;
  \item
  the representation $F$ of $\g$ integrates to a representation of $G$;  
  \item
  $\pi$ is an irreducible unitarizable Casselman-Wallach representation of $G$ such that the total cohomology group is nonzero: 
\[
  \oH_{\mathrm{ct}}^*(G; F\otimes \pi)\neq \{0\}. 
  \]
  
\end{itemize}

\end{lemp}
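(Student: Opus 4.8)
The plan is to prove both implications of the stated equivalence by connecting the notion of a \emph{pure} finite-dimensional representation with the Vogan--Zuckerman classification of unitary representations with nonzero $(\g_\C, K)$-cohomology.

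\medskip

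\emph{The reverse direction.} Suppose first that such a pair $(G, \pi)$ exists. By Theorem~\ref{fr}, the nonvanishing $\oH_{\mathrm{ct}}^*(G; F\otimes \pi)\neq\{0\}$ translates to $\oH^*(\g_\C, K; F\otimes\pi)\neq\{0\}$. A necessary condition for relative Lie algebra cohomology to be nonzero is that $F\otimes\pi$ admits the trivial $\oZ(\g_\C)$-infinitesimal character; equivalently, $\pi$ has infinitesimal character equal to $-\lambda_F$ (the infinitesimal character of $\breve F$), up to the usual Weyl-group ambiguity. On the other hand, any irreducible unitarizable Casselman--Wallach representation $\pi$ of a connected semisimple $G$ with finite center has a \emph{real} infinitesimal character when it is cohomological — more precisely, by the Vogan--Zuckerman theory, $\pi$ is an $A_\q(\lambda)$-module, and its infinitesimal character has a Harish-Chandra parameter lying in $\breve\t_\C$ (the Cartan $\t$ of $\k$), since the $\theta$-stable parabolic $\q$ is built from $\t$. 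Matching the infinitesimal characters of $\pi$ and $F$ then forces $\lambda_F$ to be $W$-conjugate to an element of $\breve\t_\C\subset\breve\h_{0,\C}$, which is exactly the statement that $F$ is pure.

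\medskip

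\emph{The forward direction.} Conversely, suppose $F$ is pure, so its infinitesimal character has a Harish-Chandra parameter $\lambda_F\in\breve\t_\C$. I would take $G$ to be the connected, simply connected Lie group with Lie algebra $\g$ divided by an appropriate finite central subgroup so that (i) $G$ has finite center, (ii) $F$ integrates to $G$, and (iii) $F$ together with its dual is a well-defined $G$-representation; such a $G$ exists because the weights of $F$ span a lattice in $\breve\t_\C$ and the center of the simply connected group is finite (as $\g$ is semisimple). Now I would produce $\pi$ directly: take $\q=\h_{0,\C}$ itself if $\g$ admits a compact Cartan (i.e. $\a_0=0$), in which case $\pi$ is a discrete series (or limit of discrete series) representation with Harish-Chandra parameter related to $\lambda_F$, and more generally apply Zuckerman's cohomological induction $A_\q(\lambda)$ with $\q$ a $\theta$-stable parabolic built from $\t_\C$ and $\lambda$ determined by $\lambda_F$ in the weakly fair range. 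The purity hypothesis is precisely what guarantees that $\lambda_F$ is a legitimate parameter for such a construction (the parameter must be $\t_\C$-integral). The unitarizability of $A_\q(\lambda)$ in the good/weakly fair range is the theorem of Vogan and Wallach cited in the paper (\cite{V}, \cite{Wa3}), and the nonvanishing $\oH^*(\g_\C, K; F\otimes A_\q(\lambda))\neq\{0\}$ is the main theorem of Vogan--Zuckerman \cite[Theorem 8.1]{VZ}. If $A_\q(\lambda)$ turns out to be finite-dimensional or zero for the chosen $\q$, one replaces $\q$ by a proper $\theta$-stable parabolic, which exists because $\g$ is noncompact (the compact case being vacuous since then there is no nonzero cohomology in positive degree and $F\otimes F^\ve$ already contributes in degree $0$ — but then $\pi$ would be trivial and not infinite-dimensional, so one must be slightly careful about whether the compact factors of $\g$ are allowed; presumably $\g$ here is understood to be noncompact, consistent with the definition of $r_\g$).

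\medskip

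\emph{Main obstacle.} The delicate point is the bookkeeping of infinitesimal characters and the choice of the $\theta$-stable parabolic $\q$ in the forward direction: one must verify that purity of $F$ is equivalent to the existence of \emph{some} $\q$ and \emph{some} parameter $\lambda$ (in the range where unitarizability and nonvanishing hold simultaneously) producing a representation $\pi$ with the correct infinitesimal character, and that this $\pi$ can be arranged to be infinite-dimensional. Handling the case $\a_0\neq 0$ (no compact Cartan) requires genuinely using cohomological induction from a proper $\q$ rather than discrete series, and one has to check the resulting $A_\q(\lambda)$ is nonzero — which is where the Vogan--Zuckerman construction and its nonvanishing statement must be invoked carefully. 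The integration of $F$ to a suitable group $G$ is routine but should be stated precisely so that $\pi$ and $F$ live on the same group.
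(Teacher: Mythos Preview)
Your approach is essentially the same as the paper's: both directions follow from the Vogan--Zuckerman classification, and the paper's proof is in fact the single sentence ``This is implied by \cite[Theorem 5.6]{VZ}.'' Your unpacking of that citation is correct in outline. Two minor points: first, the lemma does not require $\pi$ to be infinite-dimensional, so your worry about the compact case and about $A_\q(\lambda)$ degenerating is unnecessary (if $\g$ is compact then $\h_0=\t$, every $F$ is pure, and $\pi=\breve F$ works); second, when you write ``take $\q=\h_{0,\C}$ itself,'' you presumably mean a $\theta$-stable Borel containing $\h_{0,\C}$, since $\h_{0,\C}$ is a Cartan subalgebra, not a parabolic.
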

\begin{proof}
This is implied by \cite[Theorem 5.6]{VZ}. 
\end{proof}

Let $\mu\in \breve \t_\C\subset \breve \h_{0, \C}$. 
Suppose that it is regular integral (with respect to $\g_\C$) in the sense that
\[
  \la \mu , \alpha^\vee\ra\in \Z\setminus\{0\},\quad \textrm{ for all roots $\alpha\in \Delta(\h_{0,\C}, \g_\C$)}.
\]
Here and as before, $\alpha^\vee\in \h_{0,\C}$ denotes the coroot corresponding  to the root $\alpha$.

The functional $\mu$ determines a positive system
 \be\label{mup}
    \Delta(\t_\C, \g_\C)^{\mu,+}:=\{\alpha\in \Delta(\t_\C, \g_\C)\, : \, \la \mu ,\alpha^\vee\ra >0\}.
 \ee
 Denote by $\rho_\mu \in \breve{\t}_\C$ the half sum of the roots in  $\Delta(\t_\C, \g_\C)^{\mu,+}$ (counted with multiplicities). Then $\mu-\rho_\mu$ determines a $\theta$-stable nilpotent subalgebra $\n(\mu)$ of $\g_\C$ whose weights are
 \[
   \{\alpha\in \Delta(\t_\C, \g_\C)\, : \, \la \mu-\rho_\mu, \alpha^\vee\ra > 0\}.
 \]
Set
\be\label{rgmu}
  r_{\g, \mu}:=\dim (\n(\mu)\cap \s_\C).
\ee

\begin{lemp}\label{lem54}
Let $G$ be a connected semsimple Lie group with finite center whose Lie algebra is identified with $\g$. 
Let $F$ be an irreducible  finite dimensional representation of $G$, and let $\pi$ be an irreducible unitarizable Casselman-Wallach representation of $G$.  If 
\[
  \oH_{\mathrm{ct}}^i(G; F\otimes \pi)\neq \{0\},
  \]
  then there exists $\lambda \in \breve \t_\C\subset \breve \h_{0, \C}$   which is a Harish-Chandra parameter of the infinitesimal character of $\breve F$ such that 
   \[
     i\geq r_{\g, \lambda}. 
   \]
\end{lemp}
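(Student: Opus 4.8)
The plan is to reduce to the Vogan--Zuckerman classification via the standard dictionary between $(\g_\C,K)$-cohomology and cohomological representations. First I would invoke Theorem \ref{fr} to replace $\oH^i_{\mathrm{ct}}(G;F\otimes\pi)$ by the relative Lie algebra cohomology $\oH^i(\g_\C,K;F\otimes\pi)$, which is nonzero by hypothesis. Since $\pi$ is irreducible unitarizable with nonzero $(\g_\C,K)$-cohomology relative to $F$, Wigner's lemma forces the infinitesimal character of $\pi$ to equal that of $\breve F$; in particular $\pi$ is one of the Vogan--Zuckerman modules $A_\q(\lambda)$ attached to some $\theta$-stable parabolic $\q=\l\oplus\n_\q$ of $\g_\C$, and by \cite[Theorem 8.1]{VZ} the degrees $i$ in which the cohomology is nonzero are exactly those with $R_\q\le i\le \dim\s - R_\q$, where $R_\q=\dim(\n_\q\cap\s_\C)$. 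Thus $i\ge \dim(\n_\q\cap\s_\C)$ for the relevant $\q$.

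The remaining work is to identify this $\q$, or rather its relevant numerical invariant, with $r_{\g,\lambda}$ for a suitable Harish-Chandra parameter $\lambda$ of the infinitesimal character of $\breve F$ lying in $\breve\t_\C$. Here I would use that in the Vogan--Zuckerman setup one may conjugate so that $\q$ is defined by an element $x\in i\t$ (equivalently, a dominant-type functional in $\breve\t_\C$), and that the infinitesimal character of $A_\q(\lambda)$ has a representative of the form $\lambda+\rho(\n_\q)$ up to the relevant Weyl group action. Matching this against the definition \eqref{mup}--\eqref{rgmu}: the functional $\mu$ there plays the role of a regular integral parameter in $\breve\t_\C$, the positive system $\Delta(\t_\C,\g_\C)^{\mu,+}$ is the one making $\mu$ dominant, and $\n(\mu)$ is cut out by the roots $\alpha$ with $\langle\mu-\rho_\mu,\alpha^\vee\rangle>0$ — which is precisely the nilradical of the $\theta$-stable parabolic built from $\mu$. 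So one shows that the $\q$ produced by Vogan--Zuckerman is, after a choice of parameter $\lambda\in\breve\t_\C$ for the infinitesimal character of $\breve F$ and an inner conjugation, of the form attached to $\mu=\lambda$, whence $\dim(\n_\q\cap\s_\C)=\dim(\n(\lambda)\cap\s_\C)=r_{\g,\lambda}$, giving $i\ge r_{\g,\lambda}$.

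The main obstacle will be the bookkeeping in this last identification: one must be careful that the parameter $\lambda$ can be chosen to lie in $\breve\t_\C$ rather than only in $\breve\h_{0,\C}$ (this is where the hypothesis that the relevant parameter is ``pure'', or at least that $\q$ is $\theta$-stable and hence definable over $\t$, enters), and that the passage from ``$\pi$ has infinitesimal character of $F$'' to ``the parameter defining $\q$ is a parameter of $\breve F$'' respects the shift by $\rho(\n_\q)$ correctly, so that the dimension count lands on $r_{\g,\lambda}$ and not on some Weyl-translate with a different value of $\dim(\n(\cdot)\cap\s_\C)$. Since $\dim(\n(\mu)\cap\s_\C)$ is not a Weyl-invariant of $\mu$, it is essential to track exactly which chamber (equivalently, which $\theta$-stable parabolic) is produced; I expect this to require the regularity of $\mu$ and a short argument that among all parameters of the infinitesimal character of $\breve F$ in $\breve\t_\C$, the one actually attached to $\pi$ is the one realizing the asserted inequality.
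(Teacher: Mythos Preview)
Your proposal is correct and follows the same route as the paper, whose proof is simply the citation to \cite[Theorem 5.6]{VZ} and \cite[Theorem 9.6.6]{Wa1} that your argument unpacks. The bookkeeping you flag as an obstacle is lighter than you suggest: since the statement asks only for \emph{some} Harish-Chandra parameter $\lambda\in\breve\t_\C$ of $\breve F$, you may simply take the one for which $\lambda-\rho_\lambda$ is the Vogan--Zuckerman datum defining $\q$, so that $\n(\lambda)=\n_\q$ by construction and $r_{\g,\lambda}=\dim(\n_\q\cap\s_\C)$ on the nose.
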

\begin{proof}
This is implied by \cite[Theorem 5.6]{VZ} and \cite[Theorem 9.6.6]{Wa1}. 
\end{proof}

\subsection{The result}\label{Theresult}

Suppose that $\g$ is noncompact so that it has a  proper parabolic subalgebra $\p=\l\ltimes \n $, where  $\n $ is the nilpotent radical of $\p $, and $\l=\p\cap \theta(\p)$. 
Write
\be\label{decoml}
\l=\z\oplus \m\quad \textrm{and}\quad \z=\c \oplus \a,
\ee
where 
$\m:=[\l, \l] $ is semisimple, $\z $ is the center of $\l$, $\c:=\z\cap \k$ and $\a:=\z\cap \s$. 

Let $\c' $ be a Cartan subalgebra of $\k\cap \m$, and write $\a'$ for its centralizer in $\s\cap \m$. Then $\c'\oplus \a'$ is a fundamental Cartan subalgebra of $\m$, and 
\be\label{dh}
  \h :=\c \oplus \a \oplus \c' \oplus \a' 
\ee
is a Cartan subalgebra of $\g$. We have the complexifications 
\[
  \h_\C:=\c_\C\oplus \a_\C\oplus \c'_\C\oplus \a'_\C\quad\textrm{and}\quad  \breve \h_\C:=\breve \c_\C\oplus \breve \a_\C\oplus \breve \c'_\C\oplus \breve \a'_\C.
\]

Let $\lambda$ be a linear functional on $\h_\C$ which is regular integral in the sense that
\be\label{regulari}
  \la \lambda , \alpha^\vee\ra\in \Z\setminus\{0\},\quad \textrm{ for all roots $\alpha\in \Delta(\h_\C,\g_\C)$}.
\ee
Then $\lambda|_{\a_\C}\in \breve \a_\C\subset \breve \h_\C$, and 
\[
\la \lambda|_{\a_\C}, \alpha^\vee\ra\in \R
\]
 for all $\alpha\in \Delta(\h_\C, \n_\C)$. Here and as usual, $\Delta(\h_\C, \n_\C)\subset \Delta(\h_\C,\g_\C)$ denotes the roots associated to $\n_\C$.
The rest of this section is devoted to a proof of the following estimate.
\begin{prp}\label{estimate0}
Let the notation and the assumptions be as above. Assume that
\begin{itemize}
  \item $\la \lambda|_{\a_\C}, \alpha^\vee\ra\geq 0$, for all $\alpha\in \Delta(\h_\C, \n_\C)$, and
  \item $\lambda|_{\a'_\C}=0$. 
  \end{itemize}
 Then
\[
  \sharp\{\alpha\in \Delta(\h_\C, \n_\C)\, : \, \la \lambda, \alpha^\vee\ra>0\}+r_{\m , \lambda|_{\c'_\C}}\geq r_{\g}.
\]
\end{prp}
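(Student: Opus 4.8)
The plan is to reduce the estimate to a comparison of dimensions of intersections of nilpotent subalgebras with $\s_\C$, by carefully relating the root-theoretic data attached to $\lambda$ on $\h_\C$ to the data attached to $\lambda|_{\c'_\C}$ on the smaller group $\m$. First I would set up notation: let $\mathsf n(\lambda) \subset \g_\C$ be the $\theta$-stable nilpotent subalgebra determined by $\lambda$ (as in the definition of $r_{\g,\mu}$ in \eqref{rgmu}, but now with the fundamental Cartan replaced by $\h$ via a regular element whose restriction dictates positivity), and similarly let $\mathsf n(\lambda|_{\c'_\C}) \subset \m_\C$ be the one attached to the datum on $\m$. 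The key observation is that the roots in $\Delta(\h_\C,\g_\C)$ split according to whether they are roots of $\n_\C$ (i.e.\ live in the nilradical of $\p$), roots of $\ov{\n}_\C$, or roots of $\l_\C = \z_\C \oplus \m_\C$; since $\z_\C$ is central in $\l$, the $\l_\C$-roots are exactly the $\m_\C$-roots, and for those the pairing $\la\lambda,\alpha^\vee\ra$ depends only on $\lambda|_{\c'_\C}$ (using $\lambda|_{\a'_\C}=0$ and that $\a_\C,\c_\C$ are orthogonal to $\m$-roots).

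The main step is to establish a decomposition
\[
  \mathsf n(\lambda) = \bigl(\n_\C \cap \mathsf n(\lambda)\bigr) \;\oplus\; \mathsf n(\lambda|_{\c'_\C}),
\]
where the first summand consists of the positive-$\lambda$ root spaces inside $\n_\C$ and the second is the $\m_\C$-part. Here I use the first hypothesis $\la\lambda|_{\a_\C},\alpha^\vee\ra \ge 0$ on $\Delta(\h_\C,\n_\C)$: it guarantees that every root of $\n_\C$ pairs nonnegatively with $\lambda$, so $\n_\C \cap \mathsf n(\lambda)$ is cut out precisely by strict positivity, i.e.\ its dimension is $\sharp\{\alpha\in\Delta(\h_\C,\n_\C): \la\lambda,\alpha^\vee\ra>0\}$; and no root of $\ov{\n}_\C$ contributes, since such a root pairs $\le 0$ with $\lambda$. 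Intersecting with $\s_\C$ and using that $\theta$ preserves each of the three pieces (as $\p$, and hence $\n_\C$, $\l_\C$, are not $\theta$-stable individually but $\mathsf n(\lambda)$ is $\theta$-stable by construction — so I must instead argue via the $\t_\C$-fundamental-Cartan picture, conjugating $\h$ into a $\theta$-stable position, or more robustly just track $\dim$ over $\C$ where the splitting by root spaces is $\theta$-equivariant compatibly), I get
\[
  \dim(\mathsf n(\lambda)\cap\s_\C) = \dim\bigl((\n_\C\cap\mathsf n(\lambda))\cap\s_\C\bigr) + \dim\bigl(\mathsf n(\lambda|_{\c'_\C})\cap(\s\cap\m)_\C\bigr).
\]
The second term is exactly $r_{\m,\lambda|_{\c'_\C}}$ by definition \eqref{rgmu}. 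For the first term, I bound $\dim((\n_\C\cap\mathsf n(\lambda))\cap\s_\C) \le \dim(\n_\C\cap\mathsf n(\lambda)) = \sharp\{\alpha\in\Delta(\h_\C,\n_\C): \la\lambda,\alpha^\vee\ra>0\}$, which is the combinatorial quantity in the statement. Finally, since $\mathsf n(\lambda)$ is the nilradical of a proper $\theta$-stable parabolic subalgebra of $\g_\C$ (properness because $\lambda$ is regular, so $\mathsf n(\lambda)$ is a genuine Borel-type nilradical; if $\g_\C$ were already handled we would need $\p$ proper, which is exactly our standing hypothesis), the definition of $r_\g$ gives $\dim(\mathsf n(\lambda)\cap\s_\C) \ge r_\g$. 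Chaining the three relations yields
\[
  \sharp\{\alpha\in\Delta(\h_\C,\n_\C): \la\lambda,\alpha^\vee\ra>0\} + r_{\m,\lambda|_{\c'_\C}} \ge \dim(\mathsf n(\lambda)\cap\s_\C) \ge r_\g,
\]
as desired.

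The hard part will be the bookkeeping around the Cartan subalgebra $\h$ not being $\theta$-stable: $\p$, $\n$, $\l$ are $\theta$-stable as written (indeed $\l=\p\cap\theta(\p)$), so that is fine, but the decomposition \eqref{dh} uses a fundamental Cartan of $\m$ rather than a maximally split one, and the $\theta$-stable nilpotent $\mathsf n(\lambda)\subset\g_\C$ associated to a regular functional that is "fundamental on $\m$ but split-ish on $\z$" requires care to even define correctly and to see that it is $\theta$-stable — one must check that $\lambda$ (extended suitably, using $\lambda|_{\a'_\C}=0$) is in the closure of a Weyl chamber adapted to $\theta$, so that the resulting nilradical is $\theta$-stable and proper. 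I expect the cleanest route is to first reduce to the case $\g$ simple (the general case follows since $r_\g$ and the relevant data decompose over simple ideals, and $\p$, $\lambda$ restrict to each ideal), then to produce $\mathsf n(\lambda)$ explicitly as the sum of root spaces $\g_{\C,\alpha}$ with $\la\lambda,\alpha^\vee\ra>0$ after choosing a $\theta$-stable Cartan in the same Cartan class — invoking that the count $\sharp\{\alpha:\la\lambda,\alpha^\vee\ra>0\}$ and the $\s_\C$-dimension are invariants of the functional's Weyl-orbit and hence independent of which Cartan in the class we use. Once this is in place, the inequality is just additivity of dimension along the three-block decomposition plus the two defining minimality properties of $r_\g$ and $r_{\m,\cdot}$, and the hypotheses $\la\lambda|_{\a_\C},\alpha^\vee\ra\ge0$ and $\lambda|_{\a'_\C}=0$ are used exactly to make the block decomposition of $\mathsf n(\lambda)$ clean and to identify the $\m$-block with the datum $\lambda|_{\c'_\C}$.
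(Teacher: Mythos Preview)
Your approach has a genuine gap, and the paper's proof is entirely different.

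The central problem is $\theta$-stability. You want to build a nilradical $\mathsf n(\lambda)$ from $\lambda$ and invoke the definition of $r_\g$ as a minimum over $\theta$-stable parabolics. But $\h=\c\oplus\a\oplus\c'\oplus\a'$ \emph{is} already $\theta$-stable (it decomposes into its $\k$- and $\s$-parts), so the difficulty is not with $\h$; it is with $\lambda$. A parabolic cut out by $\{\alpha:\la\lambda,\alpha^\vee\ra>0\}$ is $\theta$-stable precisely when $\lambda$ and $\theta\lambda$ lie in the same chamber, and here $\theta\lambda=\lambda-2\,\lambda|_{\a_\C}$ with $\lambda|_{\a_\C}$ generically nonzero (indeed, in the application it equals the dominant character $\nu|_{\a_\C}$). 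So your $\mathsf n(\lambda)$ is simply not the nilradical of a $\theta$-stable parabolic, and the inequality $\dim(\mathsf n(\lambda)\cap\s_\C)\ge r_\g$ is unjustified. Relatedly, since $\theta$ swaps $\n$ with $\bar\n$, one has $\n_\C\cap\s_\C=0$; the $\s_\C$-part of a root space $\g_\alpha\subset\n_\C$ sits in $\g_\alpha\oplus\g_{\theta\alpha}$ with $\theta\alpha\in\Delta(\h_\C,\bar\n_\C)$, so your clean block decomposition of $\mathsf n(\lambda)\cap\s_\C$ into an $\n_\C$-piece and an $\m_\C$-piece does not exist. There is a second mismatch: $r_{\m,\lambda|_{\c'_\C}}$ is defined via the $\rho$-shifted functional $\lambda|_{\c'_\C}-\rho_{\lambda|_{\c'_\C}}$ (see \eqref{rgmu}), whereas your count on $\n_\C$ uses $\lambda$ with no shift, so even on the $\m$-block your identification ``the second term is exactly $r_{\m,\lambda|_{\c'_\C}}$'' is off by the Borel-versus-parabolic discrepancy.

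The paper does not attempt a uniform construction. It first reduces to $\g$ simple, then proves (Lemma~\ref{half}) the a priori bound
\[
\sharp\{\alpha\in\Delta(\h_\C,\n_\C):\la\lambda,\alpha^\vee\ra>0\}\ge\left\lceil\tfrac{\dim\n}{2}\right\rceil
\]
via the symmetry $-\theta(\Delta(\h_\C,\n_\C))=\Delta(\h_\C,\n_\C)$ together with regularity. For most simple $\g$ this already suffices because $\lceil r'_\g/2\rceil\ge r_\g$ by the explicit Tables~\ref{tableg}--\ref{tableg3}. The exceptions are the split forms of type $A_n$, $D_n$, $E_6$, $E_7$, and for those the paper combines the half-bound with a case analysis (Lemmas~\ref{half04} and \ref{lhalf6}) exploiting that when $\p$ is maximal the adjoint action of $\check\l_\C$ on $\check\n_\C$ is irreducible, which forces either the count to be $\dim\n$ or $r_{\m,\lambda|_{\c'_\C}}\ge r_\m$. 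So the argument is fundamentally a classification-plus-table verification, not a single structural inequality.
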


Note that \eqref{regulari}  and the  second assumption of Proposition \ref{estimate0} imply that $\lambda|_{\c'_\C}$ is regular integral with respcet to $\m_\C$. Thus the quantity $r_{\m , \lambda|_{\c'_\C}}$ is defined as in \eqref{rgmu}.

\subsection{Another quantity $r'_{\g}$}\label{sectable}
As before we assume that the semisimple Lie algebra $\g$ is noncompact. Set
\[
   r'_{\g}:=\min\{\dim \n_{\q }\, : \, \q \textrm{ is  a proper parabolic subalgebra of $\g$}\},
\]
where $\n_{\q }$ denotes the nilpotent radical of $\q $. Using the explicit data of noncompact simple real Lie algebras given in \cite[Appendix C, Sections 3 and 4]{Kn}, it is routine to calculate $r'_\g$ for all  noncompact simple real Lie algebras. 
On the other hand, the quantity $r_\g$ is calculated in \cite[Table 1]{En},  \cite[Theorem 2, Theorem 3]{Ku}, \cite[Table 8.2]{VZ} and \cite[Section 2.B]{LS2}. We summarize the results in the following Tables \ref{tableg}, \ref{tableg2} and \ref{tableg3}. Cartan labels are  used to denote the noncompact noncomplex exceptional  simple real Lie algebras, as in \cite[Appendix C, Section 4]{Kn}. 

We observe that in all cases, $r_{\g}'\geq r_{\g}$.

\begin{table}[h]
\caption{Complex simple Lie algebras}\label{tableg}
\centering 
\begin{tabular}{c c c c c } 
$\g$& \vline        & $r'_{\g}$ &  \vline &  $r_{\g}$\\
\hline
$\sl_n(\C)\ (n\geq 2)$ & \vline & $2(n-1)$ & \vline & $n-1$ \\
 \hline
$\s\o_n(\C) \ (n\geq 7)$ & \vline & $2(n-2)$ & \vline & $n-2$ \\
      \hline
     $\s\p_{2n}(\C) \ (n\geq 2)$ & \vline & $2(2n-1)$ & \vline & $2n-1$ \\
  \hline
 $\e_6(\C) $ & \vline & $32 $ & \vline & $16$ \\
  \hline
$\e_7(\C) $ & \vline & $54 $ & \vline & $27 $ \\
  \hline
$\e_8(\C) $ & \vline & $114 $ & \vline & $57 $ \\
  \hline
$\f_4(\C) $ & \vline & $30 $ & \vline & $15 $ \\
  \hline
$\g_2(\C) $ & \vline & $10$ & \vline & $5$ \\
  \hline


\end{tabular}
\label{table:nonlin} 
\end{table}

\begin{table}[h]
\caption{Noncompact noncomplex simple classical Lie algebras}\label{tableg2}
\centering 
\begin{tabular}{c c c c c } 
$\g$& \vline        & $r'_{\g}$ &  \vline &  $r_{\g}$\\
\hline
$\sl_n(\R)\ (n\geq 2)$ & \vline & $n-1$ & \vline & $n-1$ \\
 \hline

 $\s\p_{2n}(\R) \ (n\geq 3)$ & \vline & $2n-1$ & \vline & $n$ \\
\hline
$\s\o(p,q) \ (p\geq q\geq 1, \, p+q\geq 5, \,(p,q)\neq (3,3))$ & \vline & $p+q-2$ & \vline & $q$ \\
 \hline
$\s\u(p,q) \ (p\geq q\geq 1,\, p+q\geq 3,\,(p,q)\neq  (2,2))$ & \vline & $2(p+q)-3$ & \vline & $q$ \\
\hline
$\s\p(p,q) \ (p\geq q\geq 1,\, p+q\geq 3, \,(p,q)\neq (2,2))$ & \vline & $4(p+q)-5$ & \vline & $2q$ \\
   \hline

 $\sl_n(\BH)\ (n\geq 4)$ & \vline & $4(n-1)$ & \vline & $2(n-1)$ \\
\hline
  $\s\o^*(2n) \ (n=5,7,8,9,\cdots)$ & \vline & $4n-7$ & \vline & $n-1$ \\
       \hline
$\sl_3(\BH)$ & \vline & $8$ & \vline & $3$ \\
   \hline
$\s\o^*(12) $ & \vline & $15$ & \vline & $5$ \\
     \hline
 $\s\p(2,2) $ & \vline & $10$ & \vline & $4$ \\
 \hline

\end{tabular}
\label{table:nonlin} 
\end{table}

\begin{table}[h]
\caption{Noncompact noncomplex exceptional simple Lie algebras}\label{tableg3}
\centering 
\begin{tabular}{c c c c c } 
$\g$& \vline        & $r'_{\g}$ &  \vline &  $r_{\g}$\\
\hline
E I & \vline & $16$ & \vline & $11$ \\
 \hline
E II & \vline & $21$ & \vline & $8$ \\
 \hline
E III & \vline & $21$ & \vline & $6$ \\
 \hline
E IV & \vline & $16$ & \vline & $6$ \\
 \hline
E V & \vline & $ 27 $ & \vline & $15$ \\
 \hline
E VI & \vline & $33$ & \vline & $12$ \\
 \hline
E VII & \vline & $27$ & \vline & $11$ \\
 \hline
E VIII & \vline & $57$ & \vline & $29$ \\
 \hline
E IX  & \vline & $57$ & \vline & $24 $ \\
     \hline
 F I & \vline & $15$ & \vline & $8$ \\
 \hline
 F II & \vline & $15$ & \vline & $4$ \\
 \hline
G & \vline & $5$ & \vline & $3$ \\
 \hline


\end{tabular}
\label{table:nonlin} 
\end{table}

\subsection{A proof of Proposition \ref{estimate0}}

We continue with the notation and the assumptions of Proposition \ref{estimate0}. For the proof of Proposition \ref{estimate0}, we assume without loss of generality that $\g$ is  simple.

\begin{lemp}\label{half}
  One has that
\[
  \sharp\{\alpha\in \Delta(\h_\C, \n_\C)\, : \, \la \lambda, \alpha^\vee\ra>0\}\geq\left\lceil\frac{\dim \n }{2}\right\rceil.
\]
\end{lemp}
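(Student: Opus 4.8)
The plan is to pair off the roots in $\Delta(\h_\C,\n_\C)$ under an involution built from the Cartan involution $\theta$, and to show that each pair, as well as each fixed point, contributes at least one root to the set whose size we must bound from below.

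First I would set up the involution. Since $\p$ and $\theta(\p)$ are opposite parabolic subalgebras sharing the Levi factor $\l=\p\cap\theta(\p)$, the nilradical $\theta(\n)$ is opposite to $\n$, so $\Delta(\h_\C,\theta(\n_\C))=-\Delta(\h_\C,\n_\C)$ and
\[
  \sigma\colon \Delta(\h_\C,\n_\C)\to\Delta(\h_\C,\n_\C),\qquad \sigma(\alpha):=-\theta(\alpha),
\]
is a well-defined involution. As $\h_\C$ is a Cartan subalgebra of $\g_\C$ all root spaces are one-dimensional, so $\dim\n=\sharp\,\Delta(\h_\C,\n_\C)$; thus it suffices to analyze how $\sigma$ interacts with the set $\{\alpha\in\Delta(\h_\C,\n_\C):\la\lambda,\alpha^\vee\ra>0\}$.

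The heart of the matter is the identity $\theta(\lambda)=\lambda-2\,\lambda|_{\a_\C}$. This I would obtain by decomposing $\lambda$ along $\breve\h_\C=\breve\c_\C\oplus\breve\a_\C\oplus\breve\c'_\C\oplus\breve\a'_\C$, using that $\theta$ acts by $+1$ on $\breve\c_\C\oplus\breve\c'_\C$ and by $-1$ on $\breve\a_\C\oplus\breve\a'_\C$, together with the hypothesis $\lambda|_{\a'_\C}=0$. Since $\theta$ is an isometry of the Killing form, it is compatible with the passage from roots to coroots and with the pairing, so $\la\lambda,\sigma(\alpha)^\vee\ra=-\la\theta(\lambda),\alpha^\vee\ra$, and therefore
\[
  \la\lambda,\alpha^\vee\ra+\la\lambda,\sigma(\alpha)^\vee\ra=2\,\la\lambda|_{\a_\C},\alpha^\vee\ra\ \ge\ 0
\]
for every $\alpha\in\Delta(\h_\C,\n_\C)$, by the dominance hypothesis of Proposition \ref{estimate0}. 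Now I would split into cases. If $\alpha=\sigma(\alpha)$, then $\theta(\alpha)=-\alpha$, so $\alpha^\vee$ lies in the $(-1)$-eigenspace $\a_\C\oplus\a'_\C$ of $\theta$ and hence $\la\lambda,\alpha^\vee\ra=\la\lambda|_{\a_\C},\alpha^\vee\ra$, which is $\ge0$ by dominance and nonzero by the regularity \eqref{regulari}, hence $>0$. If $\{\alpha,\sigma(\alpha)\}$ has two elements, both $\la\lambda,\alpha^\vee\ra$ and $\la\lambda,\sigma(\alpha)^\vee\ra$ are nonzero by \eqref{regulari} and sum to a nonnegative number, so at least one of them is positive.

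Finally I would count: if $\sigma$ has $f$ fixed points and $p$ two-element orbits, then $\dim\n=f+2p$, while the case analysis gives
\[
  \sharp\{\alpha\in\Delta(\h_\C,\n_\C):\la\lambda,\alpha^\vee\ra>0\}\ \ge\ f+p\ \ge\ \left\lceil\tfrac{f}{2}\right\rceil+p\ =\ \left\lceil\tfrac{\dim\n}{2}\right\rceil.
\]
I do not expect a genuine obstacle: the argument rests entirely on choosing the involution $\alpha\mapsto-\theta(\alpha)$ and on the reduction $\theta(\lambda)=\lambda-2\,\lambda|_{\a_\C}$, and it is exactly in this reduction that \emph{both} hypotheses of Proposition \ref{estimate0} are consumed (without $\lambda|_{\a'_\C}=0$ one gets an extra term, and without dominance the reverse inequality could hold on some orbits). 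The only points needing a moment's care are the $\theta$-equivariance of the correspondence between roots and coroots and of the pairing $\la\cdot,\cdot\ra$, and the one-dimensionality of the root spaces which makes $\dim\n$ literally the number of roots occurring in $\n_\C$.
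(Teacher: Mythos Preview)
Your proof is correct and uses essentially the same idea as the paper: the involution $\alpha\mapsto-\theta(\alpha)$ on $\Delta(\h_\C,\n_\C)$, together with the dominance of $\lambda|_{\a_\C}$, the vanishing of $\lambda|_{\a'_\C}$, and the regular integrality of $\lambda$. The only difference is in packaging: the paper splits $\lambda$ into its $\theta$-fixed part $\lambda|_{\c_\C\oplus\c'_\C}$ and its $\theta$-antifixed part $\lambda|_{\a_\C\oplus\a'_\C}$, shows the involution forces $\sharp\{\la\lambda|_{\c_\C\oplus\c'_\C},\alpha^\vee\ra\ge0\}\ge\tfrac{1}{2}\dim\n$, and then adds the nonnegative $\a$-part and invokes regularity; you instead compute the orbit sum $\la\lambda,\alpha^\vee\ra+\la\lambda,\sigma(\alpha)^\vee\ra=2\la\lambda|_{\a_\C},\alpha^\vee\ra\ge0$ directly and count orbit by orbit. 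Both routes are short and equivalent; your orbit-by-orbit count is perhaps slightly more transparent in that it makes explicit exactly which hypothesis is used at which step.
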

\begin{proof}
Note that the parabolic subalgebra  $\theta(\p)$ is opposite to $\p$. Thus
\[
  \Delta(\h_\C, \theta(\n_\C))= -\Delta(\h_\C, \n_\C),
\] 
and consequently
\[
  -\theta(\Delta(\h_\C, \n_\C))=\Delta(\h_\C, \n_\C),
\]
where $\theta: \breve \h_\C\rightarrow \breve \h_\C$ is the linear map induced by $\theta: \g_\C\rightarrow \g_\C$. 
This implies that 
\[
   \sharp\{\alpha\in \Delta(\h_\C, \n_\C)\, : \,   \la \lambda|_{\c_\C\oplus \c'_\C}, \alpha^\vee\ra> 0\}= \sharp\{\alpha\in \Delta(\h_\C, \n_\C)\, : \,   \la \lambda|_{\c_\C\oplus \c'_\C}, \alpha^\vee\ra<0\},
   \]
   and consequently, 
   \[
    \sharp\{\alpha\in \Delta(\h_\C, \n_\C)\, : \,   \la \lambda|_{\c_\C\oplus \c'_\C}, \alpha^\vee\ra\geq 0\}\geq \frac{\dim \n }{2}. 
   \]

Recall that $\lambda$ is regular integral, and note that 
\[
  \la \lambda|_{\a_\C\oplus \a'_\C}, \alpha^\vee\ra\geq 0, \quad \textrm{for all }\alpha\in \Delta(\h_\C, \n_\C).
\]
Therefore we have that
\[
   \sharp\{\alpha\in \Delta(\h_\C, \n_\C)\, : \, \la \lambda, \alpha^\vee\ra> 0\}=\sharp\{\alpha\in \Delta(\h_\C, \n_\C)\, : \, \la \lambda, \alpha^\vee\ra\geq 0\}\geq \frac{\dim \n }{2}.
\]
This proves  the lemma.

\end{proof}

\begin{lemp}\label{lhalf}
If $\g$ is not isomorphic to a split simple real Lie algebra of type $A_n$ ($n\geq 2$), $D_n$ ($n\geq 4$), $E_6$ or $E_7$, then
\[
  \left\lceil\frac{r_{\g}'}{2}\right\rceil \geq r_{\g}.
\]
\end{lemp}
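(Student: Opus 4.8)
The plan is to prove the inequality by direct inspection of Tables \ref{tableg}, \ref{tableg2} and \ref{tableg3}, which record $r'_\g$ and $r_\g$ for every noncompact simple real Lie algebra; since we already know $r'_\g\geq r_\g$ in all cases, the content is the improvement by a factor of roughly two, which survives precisely after discarding the four families of split algebras named in the statement. I would organize the check in three blocks: $\g$ complex, $\g$ noncompact noncomplex classical, and $\g$ noncompact noncomplex exceptional.

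For the complex simple Lie algebras of Table \ref{tableg} one reads off $r'_\g=2r_\g$ in every case, so $\lceil r'_\g/2\rceil=r_\g$ with equality (structurally, for $\g=(\g_0)_\R$ with $\g_0$ complex simple the ``$\s_\C$-half'' of a $\theta$-stable parabolic descending from a real parabolic is exactly half of its nilpotent radical, but the table already records this). For the classical algebras of Table \ref{tableg2} I would first set aside $\sl_m(\R)$ with $m\geq 3$ and $\s\o(n,n)$ with $n\geq 4$, which are precisely the classical cases excluded in the statement, and then verify the remaining families by one elementary inequality apiece: $\sl_2(\R)$ gives $\lceil 1/2\rceil=1=r_\g$; $\s\p_{2n}(\R)$ ($n\geq 3$) gives $\lceil(2n-1)/2\rceil=n=r_\g$; for $\s\o(p,q)$ with $p>q$ one has $p+q-2\geq 2q-1$, hence $\lceil(p+q-2)/2\rceil\geq q$; for $\s\u(p,q)$, $\lceil(2(p+q)-3)/2\rceil=p+q-1\geq q$; for $\s\p(p,q)$, $\lceil(4(p+q)-5)/2\rceil=2(p+q)-2\geq 2q$; for $\sl_n(\BH)$ ($n\geq 4$), $\lceil 4(n-1)/2\rceil=2(n-1)=r_\g$; for $\s\o^*(2n)$, $\lceil(4n-7)/2\rceil=2n-3\geq n-1$ since $n\geq 2$; and for the three sporadic entries one checks $4\geq 3$, $8\geq 5$, $5\geq 4$ for $\sl_3(\BH)$, $\s\o^*(12)$, $\s\p(2,2)$ respectively. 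Finally, for the exceptional algebras of Table \ref{tableg3} I would set aside E I and E V (split $E_6$ and $E_7$) and read off the remaining entries: E II ($11\geq 8$), E III ($11\geq 6$), E IV ($8\geq 6$), E VI ($17\geq 12$), E VII ($14\geq 11$), E VIII ($29\geq 29$), E IX ($29\geq 24$), F I ($8\geq 8$), F II ($8\geq 4$), G ($3\geq 3$).

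There is no genuine obstacle here beyond the bookkeeping; the exclusions are forced, since for $\sl_m(\R)$ ($m\geq 3$) one has $\lceil(m-1)/2\rceil<m-1$, for $\s\o(n,n)$ ($n\geq 4$) one has $\lceil(2n-2)/2\rceil=n-1<n$, and for E I and E V one has $8<11$ and $14<15$. The one point worth flagging is that the bound is attained with equality in several of the surviving cases — all complex algebras, $\s\p_{2n}(\R)$, the split form $\s\o(n+1,n)$ inside the $\s\o(p,q)$ family, $\sl_n(\BH)$, E VIII, F I and G — so the argument leaves no slack and the list of exceptions is exactly right.
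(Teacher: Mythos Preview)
Your proposal is correct and follows exactly the paper's approach: the paper's proof is simply ``This follows from Tables \ref{tableg}, \ref{tableg2} and \ref{tableg3}'', and you have carried out that table inspection explicitly and accurately, including the identification of the four excluded families as precisely the cases where the inequality fails.
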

\begin{proof}
This follows form Tables \ref{tableg}, \ref{tableg2} and \ref{tableg3}.
\end{proof}

By Lemma \ref{half} and Lemma \ref{lhalf}, Proposition \ref{estimate0} holds when $\g$ is not isomorphic to a split simple real Lie algebra of type $A_n$ ($n\geq 2$), $D_n$ ($n\geq 4$), $E_6$ or $E_7$.

We will need the following general result.
\begin{lemp}\label{lhalfrho}
Let $\l'_\C$ be a reductive finite dimensional complex Lie algebra, and let $F'$ be an irreducible finite dimensional representation of $\l'_\C$. Let $\h'_\C$ be  a Cartan subalgebra of $\l'_\C$ and fix a positive system $\Delta(\h'_\C, \l'_\C)^+$ of the root system $\Delta(\h'_\C, \l'_\C)$. Write $\rho'^\vee\in \h'_\C$ for the half sum of the positive coroots associated to $\Delta(\h'_\C, \l'_\C)^+$. Then 
the set of eigenvalues of $\rho'^\vee$ on $F'$ has the form 
\[
   b_0+\{j\in \Z\, :\, 0\leq j\leq c_0\},
\]
where $b_0\in \frac{1}{2} \Z$, and $c_0\geq 0$ is an  integers. 
\end{lemp}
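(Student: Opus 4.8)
The plan is to reduce immediately to the case where $\l'_\C$ is simple. Indeed, writing $\l'_\C=\z'_\C\oplus\l'_{1,\C}\oplus\cdots\oplus\l'_{k,\C}$ with $\z'_\C$ the center and $\l'_{i,\C}$ the simple ideals, we have $\rho'^\vee=\sum_i\rho_i'^\vee$ with each $\rho_i'^\vee\in\h'_\C\cap\l'_{i,\C}$, and $F'=F'_1\otimes\cdots\otimes F'_k$ (tensored with a character of $\z'_\C$, on which $\rho'^\vee$ vanishes) as a representation of $\l'_\C$. The set of eigenvalues of $\rho'^\vee$ on $F'$ is then the Minkowski sum of the eigenvalue sets of the $\rho_i'^\vee$ on the $F'_i$. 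Since a Minkowski sum of sets of the form $b_i+\{0,1,\dots,c_i\}$ is again of that form (with $b_0=\sum b_i$, $c_0=\sum c_i$), it suffices to prove the claim for each simple factor. So assume $\l'_\C$ is simple.

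The key point for a simple $\l'_\C$ is that $\rho'^\vee$ is an $\sl_2$-element, or more precisely that it can be completed to an $\sl_2$-triple. Recall $\rho'^\vee\in\h'_\C$ satisfies $\la\rho'^\vee,\alpha\ra=1$ for every simple root $\alpha$ in $\Delta(\h'_\C,\l'_\C)^+$; this is precisely the semisimple element $h$ of the principal $\sl_2$-triple $(e,h,f)$ in $\l'_\C$ (in Kostant's normalization, $h=\sum_{\alpha>0}h_\alpha$ where $h_\alpha$ is the coroot — here I am using that $\rho'^\vee$ as defined is the half sum of positive coroots, and for the principal triple one checks $[h,e_\alpha]=2e_\alpha$ for simple $\alpha$). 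The first step is therefore to exhibit $e,f$ with $[h,e]=2e$, $[h,f]=-2f$, $[e,f]=h$ for $h=\rho'^\vee$; this is the standard principal-$\sl_2$ construction.

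With this $\sl_2$-triple in hand, $F'$ decomposes under $\operatorname{span}\{e,h,f\}\cong\sl_2(\C)$ into irreducible subrepresentations. By $\sl_2$-representation theory, each irreducible $\sl_2$-summand contributes to the eigenvalues of $h$ an arithmetic progression of the form $\{-m,-m+2,\dots,m\}$ (all integers if $m$ is even, all half-integers of a fixed parity... — actually all of the same parity as $m$, but since $h=\rho'^\vee$ has half-integer or integer eigenvalues consistently, the parities align). The eigenvalues of $\rho'^\vee$ on the \emph{whole} of $F'$ form the union of these progressions; since they all have the same step $2$ and the same parity (fixed by the highest weight of $F'$ paired with $\rho'^\vee$), their union is a single arithmetic progression with step $2$, hence after the obvious affine rescaling has the asserted form $b_0+\{0,1,\dots,c_0\}$ — one must only note $b_0\in\frac12\Z$, which holds because $\rho'^\vee$ is half an integral element (half the sum of coroots, and coroots are integral), so all its eigenvalues on any finite-dimensional representation lie in $\frac12\Z$. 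Finally, one checks the set is an \emph{interval} of such values with no gaps: this follows because the longest $\sl_2$-string through $F'$, namely the one containing the highest $h$-weight vector, already realizes every value from the top down to its negative in steps of $2$ — the highest $h$-weight of $F'$ is $\la\Lambda,\rho'^\vee\ra$ where $\Lambda$ is the highest weight, and the weight string from $\Lambda$ down via subtracting the highest root repeatedly (which decreases the $h$-eigenvalue by $2\la\text{highest root},\rho'^\vee\ra/\ldots$) — more cleanly, since $F'$ is a single irreducible $\l'_\C$-module and the principal $\sl_2$ acts with the property that the $h$-weight multiplicities are symmetric and "unimodal enough," the set of $h$-eigenvalues has no interior gaps.

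The main obstacle I anticipate is this last "no gaps" assertion: a priori the union of several $\sl_2$-strings of differing lengths could leave a hole only if the shortest strings were isolated, but since the longest string spans from the maximal $h$-eigenvalue to its negative, and the maximal eigenvalue dominates (in absolute value) every eigenvalue appearing, the longest string already covers the full range $[-M,M]$ in steps of $2$; hence there are no gaps and $c_0=M-\lceil\text{min eigenvalue}\rceil$ works, with $b_0$ the minimal eigenvalue. The cleanest way to nail this down is: the set of $h=\rho'^\vee$ eigenvalues on $F'$ is $\{\la w\Lambda+\text{(sum of some positive roots)},\rho'^\vee\ra : \ldots\}$ — better, just invoke that for the principal $\sl_2$, every $h$-weight space of an irreducible $\l'_\C$-module between $-M$ and $M$ (of the right parity) is nonzero, which is a known consequence of the fact that the principal nilpotent $e$ acts on $F'$ as a single Jordan block of size $M+1$ on the... no — rather, of the fact that the $h$-weight multiplicities of $F'$ are given by a symmetric unimodal sequence (Hesselink / Kostant), so they are all positive on the full range. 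I would cite Kostant's work on the principal $\sl_2$ for the unimodality/no-gap statement rather than reprove it.
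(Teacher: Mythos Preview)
Your approach is essentially correct but takes a genuinely different and considerably heavier route than the paper. One minor slip: the semisimple element $h$ of the principal $\sl_2$-triple satisfies $\alpha(h)=2$ for simple roots $\alpha$, so $h=2\rho'^\vee$, not $\rho'^\vee$. This is harmless (the $\rho'^\vee$-eigenvalues are just half the $h$-eigenvalues, so the step becomes $1$ instead of $2$), but it makes your parity discussion cleaner than you realized: all $\rho'^\vee$-eigenvalues on $F'$ lie in $b_0+\Z$ simply because any two weights of $F'$ differ by an element of the root lattice and $\la\rho'^\vee,\alpha_i\ra=1$ for every simple root. Your ``no gaps'' step then rests on the unimodality of principal-$\sl_2$ weight multiplicities, which is a nontrivial theorem of Kostant that you would need to cite.

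The paper's proof avoids all of this machinery. It does not reduce to simple factors, does not build any $\sl_2$-triple, and does not invoke unimodality. Instead it argues directly: let $v_0$ be a lowest weight vector and $X_1,\ldots,X_k$ root vectors for the simple roots. Then $F'$ is spanned by the vectors $(X_{j_1}\cdots X_{j_r}).v_0$, and since $\la\rho'^\vee,\alpha_{j}\ra=1$ each such nonzero vector has $\rho'^\vee$-eigenvalue exactly $b_0+r$. The ``no gaps'' assertion is then immediate: if $(X_{j_1}\cdots X_{j_r}).v_0\neq 0$ then $(X_{j_2}\cdots X_{j_r}).v_0\neq 0$, so every integer between $0$ and $c_0$ occurs as an $r$. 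This three-line argument gives exactly what you obtained via the principal $\sl_2$ and Kostant, and is entirely self-contained.
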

\begin{proof}
Let $\alpha_1, \alpha_2, \cdots, \alpha_k$ ($k\geq 0$) be the simple roots in $\Delta(\h'_\C, \l'_\C)^+$.
 Let $X_j$ be a root vector attached to $\alpha_j$ ($1\leq j\leq k$). Let $v_0$ be a lowest weight vector in $F'$. Then
\[
\rho'^\vee.v_0=b_0 v_0
\]
for some $b_0\in \frac{1}{2} \Z$. Note that $F'$ is spanned by vectors of the form 
\[
  (X_{j_1} X_{j_2} \cdot \ldots \cdot X_{j_r}).v_0, 
\]
where $r\geq 0$, $1\leq j_1, j_2, \ldots, j_r\leq k$. The lemma then easily follows by noting that
\[
   \rho'^\vee.((X_{j_1} X_{j_2} \cdot \ldots \cdot X_{j_r}).v_0)=(b_0+r)((X_{j_1} X_{j_2} \cdot \ldots \cdot X_{j_r}).v_0).
\]  

\end{proof}

Let $\Check{\g}_\C$ denote the Langlands dual Lie algebra of $\g_\C$, with Cartan subalgebra $\check \h_\C$ equals the dual of $\h_\C$.
Let $\check \p_\C=\check \l_\C\ltimes \check \n_\C$ denote the parabolic subalgebras of $\check \g_\C$ corresponding to $\p_\C$, where $\check \l_\C\supset \check \h_\C$ is the Levi factor corresponding to $\l_\C$,  and $\check \n_\C$ is the nilpotent radical corresponding to $\n_\C$. Then $\check \m_\C:=[\check \l_\C, \check \l_\C]$ is the Langlands dual of $\m_\C$. 

\begin{lemp}\label{half04}
Assume that $\m $ is nonzero and contains no compact simple ideal, and that the adjoint representation of $\check \l_\C$ on $\check \n_\C$ is irreducible. Then
\[
 \sharp\{\alpha\in \Delta(\h_\C, \n_\C)\, : \, \la \lambda, \alpha^\vee\ra> 0\}=\dim \n ,
\]
or
\[
  r_{\m , \lambda|_{\c'_\C}} \geq r_{\m }.
\]
\end{lemp}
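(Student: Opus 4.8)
The plan is to read off rigid structural information from the hypothesis, dispose of the case where the second alternative holds for trivial reasons, and reduce the remaining case to a statement about a block of consecutive integers.

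First I would record the structural consequences of irreducibility. Since $[\check\n_\C,\check\n_\C]$ and the pieces of $\check\n_\C$ graded by the coefficients of the simple roots of $\check\g_\C$ not in $\check\l_\C$ are all $\check\l_\C$-submodules, irreducibility forces $\check\n_\C$ to be abelian and $\check\p_\C$ to be a maximal parabolic; dually $\p_\C$, and hence $\p$, is maximal, so $\dim\a=1$. Moreover the weights of the irreducible $\check\l_\C$-module $\check\n_\C$ are exactly $\{\alpha^\vee:\alpha\in\Delta(\h_\C,\n_\C)\}$, and as the centre $\z_\C=\c_\C\oplus\a_\C$ of $\l_\C$ — which is also the centre of $\check\l_\C$ — acts on $\check\n_\C$ by a scalar character, the $\z_\C$-component of $\alpha^\vee$ is independent of $\alpha$. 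Combining this with the involution $\alpha\mapsto-\theta\alpha$ of $\Delta(\h_\C,\n_\C)$ (which restricts to the identity on $\c_\C$) I would conclude that the $\c_\C$-component of every $\alpha^\vee$ vanishes. Using also $\lambda|_{\a'_\C}=0$, this gives, for each $\alpha\in\Delta(\h_\C,\n_\C)$,
\[
  \la\lambda,\alpha^\vee\ra=A+b_\alpha,\qquad A:=\la\lambda|_{\a_\C},\alpha^\vee\ra,\quad b_\alpha:=\la\lambda|_{\c'_\C},\alpha^\vee\ra,
\]
where, by the dominance assumption and $\dim\a=1$, $A\ge 0$ is a fixed number independent of $\alpha$.

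Next I would dispose of the trivial alternative. Put $\mu:=\lambda|_{\c'_\C}$ and let $\n(\mu)$ be the $\theta$-stable nilpotent subalgebra of $\m_\C$ entering the definition of $r_{\m,\mu}$, with $\q(\mu)\subseteq\m_\C$ its normalizer (a $\theta$-stable parabolic). For each simple ideal $\m'$ of $\m$ — all noncompact by hypothesis — the intersection $\q(\mu)\cap\m'_\C$ is a $\theta$-stable parabolic of $\m'_\C$ with nilradical $\n(\mu)\cap\m'_\C$; if it is proper for some $\m'$ then $r_{\m,\mu}=\dim(\n(\mu)\cap\s_\C)\ge\dim(\n(\mu)\cap\m'_\C\cap\s_\C)\ge r_{\m'}\ge r_\m$, which is the desired conclusion. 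Hence it remains to handle the case $\q(\mu)=\m_\C$, which, as $\mu$ is regular with respect to $\m_\C$, is equivalent to $\mu=\rho_\mu$. In this case I must show $\la\lambda,\alpha^\vee\ra>0$ for all $\alpha\in\Delta(\h_\C,\n_\C)$. Here I would use that the $\mu$-positive system of $\m_\C$ is $\theta$-stable, so $\rho_\mu$ is $\theta$-fixed; therefore the half sum $\rho_{\m_\C}$ of the $\mu$-positive roots of $\m_\C$ taken with respect to the full Cartan subalgebra $\c'_\C\oplus\a'_\C$ vanishes on $\a'_\C$, and since $\rho_{\m_\C}|_{\c'_\C}=\rho_\mu=\mu=\lambda|_{\c'_\C}$ we get $\lambda|_{\c'_\C\oplus\a'_\C}=\rho_{\m_\C}$. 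As $\check\n_\C$ is also irreducible over $\check\m_\C$, the number $b_\alpha=\la\lambda|_{\c'_\C\oplus\a'_\C},\alpha^\vee\ra$ is precisely the eigenvalue of $\rho_{\m_\C}$ (the half sum of the positive coroots of $\check\m_\C$) on the corresponding weight space of the irreducible $\check\m_\C$-module $\check\n_\C$. By Lemma \ref{lhalfrho} the set $\{b_\alpha:\alpha\in\Delta(\h_\C,\n_\C)\}$ consists of consecutive integers, and by the $-\theta$-symmetry recorded above it is symmetric about $0$, hence equals $\{-B,-B+1,\dots,B\}$ for some $B>0$ (one has $B>0$ because $\m\neq 0$ forces $\dim\n\ge 2$, so $\check\n_\C$ is a nontrivial $\check\m_\C$-module). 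Consequently $\la\lambda,\alpha^\vee\ra=A+b_\alpha$ takes every value in $\{A-B,A-B+1,\dots,A+B\}$, each of which is a nonzero integer by \eqref{regulari}; if $A\le B$ then $0$ would lie in this block, a contradiction, so $A-B\ge 1$ and $\la\lambda,\alpha^\vee\ra\ge A-B\ge 1>0$ for every $\alpha$.

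The main obstacle is the last step: one has to see that, once the compact-centre components have collapsed and $\lambda|_{\c'_\C\oplus\a'_\C}$ has been identified with $\rho_{\m_\C}$, the numbers $\la\lambda,\alpha^\vee\ra$ form an unbroken block of consecutive integers centred at $A$. This is where Lemma \ref{lhalfrho}, the $\theta$-stability of $\Delta(\h_\C,\n_\C)$, and the self-duality bookkeeping between $\g_\C$ and $\check\g_\C$ all have to be combined carefully; after that, the integrality and regularity of $\lambda$ finish the argument at once.
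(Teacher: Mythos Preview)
Your proof is correct and follows essentially the same route as the paper: both split off the case $\lambda|_{\c'_\C}\neq\rho_{\lambda'}$ (where the hypothesis on $\m$ gives $r_{\m,\lambda|_{\c'_\C}}\ge r_\m$), and in the remaining case both combine Schur's lemma with Lemma~\ref{lhalfrho} to see that $\{\langle\lambda,\alpha^\vee\rangle:\alpha\in\Delta(\h_\C,\n_\C)\}$ is a block of consecutive nonzero integers. The only difference is the final step: the paper invokes Lemma~\ref{half} to exclude the all-negative possibility, whereas you observe (via the $-\theta$-symmetry) that the block is centred at $A\ge0$ and hence must lie entirely to the right of $0$ --- a self-contained variant that avoids the appeal to Lemma~\ref{half}.
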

\begin{proof}
For simplicity write $\lambda':=\lambda|_{\c'_\C}$. It is regular integral with respect to $\m_\C$. As in  \eqref{mup} we define the positive root system $\Delta(\c'_\C, \m_\C)^{\lambda',+}$, and write 
 $\rho_{\lambda'} \in \breve{\c'}_\C$ for the half sum of the positive roots (counted with multiplicities).

If $\lambda'\neq \rho_{\lambda'}$, then the assumption on $\m$ implies that 
\[
  r_{\m , \lambda'} \geq r_{\m }.
\]
So we assume that
$\lambda'= \rho_{\lambda'}$. 

Write
\[
  \lambda= \lambda|_{\c_\C\oplus \a_\C}+ \lambda|_{\c'_\C\oplus \a'_\C}.
\]
Note that $\check \h_\C=\breve \h_\C$, and $\breve \c_\C\oplus \breve \a_\C$ is identified with the center of $\check \l_\C$. By Schur's lemma, the irreducibility assumption implies that 
$\lambda|_{\c_\C\oplus \a_\C}\in \breve \c_\C\oplus \breve \a_\C\subset \check \l_\C$ acts on $\check \n_\C$ by the multiplication of a constant $a_0\in \C$. Thus
\be\label{a0}
   \la \lambda|_{\c_\C\oplus \a_\C}, \alpha^\vee\ra=a_0\qquad \textrm{for all }\alpha\in \Delta(\h_\C, \n_\C).
\ee

Note that \[
\rho_{\lambda'}\in \breve \c'_\C\subset  \breve \h_\C
\]
is also the half sum of a positive system of the root system $\Delta(\h_\C, \l_\C)$. We identify it with an element of $\check \h_\C$ which is the half sum of the coroots associated to a 
positive system of the root system $\Delta(\check \h_\C, \check \l_\C)$.  Then by Lemma \ref{lhalfrho}, 
\be\label{b0}
  \{\la \rho_{\lambda'}, \alpha^\vee\ra \, : \, \alpha\in \Delta(\h_\C, \n_\C) \}=b_0+\{j\in \Z \, : \, 0\leq j\leq c_0\}, 
\ee
where $b_0\in \frac{1}{2} \Z$, and $c_0\geq 0$ is an  integers. 

Since $\lambda|_{\c'_\C}= \rho_{\lambda'}$ and $\lambda|_{\a'_\C}=0$, by combining \eqref{a0} and \eqref{b0}, we get that
\[
   \{\la \lambda, \alpha^\vee\ra \, : \, \alpha\in \Delta(\h_\C, \n_\C) \}=a_0+b_0+\{j\in \Z \, : \, 0\leq j\leq c_0\}.
\]
Because $\lambda$ is regular integral,  the above set either consists of only positive integers, or consists only negative integers. By  Lemma \ref{half}, it has to consists only of positive integers. 
This proves the lemma.
\end{proof}

\begin{lemp}\label{lhalf6}
(a).  Assume that $\g=\sl_n(\R)$ ($n\geq 3$). If $\p $ is not a maximal parabolic subalgebra, then
\[
  \left\lceil\frac{\dim \n }{2}\right\rceil\geq r_{\g}.
\]
If $\p $ is a maximal parabolic subalgebra, then
\[
  \left\lceil\frac{\dim \n }{2}\right\rceil+r_{\m }\geq r_{\g}.
\]

(b) Assume that $\g=\s\o(n,n)$ ($n\geq 4$). If $\m $ is not isomorphic to $\s\o(n-1,n-1)$ or $\sl_{n}(\R)$, then
\[
  \left\lceil\frac{\dim \n }{2}\right\rceil\geq r_{\g}.
\]
If $\m $ is isomorphic to $\s\o(n-1,n-1)$ or $\sl_{n-1}(\R)$, then
\[
  \left\lceil\frac{\dim \n }{2}\right\rceil+r_{\m }\geq r_{\g}.
\]

(c) Assume that $\g$ is split of type $E_6$. If $\m $ is not isomorphic to $\s\o(5,5)$, then
\[
  \left\lceil\frac{\dim \n }{2}\right\rceil\geq r_{\g}.
\]
If $\m $ is isomorphic to $\s\o(5,5)$, then
\[
  \left\lceil\frac{\dim \n }{2}\right\rceil+r_{\m }\geq r_{\g}.
\]

(d) Assume that $\g$ is split of type $E_7$. If $\m $ is not isomorphic to the split simple Lie algebra of type $E_6$, then
\[
  \left\lceil\frac{\dim \n }{2}\right\rceil\geq r_{\g}.
\]
If $\m $ is isomorphic to the split simple Lie algebra of type $E_6$, then
\[
  \left\lceil\frac{\dim \n }{2}\right\rceil+r_{\m }\geq r_{\g}.
\]

\end{lemp}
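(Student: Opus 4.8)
The plan is to carry out, separately for each of the four types, a finite case analysis over the parabolic subalgebras. Since $\g$ is split, up to conjugacy a proper parabolic subalgebra $\p=\l\ltimes\n$ is determined by a proper subset $S$ of a fixed base of simple roots, with $\l$ generated by $\h$ and the root spaces of the roots in the span of $S$; then $\dim\n$ equals the number of positive roots of $\g$ minus the number of positive roots of the subsystem spanned by $S$, and $\m=[\l,\l]$ is the split semisimple Lie algebra with Dynkin diagram $S$. For each $\p$ one compares $\lceil\dim\n/2\rceil$ with $r_\g$, whose value for the four relevant split forms is $n-1$ for $\sl_n(\R)$, $n$ for $\s\o(n,n)$, $11$ for the split form of type $E_6$, and $15$ for the split form of type $E_7$, by Tables \ref{tableg2} and \ref{tableg3}. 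The goal is to show that $\lceil\dim\n/2\rceil\ge r_\g$ for all $\p$ outside a short explicit list, and that for the parabolics on that list $\m$ is one of the Lie algebras named in the statement, so that the weaker inequality $\lceil\dim\n/2\rceil+r_\m\ge r_\g$ follows immediately from the corresponding value of $r_\m$ in the same tables.

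For part (a) a parabolic of $\sl_n(\R)$ corresponds to a composition $n=n_1+\dots+n_k$ (with $k\ge2$), where $\dim\n=\sum_{i<j}n_in_j$ and $\m\cong\sl_{n_1}(\R)\oplus\dots\oplus\sl_{n_k}(\R)$. If $\p$ is not maximal then $k\ge3$; the minimum of $\sum_{i<j}n_in_j$ over such compositions is $2n-3$, attained at $(1,1,n-2)$, so $\lceil\dim\n/2\rceil\ge\lceil(2n-3)/2\rceil=n-1=r_\g$. If $\p$ is maximal, write $n=a+(n-a)$ with $a\le n-a$; then $\dim\n=a(n-a)$, and $r_\m=a-1$ when $a\ge2$, while for $a=1$ one has $\m\cong\sl_{n-1}(\R)$ and $r_\m=n-2$. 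In either case the elementary inequality $\lceil a(n-a)/2\rceil+r_\m\ge n-1$ holds: for $a\ge2$ it reduces to $a(n-a)/2\ge n-a$, and for $a=1$ to $\lceil(n-1)/2\rceil\ge1$.

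For parts (b), (c), (d) one runs through the maximal parabolics, indexed by the omitted simple root in the Bourbaki labelling. In each of these three cases exactly one maximal parabolic fails $\lceil\dim\n/2\rceil\ge r_\g$. For $\s\o(n,n)$ it is the one omitting the end node $\alpha_1$, with $\m\cong\s\o(n-1,n-1)$ and $\dim\n=2n-2$; since $r_{\s\o(n-1,n-1)}=n-1$ for every $n\ge4$ (using $\s\o(3,3)\cong\sl_4(\R)$ when $n=4$), this gives $(n-1)+(n-1)\ge n$. For the split $E_6$ it is the one omitting $\alpha_1$ (equivalently $\alpha_6$, by the diagram symmetry), with $\m\cong\s\o(5,5)$ and $\dim\n=36-20=16$, so that $8+r_{\s\o(5,5)}=8+5=13\ge11$. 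For the split $E_7$ it is the one omitting $\alpha_7$, with $\m$ the split form of type $E_6$ and $\dim\n=63-36=27$, so that $14+11=25\ge15$. Every other maximal parabolic has $\dim\n$ large enough to give $\lceil\dim\n/2\rceil\ge r_\g$ directly: in type $D_n$ the next smallest values of $\dim\n$ are $\binom{n}{2}$ (omitting $\alpha_{n-1}$ or $\alpha_n$, giving $\m\cong\sl_n(\R)$, with $\lceil\binom{n}{2}/2\rceil\ge n$ for $n\ge5$, the case $n=4$ being equivalent via triality to that of $\alpha_1$ since $\sl_4(\R)\cong\s\o(3,3)$) and $4n-7$ (omitting $\alpha_2$, with $\lceil(4n-7)/2\rceil=2n-3\ge n$); in type $E_6$ the next value is $21$ (omitting $\alpha_2$, $\m\cong\sl_6(\R)$, $\lceil21/2\rceil=11$); and in type $E_7$ the next value is $33$. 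Finally, every non-maximal parabolic has $\dim\n$ at least as large as the corank-two parabolic with the largest Levi — a Levi of type $D_{n-2}$ in $D_n$ (so $\dim\n\ge4n-6$), of type $D_4$ in $E_6$ (so $\dim\n\ge24$), of type $D_5$ in $E_7$ (so $\dim\n\ge43$) — each of which already satisfies $\lceil\dim\n/2\rceil\ge r_\g$; hence non-maximal parabolics never occur on the exceptional list.

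The whole proof is therefore bookkeeping — counting positive roots of Levi subsystems and reading off $r_\g$ and $r_\m$ from Tables \ref{tableg2} and \ref{tableg3} — with no conceptual obstacle. The one point demanding genuine care is low rank: $\sl_3(\R)$, $\s\o(4,4)$ together with the triality identifying the parabolics omitting $\alpha_1,\alpha_3,\alpha_4$, and the sporadic isomorphisms $\s\o(3,3)\cong\sl_4(\R)$ and $\s\o(2,2)\cong\sl_2(\R)\oplus\sl_2(\R)$, where one must confirm that the small Levi arising there is correctly matched with one of the algebras listed in the statement and that the relevant inequality still holds.
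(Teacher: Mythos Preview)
Your proof is correct and follows exactly the approach the paper intends: the paper's entire proof reads ``This is routine to check,'' and you have carried out that routine verification in full, including the delicate low-rank cases. One minor quibble that does not affect the conclusion: for $D_4$ the largest corank-two Levi is $A_2$ rather than $D_2=A_1\times A_1$, so the minimal non-maximal $\dim\n$ is $9$ rather than $10$, but $\lceil 9/2\rceil=5\ge 4=r_\g$ still holds.
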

\begin{proof}
This is routine to check.
\end{proof}

\begin{lemp}\label{half44}
Proposition \ref{estimate0}  holds when $\g$ is isomorphic to $\sl_n(\R)$ ($n\geq 3$).
\end{lemp}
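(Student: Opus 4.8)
The plan is to deduce Proposition \ref{estimate0} in this case from the already-established Lemmas \ref{half}, \ref{half04} and \ref{lhalf6}, splitting according to whether $\p$ is a maximal parabolic subalgebra of $\g$.

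First I would recall the combinatorial picture for $\g = \sl_n(\R)$: up to conjugacy the proper parabolic subalgebras are indexed by compositions $(n_1,\ldots,n_k)$ of $n$ with $k\geq 2$; for such a $\p$ one has $\dim\n = \sum_{1\leq i<j\leq k} n_i n_j$ and $\m\cong\bigoplus_{i=1}^{k}\sl_{n_i}(\R)$, which contains no compact simple ideal, while Table \ref{tableg2} gives $r_\g = n-1$. Writing $N := \sharp\{\alpha\in\Delta(\h_\C,\n_\C)\,:\,\la\lambda,\alpha^\vee\ra>0\}$, Lemma \ref{half} gives $N\geq\lceil(\dim\n)/2\rceil$; since $r_{\m,\lambda|_{\c'_\C}}\geq 0$, it suffices to prove $N + r_{\m,\lambda|_{\c'_\C}}\geq n-1$.

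If $\p$ is not maximal (i.e. $k\geq 3$), Lemma \ref{lhalf6}(a) already yields $\lceil(\dim\n)/2\rceil\geq r_\g = n-1$, hence $N\geq n-1$ and we are done. If $\p$ is maximal, say of type $(p,q)$ with $p+q=n$ and $1\leq p\leq q$, then $\m\cong\sl_p(\R)\oplus\sl_q(\R)$ is nonzero (as $n\geq 3$) with no compact simple ideal, $\n$ is abelian, and $\n_\C\cong\C^{p}\otimes(\C^{q})^{*}$ is irreducible under $\l_\C$; since the dual parabolic $\check\p_\C$ is again a maximal parabolic of type $A$, $\check\n_\C$ is likewise irreducible under $\check\l_\C$. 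Thus Lemma \ref{half04} applies, giving two alternatives. In the first, $N = \dim\n = pq \geq p+q-1 = n-1$, because $(p-1)(q-1)\geq 0$. In the second, $r_{\m,\lambda|_{\c'_\C}}\geq r_\m$, and combining $N\geq\lceil(\dim\n)/2\rceil$ with the maximal-parabolic case of Lemma \ref{lhalf6}(a) (which asserts $\lceil(\dim\n)/2\rceil + r_\m\geq r_\g$) gives $N + r_{\m,\lambda|_{\c'_\C}}\geq\lceil(\dim\n)/2\rceil + r_\m\geq n-1$. This exhausts all cases.

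The only step that is not routine bookkeeping is verifying the hypotheses of Lemma \ref{half04} in the maximal case — specifically the irreducibility of $\check\n_\C$ under $\check\l_\C$ — which reduces to the standard fact that a maximal parabolic of type $A$ has abelian, Levi-irreducible unipotent radical, together with the self-duality of the root datum $A_{n-1}$. I expect this to be the main (and quite minor) obstacle; the rest is a direct combination of Lemmas \ref{half}, \ref{half04} and \ref{lhalf6}.
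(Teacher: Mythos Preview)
Your proposal is correct and follows essentially the same approach as the paper's proof: split into the non-maximal and maximal parabolic cases, handle the former by Lemma~\ref{half} together with the first assertion of Lemma~\ref{lhalf6}(a), and in the maximal case verify the hypotheses of Lemma~\ref{half04} and combine its dichotomy with Lemma~\ref{half} and the second assertion of Lemma~\ref{lhalf6}(a). Your explicit verification that $\dim\n = pq \geq n-1$ in the first alternative of Lemma~\ref{half04} is a welcome detail that the paper leaves implicit (it also follows from $\dim\n \geq r'_\g \geq r_\g$).
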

\begin{proof}
If $\p$ is not a maximal parabolic subalgebra, then the lemma follows form Lemma \ref{half} and the first assertion of part (a) of Lemma \ref{lhalf6}.
If $\p$ is a maximal parabolic subalgebra, then the representation of $\check \l_\C$ on $\check \n_\C$ is irreducible. The lemma then follows from Lemma  \ref{half}, Lemma \ref{half04}, and the second assertion of part (a) of Lemma \ref{lhalf6}.

\end{proof}

The same proof as  Lemma \ref{half44} shows that Proposition \ref{estimate0} also holds when $\g$ is isomorphic to a split simple real Lie algebra of type $D_n$ ($n\geq 4$), $E_6$ or $E_7$. This completes the proof of Proposition \ref{estimate0}.

\section{A proof of Theorem \ref{mainlocal}}

Let the notation be as in Section \ref{s1}.  To prove Theorem \ref{mainlocal}, we assume without loss of generality that $G$ is connected and semisimple. 
As in Theorem \ref{mainlocal}, $P$ is a proper parabolic subgroup of $G$, $N$ is its unipotent radical, and $L:=P/N$.

Shapiro's lemma implies that 
\[
  \oH_{\mathrm{ct}}^{i}(G;  F\otimes \Ind_P^G (\nu\otimes\sigma))=\oH_{\mathrm{ct}}^{i}(P;  F\otimes \rho_P \otimes \nu\otimes\sigma),  \quad (i\in \Z),
\]
where $\rho_P$ is the positive character of $P$ whose square equals the modular character of $P$.
We have a spectral sequence
\[
   E_2^{i, j}:=\oH_{\mathrm{ct}}^i(L; \oH^j(\n_\C, F)\otimes \rho_P\otimes (\nu\otimes \sigma))\Rightarrow \oH_{\mathrm{ct}}^{i+j}(P;  F\otimes \rho_P \otimes \nu\otimes\sigma).
   \]

Write $L_0$ for the  identity connected component of $L$. Note that $\oH^j(\n_\C; F)$ is completely reducible as a representation of $L_0$. 
Assume that 
\[
\oH_{\mathrm{ct}}^i(L; \oH^j(\n_\C; F)\otimes \rho_P\otimes (\nu\otimes \sigma))\neq 0,
\]
for some $i,j\in \Z$. Then 
\be\label{nonvhi}
\oH_{\mathrm{ct}}^i(L_0; F_0\otimes \rho_P\otimes (\nu\otimes \sigma_0))\neq 0,
\ee
for some unitarizable irreducible Casselman-Wallach representations $\sigma_0$ of  $L_0$, and some irreducible $L_0$-subrepresentation $F_0$ of $ \oH^j(\n_\C; F)$.

Fix a Cartan involution $\theta: G\rightarrow G$, and still denote its differential by $\theta : \g\rightarrow \g$. Identify $L$ with $P\cap \theta(P)$. Then its Lie algebra $\l$ has a decomposition
\[
  \l=\c\oplus \a\oplus \m
\]
as in \eqref{decoml}. 
Let 
\be\label{dh22}
  \h :=\c \oplus \a \oplus \c' \oplus \a' 
\ee
be a Cartan subalgebra of $\l$ which is obtained as in \eqref{dh}.

By Lemma \ref{lem54}, there exists  $\lambda\in \breve \h_\C$ such that 
\begin{itemize}
   \item $-\lambda$ is  a Harish-Chandra parameter of the infinitesimal character of the representation $F_0\otimes \rho_P$ of $L_0$;
  \item $\lambda|_{\a'_\C}=0$; and 
  \item $i\geq r_{\m , \lambda|_{\c'_\C}}$.
\end{itemize}
The first of the above three conditions in particular implies that
\[
  \lambda|_{\a_\C}=\nu|_{\a_\C}.
\] 
 By  Casselman-Osborne Theorem (see \cite[Theorem 4.149]{KV}), $-\lambda$ is also a Harish-Chandra parameter of the infinitesimal character of the representation $F$ of $G$. Hence $\lambda$ is regular integral. 
 
\begin{lemp}
The assumption that $F_0$ is an irreducible $L_0$-subrepresentation of $\oH^j(\n_\C; F)$  implies that
\[
  j=\sharp\{\alpha\in \Delta(\h_\C, \n_\C)\, : \, \la \lambda, \alpha^\vee\ra>0\}.
\]

\end{lemp}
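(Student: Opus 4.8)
The plan is to deduce the lemma from Kostant's theorem on the $\n_\C$-cohomology of a finite-dimensional representation, followed by an elementary bookkeeping inside the Weyl group $W:=W(\g_\C,\h_\C)$.

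Fix a positive system $\Delta^+(\h_\C,\g_\C)$ containing $\Delta(\h_\C,\n_\C)$, put $\Delta^+(\h_\C,\l_\C):=\Delta^+(\h_\C,\g_\C)\cap\Delta(\h_\C,\l_\C)$, and let $\rho$, $\rho_\l$, $\rho_\n$ be the half sums of the roots in $\Delta^+(\h_\C,\g_\C)$, $\Delta^+(\h_\C,\l_\C)$, $\Delta(\h_\C,\n_\C)$ respectively, so $\rho=\rho_\l+\rho_\n$. Let $\lambda_F\in\breve\h_\C$ be the highest weight of $F$ and let $W^1:=\{w\in W\,:\,w^{-1}\Delta^+(\h_\C,\l_\C)\subseteq\Delta^+(\h_\C,\g_\C)\}$ be the set of minimal length representatives of $W(\l_\C)\backslash W$. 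Kostant's theorem (see \cite{KV}) yields an $\l_\C$-module isomorphism
\[
  \oH^j(\n_\C;F)\cong\bigoplus_{w\in W^1,\ \ell(w)=j}F^\l_{w(\lambda_F+\rho)-\rho},
\]
where $F^\l_\mu$ denotes the irreducible $\l_\C$-module of $\Delta^+(\h_\C,\l_\C)$-highest weight $\mu$. Because $F_0$ is an irreducible submodule of $\oH^j(\n_\C;F)$ under the connected group $L_0$, it is an irreducible $\l_\C$-submodule, hence equal to one of the summands: $F_0\cong F^\l_{w(\lambda_F+\rho)-\rho}$ for a unique $w\in W^1$, and $j=\ell(w)$. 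So the lemma reduces to the identity $\ell(w)=\sharp\{\alpha\in\Delta(\h_\C,\n_\C)\,:\,\la\lambda,\alpha^\vee\ra>0\}$.

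Next I would express $\lambda$ through this $w$. The $\l_\C$-infinitesimal character of $F^\l_{w(\lambda_F+\rho)-\rho}$ has Harish-Chandra parameter $w(\lambda_F+\rho)-\rho+\rho_\l=w(\lambda_F+\rho)-\rho_\n$; twisting by $\rho_P$ shifts all $\h_\C$-weights, hence the Harish-Chandra parameter, by $\rho_\n$, so the $\l_\C$-infinitesimal character of $F_0\otimes\rho_P$ has Harish-Chandra parameter $w(\lambda_F+\rho)$. Since $-\lambda$ was chosen (via Lemma \ref{lem54}) to be a Harish-Chandra parameter of this infinitesimal character, there is $v\in W(\l_\C)$ with $-\lambda=v\,w(\lambda_F+\rho)$.

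Finally the count. Put $N(u):=\{\beta\in\Delta^+(\h_\C,\g_\C)\,:\,u^{-1}\beta\notin\Delta^+(\h_\C,\g_\C)\}$, so $|N(u)|=\ell(u)$. Since $\lambda_F+\rho$ is dominant and regular for $\g_\C$, for every $\alpha\in\Delta(\h_\C,\n_\C)$ we have $\la\lambda,\alpha^\vee\ra>0$ if and only if $\la\lambda_F+\rho,(vw)^{-1}(\alpha^\vee)\ra<0$, equivalently $(vw)^{-1}\alpha\notin\Delta^+(\h_\C,\g_\C)$; hence $\{\alpha\in\Delta(\h_\C,\n_\C)\,:\,\la\lambda,\alpha^\vee\ra>0\}=N(vw)\cap\Delta(\h_\C,\n_\C)$. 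Because $W(\l_\C)$ preserves $\Delta(\h_\C,\n_\C)$ as a set, substituting $\alpha\mapsto v^{-1}\alpha$ gives $|N(vw)\cap\Delta(\h_\C,\n_\C)|=|N(w)\cap\Delta(\h_\C,\n_\C)|$, and since $w\in W^1$ keeps the roots of $\Delta^+(\h_\C,\l_\C)$ positive we get $N(w)\subseteq\Delta(\h_\C,\n_\C)$, so the count equals $|N(w)|=\ell(w)=j$. I expect the whole argument to be routine; the only points requiring attention are matching Harish-Chandra parameter conventions so that the twist by $\rho_P$ exactly cancels $\rho_\n$, and noting that the final count does not depend on the representative chosen in the coset $W(\l_\C)w$.
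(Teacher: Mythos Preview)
Your argument is correct and follows exactly the route the paper indicates: the paper's proof is the single sentence ``This follows from Kostant's Theorem (see \cite[Theorem 4.139]{KV}),'' and you have simply unpacked the Weyl-group bookkeeping (the identification of $-\lambda$ with $v\,w(\lambda_F+\rho)$ and the count $|N(vw)\cap\Delta(\h_\C,\n_\C)|=\ell(w)$) that this citation leaves implicit. Nothing needs to change.
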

\begin{proof}
This follows from Kostant's Theorem (see \cite[Theorem 4.139]{KV}). 
\end{proof} 
 
 Now  Proposition \ref{estimate0} implies that
 \[
   i+j\geq \sharp\{\alpha\in \Delta(\h_\C, \n_\C)\, : \, \la \lambda, \alpha^\vee\ra>0\}+r_{\m , \lambda|_{\t'_\C}}\geq r_{\g}. 
 \]
 This proves Theorem \ref{mainlocal}.

\section*{Acknowledgements}

Jian-Shu Li's research was partially supported by RGC-GRF grant 16303314 of HKSAR.

Binyong  Sun was supported in part by the National Natural Science Foundation of China (No.  11525105, 11688101, 11621061 and 11531008).

\end{document}